\newtheorem{theorem}{Theorem}[section]
\newtheorem{lemma}[theorem]{Lemma}
\newtheorem{corollary}[theorem]{Corollary}
\newtheorem{claim}[theorem]{Claim}
\newtheorem{proposition}[theorem]{Proposition}
\newtheorem{definition}[theorem]{Definition}
\newtheorem{example}[theorem]{Example}
\newtheorem{remark}[theorem]{Remark}
\newcommand{\field}[1]{\mathbb{#1}}
\newcommand{\C}{\field{C}}
\newcommand{\K}{\field{K}}
\newcommand{\N}{\field{N}}
\newcommand{\R}{\field{R}}
\newcommand{\wh}[1]{\widehat{#1}}
\newcommand{\wt}[1]{\widetilde{#1}}
\newcommand{\aph}{{\alpha}}
\newcommand{\bun}{{\bf 1}}
\newcommand{\ba}{{\bf a}}
\newcommand{\bA}{{\bf A}}
\newcommand{\bb}{{\bf b}}
\newcommand{\bbl}{{\bf bl}}
\newcommand{\bbo}{{\bf 0}}
\newcommand{\bB}{{\bf B}}
\newcommand{\bc}{{\bf c}}
\newcommand{\bd}{{\bf d}}
\newcommand{\be}{{\bf e}}
\newcommand{\bbf}{{\bf f}}
\newcommand{\bg}{{\bf g}}
\newcommand{\bG}{{\bf G}}
\newcommand{\bh}{{\bf h}}
\newcommand{\bH}{{\bf H}}
\newcommand{\bm}{{\bf m}}
\newcommand{\bM}{{\bf M}}
\newcommand{\bMbar}{\overline{\bf M}}
\newcommand{\bP}{{\bf P}}
\newcommand{\bs}{{\bf s}}
\newcommand{\bS}{{\bf S}}
\newcommand{\bu}{{\bf u}}
\newcommand{\bx}{{\bf x}}
\newcommand{\by}{{\bf y}}
\newcommand{\bz}{{\bf z}}
\newcommand{\cD}{\mathcal{D}}
\newcommand{\cL}{\mathcal{L}}
\newcommand{\Cn}{{\C^n}}
\newcommand{\cO}{\mathcal{O}}
\newcommand{\cU}{\mathcal{U}}
\newcommand{\cV}{\mathcal{V}}
\newcommand{\cX}{\mathcal{X}}
\newcommand{\clos}{{\rm clos}}
\newcommand{\crit}{{\rm crit}}
\newcommand{\dd}{{\partial}}
\newcommand{\dist}{{\rm dist}}
\newcommand{\dlt}{{\delta}}
\newcommand{\ff}{{\rm ff}}
\newcommand{\gR}{{\geq R}}
\newcommand{\gm}{{\gamma}}
\newcommand{\ini}{{\rm in}}
\newcommand{\inn}{{{\rm in}}}
\newcommand{\Kn}{{\K^n}}
\newcommand{\Kp}{{\K^p}}
\newcommand{\lbd}{{\lambda}}
\newcommand{\lgth}{{\rm length}}
\newcommand{\lR}{{\leq R}}
\newcommand{\mfrk}{\mathfrak{m}}
\newcommand{\nb}{{\nabla}}
\newcommand{\omg}{\omega}
\newcommand{\Omg}{\Omega}
\newcommand{\ovS}{{\overline{S}}}
\newcommand{\ovX}{{\overline{X}}}
\newcommand{\rd}{{\rm d}}
\newcommand{\Rgo}{{\R_{\geq 0}}}
\newcommand{\Rn}{{\R^n}}
\newcommand{\Rnbar}{\overline{\R^n}}
\newcommand{\sing}{{\rm sing}}
\newcommand{\sgm}{\sigma}
\newcommand{\Sgm}{\Sigma}
\newcommand{\Ufrk}{\mathfrak{U}}
\newcommand{\ups}{{\upsilon}}
\newcommand{\Zbar}{\overline{Z}}
\newcommand{\zt}{{\zeta}}
\numberwithin{equation}{section}
\numberwithin{equation}{section}
\begin{document}
\title[conic singular sub-manifolds and algebraic sets]{Global Lipschitz geometry 
of conic singular sub-manifolds with applications to algebraic sets.}

\author[A. Costa]{Andr\'e Costa}
\thanks{The second named author is very grateful to Daniel Cibotaru for 
motivating discussions and comments.}
%
\author[V. Grandjean]{Vincent Grandjean}

\author[M. Michalska]{Maria Michalska}
\address{
A. Costa, 
Centro de Ci\^encias e tecnologias, 
Universidade Estadual do Cear\'a, 
Campus do Itaperi, 60.714-903 Fortaleza - CE, Brasil}
\address{V. Grandjean, 
Departamento de Matem\'atica, Departamento de Matem\'atica,
Universidade Federal de Santa Catarina, 
88.040-900 Florianópolis - SC, Brasil,
Brasil}
\address{M. Michalska, Wydzia\l{} Matematyki i Informatyki, Uniwersytet 
\L{}\'o{}dzki, Banacha 22, 90-238 \L{}\'o{}d\'z{}, Poland}

\email{
andrecosta.math@gmail.com - 
vincent.grandjean@ufsc.br
 - maria.michalska@wmii.uni.lodz.pl
}

\subjclass[2000]{}




\begin{abstract}
The main result states that a connected conic singular sub-manifold 
of a Riemannian manifold, compact when the ambient manifold is non-Euclidean, 
is Lipschitz Normally Embedded: the outer and inner metric space structures are 
metrically equivalent. We also show that a closed subset of $\Rn$
is a
conic singular sub-manifold if and only if its closure in the one point 
compactification $\bS^n =\Rn\cup \infty$ is a conic singular
sub-manifold. Consequently the connected components of generic
affine real and complex algebraic sets are conic at infinity, thus are 
Lipschitz Normally Embedded. 
\end{abstract}

\maketitle
\setcounter{tocdepth}{1}
\tableofcontents


\section{Introduction}
$ $

A subset $S$ of a smooth Riemannian manifold $M$ is called \em quasi-convex, Whitney 1-regular 
\em or \em Lipschitz normally embedded \em (abbreviated to LNE) if there exists a constant $L$ such that
\begin{equation}\label{eq:prop-LNE}
d_\inn^S \; \leq L \; \cdot d_S
\end{equation}
where $(S,d_S)$ is the outer metric structure, i.e. the distance in $S$
is taken in the ambient space $M$, and $(S,d_\inn^S)$ is the inner metric structure, where the
distance between two points of $S$ is taken as the infimum of the length of the 
rectifiable curves lying in $S$ connecting the two points. This notion was well-established first by Whitney in \cite{Whi1,Whi2}, thereafter was studied in \cite{Sta,Kur1} for sub-analytic sets; subsequently was re-introduced under the name of quasi-convex sets in the investigation of length spaces, see \cite{Gro1}. The least ambivalent name of Lipschitz normally embedded sets, therefore the one we will use, was introduced by \cite{BiMo} in the semi-algebraic context.

The present paper continues to apply geometric results on LNE 
subsets of Riemannian manifolds to global Lipschitz geometry of algebraic 
sets, which started in the PhD thesis of the first author \cite{Cos} and in 
our joint articles \cite{CoGrMi1,CoGrMi2}. A classical type of singularities 
in a Riemannian setting are the metrically conically singular ones, 
intensively studied since the seminal works \cite{Che1,Che2,ChTa1,ChTa2}. In 
this paper we research their local and global LNE nature. 
Investigating Lipschitz properties in a unbounded metric space would usually 
involve studying the behaviour along the geodesic rays
\cite{Gro2,Gro1}, whereas the methods of our paper \cite{CoGrMi2} allow us to reduce 
the study of the global LNE property of conic singular sub-manifolds of $\Rn$ to their 
 one-point compactification in $\bS^n$.
On another hand, there is a growing body of work on LNE algebraic and analytic subset germs 
and one might consult \cite{FaPi,Pic,MeSa} to overview the state of the art 
at this time. The local LNE problem at infinity 
was initiated in \cite{FeSa} for complex algebraic sets. 
Besides the obvious examples of global LNE sets that are the 
compact connected sub-manifolds and $\K$-cones over these, prior to 
\cite{Cos} and \cite{CoGrMi1} completely characterizing LNE complex algebraic curves of $\Cn$, 
only a single paper  \cite{KePeRu} presented 
non-trivial examples of globally LNE algebraics sets of $\Kn$. A complete bi-Lipschitz invariant of the outer metric space structure of complex affine plane curves is built in \cite{Tar}, yet does not describe explicitly the LNE ones.

\medskip
This paper first establishes results for conic singular sub-manifolds, i.e. 
subsets of a manifold $M$ with (metrically) conic singular points, see page
\pageref{def:conic-point}. 
  
\medskip\noindent
{\bf Theorem \ref{thm:main-compact}.}  \em
A compact connected conic singular sub-manifold of a Riemannian manifold is LNE.
\em
 
\medskip
After introducing the notion of conic at infinity in Section 4, 
we also obtain the following

\medskip\noindent
{\bf Theorem \ref{thm:main-Rn}.}
\em A connected conic singular sub-manifold of $\Rn$ is LNE. \em

\medskip
Note that Theorems \ref{thm:main-compact} and 
\ref{thm:main-Rn} below would still hold true 
if we were to work with $C^2$ conic singular sub-manifolds instead of smooth 
ones,  after the necessary adjustments.

\medskip
Our next step is to apply these results to study LNE property 
among algebraic subsets of $\Kn$, where $\K=\C$ or $\R$. We prove

\medskip\noindent
{\bf Theorem.} \em
Generic algebraic sets of $\Kn$ are conic singular sub-manifolds of $\Kn$
(Lemma \ref{lem:transverse}, Proposition \ref{prop:generic-cut-LNE-algebraic},
Proposition \ref{prop:CI-gen-LNE}). \em

\medskip
Straightforward consequences of the latter result are 

\medskip\noindent
{\bf Corollaries.} \em
1) Connected components of generic algebraic sets of $\Kn$ are LNE.
\\
2) Connected components of regular fibres of generic polynomial 
mappings $\Kn \to \Kp$ are LNE with $p\leq n$.
\\
4) Generic complex ICIS germs are LNE at their singular point.
\em

\medskip
The paper is organised as follows. Section \ref{section:LNE} presents 
notations, definitions, and basic properties.
Section \ref{section:CSP} introduces the notion of conic 
singular point and presents the key collar neighbourhood Lemma 
\ref{lem:collar-origin}. The main result, Theorem \ref{thm:main-compact} 
established in Section \ref{section:MRCC} is mostly a consequence of 
Claim \ref{claim:graph-origin} obtained from the collar neighbourhood lemma. 
Section \ref{section:CL} deals with the Euclidean situation to obtain Theorem 
\ref{thm:main-Rn} and Proposition \ref{prop:conic-spheric}.
Section \ref{section:PPoV} deals with general affine traces of projective algebraic sets, 
while Section \ref{section:APoV} presents two genericity LNE results about affine algebraic sets. Coincidentally,
the material developed so far adapts almost readily to yield Proposition 
\ref{prop:ICIS-germ-LNE}, a local LNE result for generic complex ICIS 
germs. Last, Appendix \ref{section:appendix} recalls, with proofs, 
some genericity results about polynomial mappings with bounded degrees,
used in Section \ref{section:APoV} and Section 
\ref{section:SAtICISG}. 
%
%
%
%
%
%
%
%
%
%
%
%
%
%
%
%
%
%
%
%
%
%
%
%
%
%
%
%
%
%
%
%
%
%
%
%
%
%
%
%
%
%
%
%
%

\section{Preliminaries}\label{section:LNE}
Throughout the paper smooth means $C^\infty$.
\subsection{Miscellaneous}
$ $ 

The Euclidean space $\Rn$ is equipped with the Euclidean distance, denoted
$|-|$. We denote by $B_r^n$ the open ball of $\Rn$ of radius $r$ and centred 
at the origin $\bbo$, by $\bB_r^n$ its closure and by $\bS_r^{n-1}$ its 
boundary. The open ball of radius $r$ and centre $\bx_0$ is $B^n(\bx_0,r)$, 
its closure is $\bB^n(\bx_0,r)$ and $\bS^{n-1}(\bx_0,r)$ is its boundary.
The unit sphere of $\Rn$ centred at the origin is $\bS^{n-1}$.

We recall that the half-plane $[0,\infty) \times \R^{n-1}$ inherits the 
topology of $\Rn$. Let $U$ be an open subset $[0,\infty) \times \R^{n-1}$.
A smooth function $f:U\to \R$ is the restriction to $U$ of a smooth function 
$F:\cU\to\R$ where $\cU $ is an open subset of $\Rn$ containing $U$.

\smallskip
A topological manifold 
with boundary $(M,\dd M)$ is  a second countable Hausdorff space that admits an atlas of charts
$(U,\phi)$ of 
$(M,\dd M)$ with domain (the open subset) $U$ such that it is a homeomorphism $\phi:U 
\to V$ into the open subset $V$ of $[0,\infty) \times \R^{n-1}$.
For the manifold to be smooth we assume that for any pair of charts $(U,\phi)$ and $(U',\phi')$ both 
transition mappings $\phi\circ(\phi')^{-1}$ and $\phi'\circ\phi^{-1}$ are 
smooth, with the convention that a map defined over the empty set is smooth.

Let $X$ be a subset of $\Rn$ containing $\ba$. Let $?$ be any element of $\{<,\leq,>,\geq\}$. We denote
\begin{equation}\label{eq:Xar}
X(\ba)_r := X\cap \bS^{n-1}(\ba,r), \;\; {\rm and} \;\; 
X(\ba)_{? \, r} := X \cap \{|\bx-\ba| \, ? \, r\}, \;\;
\end{equation}

\subsection{Lipschitz normally embedded sets}
$ $

Let $(M,g_M)$ be a smooth Riemannian manifold. The metric 
tensor $g_M$ induces the 
distance $d_M$ on $M$ defined as the infimum of the lengths of rectifiable 
curves connecting any given pair of points. 
Thus, any subset $S$ of $M$ admits two natural metric space structures 
inherited from $(M,g_M)$:
\begin{definition}\label{def:outer-inner}
	1) The \em outer metric space structure $(S,d_S)$, \em is equipped with 
	the \em outer distance \em function $d_S$, 
	restriction of $d_M$ to $S\times S$. 
	
	\smallskip\noindent
	2) The \em inner metric space structure $(S,d_\inn^S)$, \em is equipped with
	the \em inner distance \em function 
	$$
	d_\inn^S: S\times S \to [0,+\infty]
	$$
	defined as follows: given $\bx,\bx'\in S$, the number $d_\inn^S(\bx,\bx')$ is the infimum of the lengths
	of the rectifiable paths lying in $S$ joining $\bx$ and $\bx'$.   
\end{definition}
Observe that $d_S \leq d_\inn^S$. 

\begin{definition}\label{def:LNE}
	Let $(M,g_M)$ be a smooth Riemannian manifold. 
	
	\smallskip
	i) A subset $S$ of $(M,g_M)$ is \em Lipschitz normally 
	embedded \em (shortened to LNE) if 
	there exists a positive constant $L$ such 
	that
	$$
	\bx,\bx' \; \in \; S \; \Longrightarrow \; d_\inn^S(\bx,\bx') \; \leq \;
	L \cdot d_S(\bx,\bx').
	$$
	Any constant $L$ satisfying the previous inequality is called a \em LNE 
	constant of $S$. \em 
	
	\smallskip
	(ii) The subset $S$ of $(M,g_M)$ is \em locally LNE at $\bx_0$ \em if 
	there exists a neighbourhood $U$ of $\bx_0$ in $M$ such that
	$S\cap U$ is LNE.
	
	\smallskip
	(iii) The subset $S$ of $(M,g_M)$  is \em locally LNE \em if it is locally LNE 
	at each of its points.
\end{definition}
\begin{remark}
	If the manifold $M$ is compact, the property of being LNE
	depends only on the $C^1$ structure of $M$.
\end{remark}
\noindent
We will simply use the expression LNE, but it is always understood 
that it is with respect to the metric space structure given by the Riemannian 
metric $g_M$ on $M$. 
The next result characterizes completely compact LNE subsets
(see \cite[Proposition 2.1]{BiMo} without proof, \cite[Proposition 2.4]{KePeRu} and \cite[Lemma 2.6]{CoGrMi1}).
\begin{proposition}\label{lem:compact-LNE}
	A connected compact subset of the smooth Riemannian manifold $(M,g_M)$ is 
	LNE if and only if it is locally LNE at each of its points. In particular 
	any connected compact $C^1$ sub-manifold, possibly with boundary, is LNE.
\end{proposition}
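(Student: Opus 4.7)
The forward implication is immediate: $M$ itself is a neighbourhood of every $\bx_0 \in S$, and $S \cap M = S$ is LNE by hypothesis. Likewise, once the main content is established, the concluding sentence follows since near any of its points a $C^1$ sub-manifold (possibly with boundary) admits a $C^1$ chart of $M$ making it locally bi-Lipschitz equivalent to an open piece of $\R^k$ or of $\R^{k-1}\times[0,\infty)$; on such a piece a small convex (half-)ball has inner distance equal to outer distance, so pulling back yields local LNE at every point.

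The substance is the converse. The plan is the standard two-scale argument built around a Lebesgue number. Using local LNE at each point and compactness, extract a finite open cover $U_1,\ldots,U_k$ of $S$ in $M$ for which each $S\cap U_i$ is LNE with constant $L_i$, and set $L_0:=\max_i L_i$. The Lebesgue number lemma applied to the open cover $\{U_i\cap S\}$ of the compact space $(S,d_S)$ provides $\delta>0$ such that every subset of $S$ of $d_S$-diameter less than $\delta$ sits in some $S\cap U_i$. For pairs $\bx,\bx' \in S$ with $d_S(\bx,\bx')<\delta$, both lie in a common $S\cap U_i$ and
$$d_\inn^S(\bx,\bx') \; \leq \; d_\inn^{S\cap U_i}(\bx,\bx') \; \leq \; L_i\, d_S(\bx,\bx') \; \leq \; L_0\, d_S(\bx,\bx').$$

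The remaining pairs $\bx,\bx'$ with $d_S(\bx,\bx')\geq\delta$ are controlled by bounding $D:=\sup_{S\times S}d_\inn^S$ uniformly: for then $d_\inn^S(\bx,\bx') \leq D \leq (D/\delta)\,d_S(\bx,\bx')$, and $L:=\max(L_0,\, D/\delta)$ is the desired LNE constant. I expect the main care to be required to see that $D<\infty$, i.e.\ that $d_\inn^S$ is even finite-valued. Fixing $\bx\in S$, the set $C(\bx):=\{\bx'\in S : d_\inn^S(\bx,\bx')<\infty\}$ is open in $S$, because any $\bx'\in C(\bx)$ lies in some $S\cap U_i$ on which local LNE furnishes finite inner distance from $\bx'$ to every other point of $S\cap U_i$; the triangle inequality then yields $S\cap U_i \subset C(\bx)$. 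The identical argument shows $S\setminus C(\bx)$ is open, so connectedness of $S$ together with $\bx\in C(\bx)$ forces $C(\bx)=S$. Finally, choosing $\bx_i\in S\cap U_i$ for each $i$, the finite quantity $\max_{i,j}d_\inn^S(\bx_i,\bx_j)$ combined with $d_\inn^{S\cap U_i}(\bx,\bx_i)\leq L_0\operatorname{diam}(S)$ for $\bx\in S\cap U_i$ and two applications of the triangle inequality produces the sought uniform bound on $D$, completing the argument.
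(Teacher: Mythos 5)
Your argument is correct, and it is the standard one for this kind of ``local to global'' LNE statement. Note that the paper itself does not prove Proposition~\ref{lem:compact-LNE}; it merely cites \cite{BiMo}, \cite{KePeRu} and \cite{CoGrMi1}, so there is no in-text proof to compare against, but what you have written is the expected argument in those references.

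A few remarks on the structure of your write-up. The two-scale decomposition (close pairs handled by a Lebesgue number $\delta$ for the finite subcover, far pairs handled by a bound on the inner diameter $D$) is exactly right, and the chain $d_\inn^S \leq d_\inn^{S\cap U_i} \leq L_i\, d_S$ is the correct way to pass from local to global on the small scale. You were right to flag the finiteness of $D$ as the point that actually uses connectedness: without it, $S$ could be a disjoint union of two LNE pieces, every point would be locally LNE, yet $d_\inn^S$ would be $+\infty$ on mixed pairs. Your clopen argument for $C(\bx)=\{\bx' : d_\inn^S(\bx,\bx')<\infty\}$ is clean, and the final bound $D \leq 2L_0\,\mathrm{diam}(S)+\max_{i,j} d_\inn^S(\bx_i,\bx_j)$ closes the loop. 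The forward direction and the $C^1$ sub-manifold corollary are also handled correctly; the only thing worth making slightly more explicit in the latter is that a $C^1$ chart restricted to a relatively compact sub-neighbourhood is bi-Lipschitz (continuity of the differential plus compactness), and that the LNE property is invariant under bi-Lipschitz maps of the ambient pair, but these are routine.
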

%
%

\subsection{Non-negative cones}

$  $

Let $\Rgo := [0,\infty)$. The half-line in the oriented direction (of the 
vector) $\bu\in\Rn\setminus \bbo$ is 
$$
\Rgo \bu := \{t\bu : t \in \Rgo\}.
$$
The non-negative cone over the subset $S$ of $\Rn$ with vertex $\bx_0$ is 
defined as 
$$
\wh{S}^+ := \bx_0 + \cup_{\bx\in S\setminus\bx_0}\,\Rgo(\bx-\bx_0).
$$ 

A special case of interest is the following result (see \cite[Proposition 
2.8]{KePeRu})
\begin{lemma}\label{lem:cone-LNE}
	A non-negative cone over an LNE subset of the unit sphere  $\bS^{n-1}$ is LNE. 	
	
\end{lemma}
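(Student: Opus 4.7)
The plan is to pick two points $\bx, \by \in \wh{S}^+$ (assuming vertex at the origin, the general case following by translation), write them as $\bx = t\bs$ and $\by = t'\bs'$ with $t,t' \geq 0$ and $\bs,\bs' \in S$, and construct a piecewise rectifiable path in $\wh{S}^+$ joining them whose length is bounded by a uniform multiple of the ambient distance $|t\bs - t'\bs'|$.

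Without loss of generality I would take $t \leq t'$ and use the two-leg path: first a radially scaled copy of a near-geodesic arc of $S$ from $t\bs$ to $t\bs'$, then the straight line segment along the ray $\Rgo\bs'$ from $t\bs'$ to $t'\bs'$. Since $S$ is LNE on $\bS^{n-1}$ with some constant $L$, and since scaling by $t$ multiplies lengths by $t$, the first leg has length at most $tL\,d_S(\bs,\bs')$, while the second leg has length $t'-t$. Thus
\begin{equation*}
d_\inn^{\wh{S}^+}(\bx,\by) \;\leq\; t L |\bs - \bs'| + (t'-t).
\end{equation*}

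The main computation is the elementary identity
\begin{equation*}
|t\bs - t'\bs'|^2 \;=\; (t-t')^2 + tt'\,|\bs - \bs'|^2,
\end{equation*}
which holds because $\bs,\bs'$ are unit vectors. From it I read off the two lower bounds $|t\bs - t'\bs'| \geq |t'-t|$ and, using $t \leq t'$ hence $\sqrt{tt'}\geq t$, also $|t\bs - t'\bs'| \geq t|\bs-\bs'|$. Substituting these into the previous estimate yields
\begin{equation*}
d_\inn^{\wh{S}^+}(\bx,\by) \;\leq\; (L+1)\,|t\bs - t'\bs'| \;=\; (L+1)\,d_{\wh{S}^+}(\bx,\by),
\end{equation*}
so $L+1$ is an LNE constant for $\wh{S}^+$.

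I do not anticipate any real obstacle: the only minor points are handling the degenerate case $t=0$ or $t'=0$ (where the path is simply a ray segment and the inequality is immediate), and noting that although $d_S$ is the outer distance in $\bS^{n-1}$ rather than the chordal distance in $\Rn$, the LNE hypothesis on $S$ is stated with respect to the Riemannian metric on the sphere so $d_S(\bs,\bs')$ already coincides with the spherical ambient distance and $d_\inn^S(\bs,\bs') \leq L\,d_S(\bs,\bs')$; upon radial scaling this is what controls the first leg. The argument extends verbatim to a cone with arbitrary vertex $\bx_0$ by translating by $-\bx_0$.
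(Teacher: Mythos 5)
Your proposal is correct and follows essentially the same two-leg path as the paper's proof (a radially scaled near-geodesic arc of $S$, then a straight radial segment), with the paper's law-of-cosines step neatly packaged as the identity $|t\bs - t'\bs'|^2 = (t-t')^2 + tt'\,|\bs - \bs'|^2$. One small caution on your closing remark: the paper in fact treats $S$ as an LNE subset of the ambient $\Rn$, so $d_S$ is the chordal distance $|\bs-\bs'|$ and your displayed bound $tL|\bs-\bs'|+(t'-t)$ is exactly what the hypothesis gives; if one instead read $d_S$ as the intrinsic spherical (arc-length) distance, an extra arc-to-chord factor of $\pi/2$ would be needed, harmlessly inflating the constant but not affecting the conclusion.
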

\begin{proof}
	Let  $S$ be a closed connected subset of $\Rn$ contained in $\bS^{n-1}$ and $L$ be a LNE constant of $S$.
	
	Let $\bx,\bx'$ points of $\wh{S}^+$. We can assume that the segment 
	$[\bx,\bx']$ is not contained in $\wh{S}^+$. Therefore there exist
	$x,x'>0$ and $\bu,\bu' \in \bS^{n-1}$ such that
	$$
	\bx = x \bu \;\; {\rm and} \;\;  \bx' = x' \bu'.
	$$
	Thus $\bu$ and $\bu'$ are different. Let $\gm$ be any 
	rectifiable path of $S$ connecting $\bu$ and $\bu'$ and such that
	$$
	\lgth(\gm) \leq 2L\cdot |\bu-\bu'|.
	$$
	If $x =x'$, let $x\gm$ be the path in $\wh{S}^+$ defined as 
	$t \mapsto x\cdot \gm(t)$. Thus 
	$$
	\lgth(x\gm) \leq 2 L \cdot |\bx - \bx'|.
	$$
	Assume $x' > x$. 
	Let us write $d_\inn$ for $d_\inn^{\wh{S}^+}$. We deduce that
	$$
	d_\inn (\bx,\bx') \leq d_\inn(\bx,x\bu') + d_\inn (x\bu',\bx') \leq
	\lgth(x\gm) + (x'-x) \leq (2L + 1) |\bx - \bx'|.
	$$
\end{proof}
%
%
%
%
As an application of the law of cosines and of Lemma \ref{lem:cone-LNE}, the following 
hold true 
\begin{corollary}\label{cor:union-cone-LNE}
	Let $S$ be a closed subset of $\bS^{n-1}$ such that each ot its 
	connected component  $S_1, \ldots,S_c,$ is LNE. 
    Then the non-negative cone $\cup_{i=1}^c \wh{S_i}^+$ is LNE.
\end{corollary}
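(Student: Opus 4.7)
The plan is to reduce the statement to two cases, depending on whether the two given points of $\cup_i \wh{S_i}^+$ lie in the same cone or in different cones.

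First, since $S$ is closed in the compact sphere $\bS^{n-1}$ and has finitely many connected components $S_1,\ldots,S_c$, each $S_i$ is compact, the components are pairwise disjoint, and they are therefore separated by a positive distance
\[
\dlt \; := \; \min_{i\neq j} \, \dist(S_i,S_j) \; > \; 0.
\]
By Lemma \ref{lem:cone-LNE}, each cone $\wh{S_i}^+$ is LNE with some constant $L_i$; I will show that the whole union is LNE with a constant depending only on the $L_i$ and on $\dlt$. Given any $\bx, \bx' \in \cup_i \wh{S_i}^+$, write $\bx = x\bu$, $\bx' = x'\bu'$ with $x,x'\geq 0$ and (if nonzero) $\bu\in S_i$, $\bu'\in S_j$. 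If $i=j$, Lemma \ref{lem:cone-LNE} gives the estimate directly. If $i\neq j$ (the only case of interest), observe that any path in $\cup_k \wh{S_k}^+$ from $\bx$ to $\bx'$ must visit the vertex $\bbo$, so
\[
d_\inn^{\cup_k \wh{S_k}^+}(\bx,\bx') \; \leq \; x + x'.
\]

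The heart of the argument is then the reverse bound on $x + x'$ in terms of the outer distance $|\bx-\bx'|$. Since $\bu\in S_i$ and $\bu'\in S_j$ with $i\neq j$, we have $|\bu-\bu'| \geq \dlt$, equivalently $\langle \bu,\bu'\rangle \leq 1 - \dlt^2/2$. Expanding with the law of cosines,
\[
|\bx-\bx'|^2 \; = \; x^2 + x'^2 - 2xx'\langle\bu,\bu'\rangle \; \geq \; (x-x')^2 + \dlt^2 \, x x'.
\]
Combining with $(x+x')^2 = (x-x')^2 + 4xx'$, and since $\dlt \leq 2$, one gets
\[
(x+x')^2 \; \leq \; \frac{4}{\dlt^2}\bigl((x-x')^2 + \dlt^2 xx'\bigr) \; \leq \; \frac{4}{\dlt^2}\, |\bx-\bx'|^2,
\]
so that $x+x' \leq \frac{2}{\dlt}\,|\bx-\bx'|$.

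Putting the two cases together yields the LNE bound with constant $\max\bigl(\max_i(2L_i+1), \, 2/\dlt\bigr)$, proving the corollary. I do not anticipate a serious obstacle: the finiteness of the number of components, guaranteeing $\dlt>0$, is the only nontrivial input, and the rest is the law of cosines computation indicated above.
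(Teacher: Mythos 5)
Your argument is correct and is precisely the one the paper has in mind: the paper presents the corollary as ``an application of the law of cosines and of Lemma~\ref{lem:cone-LNE}'' without writing out the details, and you have filled in exactly those details (same-component case via Lemma~\ref{lem:cone-LNE}, cross-component case by forcing the path through the vertex and bounding $x+x'$ by $\frac{2}{\dlt}|\bx-\bx'|$ via the law of cosines). The computation is sound; you could also remark that your bound $|\bx-\bx'|\geq \frac{\dlt}{2}(x+x')$ is the correctly normalized version of the inequality the paper later invokes in the proof of Claim~\ref{claim:graph-origin}.
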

%
%
%
%
%
\begin{corollary}\label{cor:cone-LNE}
	1) Let $S$ be a closed connected subset of $\Rn$ contained in $\bS^{n-1}$.
	If $S$ is LNE, then the following subsets are also LNE
	$$
	\wh{S}^+ \cap B_R^n, \;\; \wh{S}^+ \cap \bB_R^n, \;\; 
	\wh{S}^+ \setminus B_R^n, \;\; \wh{S}^+ \setminus \bB_R^n, \;\;  
	{\rm and} \;\; \wh{S}^+ \cap \bS_R^{n-1}, 
	$$ 
	for any positive radius $R$.
	
	\medskip\noindent
	2) Let $S$ be a closed subset of $\bS^{n-1}$ such that the non-negative cone 
	$\wh{S}^+$ is LNE. Then for each positive radius $R$ the truncated cones 
	$\wh{S}^+\cap \bB_R^n$ and $\wh{S}^+\cap B_R^n$ are LNE. 
\end{corollary}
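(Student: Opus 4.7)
The plan is to treat part 1) by a direct path construction refining the one used in the proof of Lemma \ref{lem:cone-LNE}, and part 2) by a retraction argument using the nearest-point projection onto a ball.

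For part 1), I would write $\bx = x\bu$ and $\bx' = x'\bu'$ with $\bu,\bu' \in S$, and assume $x \leq x'$. The key observation is that the rectifiable path constructed in the proof of Lemma \ref{lem:cone-LNE} — the scaled sphere path $x\gm$ joining $x\bu$ to $x\bu'$, concatenated with the radial segment from $x\bu'$ to $x'\bu'$ — has all of its points at Euclidean distance from the origin lying in the interval $[x,x']$. Consequently, when $\bx,\bx' \in \wh{S}^+\cap\bB_R^n$ (resp. $\cap B_R^n$) one has $x' \leq R$ (resp. $x' < R$), so the path remains inside the truncation; and when $\bx,\bx' \in \wh{S}^+ \setminus B_R^n$ (resp. $\setminus \bB_R^n$) one has $x \geq R$ (resp. $x > R$), so again the path stays in the desired subset. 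The length estimate $(2L+1)|\bx-\bx'|$ from Lemma \ref{lem:cone-LNE} applies verbatim in all four cases. Finally, the cross-section $\wh{S}^+ \cap \bS_R^{n-1}$ coincides with $R\cdot S$, a dilate of $S$, and is therefore LNE with the same constant as $S$.

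For part 2), I would introduce the nearest-point projection $\pi_R : \Rn \to \bB_R^n$ onto the closed ball of radius $R$, acting as the identity on $\bB_R^n$ and as $\by \mapsto R\by/|\by|$ outside. As the nearest-point projection onto a convex set, it is $1$-Lipschitz on $\Rn$, and since it preserves each non-negative half-line $\Rgo \bu$, one has $\pi_R(\wh{S}^+) \subset \wh{S}^+ \cap \bB_R^n$. Given any $\bx,\bx' \in \wh{S}^+ \cap \bB_R^n$ and any rectifiable path $\gm$ in $\wh{S}^+$ from $\bx$ to $\bx'$ of length at most $L|\bx-\bx'|$, the image $\pi_R \circ \gm$ is a rectifiable path of length at most $L|\bx-\bx'|$ lying in $\wh{S}^+ \cap \bB_R^n$, which yields LNE with the same constant as $\wh{S}^+$. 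For the open ball, I would set $R' := \max(|\bx|,|\bx'|) < R$ and apply the previous argument with $R'$ in place of $R$, obtaining an inner path in $\wh{S}^+ \cap \bB_{R'}^n \subset \wh{S}^+ \cap B_R^n$ with the same length bound.

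The main — rather mild — obstacle is verifying in part 1) that the refined path truly stays within each of the four truncation shapes; this reduces to the monotone control of intermediate radii between $x$ and $x'$. Part 2) is essentially automatic once one notes that $\pi_R$ is $1$-Lipschitz and stabilises $\wh{S}^+$, and that the open-ball case can be reduced to the closed-ball case by a radius shrinking trick.
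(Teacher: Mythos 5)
Your proof is correct; the paper itself states this corollary without a written proof, so there is no paper argument to compare against beyond what the Lemma suggests. Part~1) is the natural refinement of the construction in Lemma~\ref{lem:cone-LNE}: the concatenation of the spherical arc $x\gm$ at radius $x$ with the radial segment from $x\bu'$ to $x'\bu'$ has all intermediate radii in $[x,x']$, which confines it to each of the four truncations, and $\wh{S}^+\cap\bS_R^{n-1}=R\cdot S$ is a dilation of $S$ and so carries the same LNE constant. The cases you leave implicit ($\bx=\bbo$ or $\bu=\bu'$) are exactly those where the segment $[\bx,\bx']$ already lies in $\wh{S}^+$, the reduction made at the start of the Lemma, and the radii of that segment also stay in $[\min(x,x'),\max(x,x')]$, so the truncation is respected there too. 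Part~2) via the $1$-Lipschitz nearest-point retraction $\pi_R$ onto $\bB_R^n$ is a clean argument that applies for disconnected $S$ without appealing to Corollary~\ref{cor:union-cone-LNE}, and the radius-shrinking step handles the open ball correctly since $R'=\max(|\bx|,|\bx'|)<R$ (and $R'>0$ unless $\bx=\bx'=\bbo$, a trivial case). One remark worth recording: the retraction argument cannot replace the path construction for the exterior truncations $\wh{S}^+\setminus B_R^n$ and $\wh{S}^+\setminus\bB_R^n$ in part~1), since the complement of a ball is not convex and there is no $1$-Lipschitz retraction onto it, so your split into a direct path argument for part~1) and a retraction for part~2) is not redundant but genuinely necessary.
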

%
%

%
%
%
%
%
%
%
%
%
%
%
%
%
%
%
%
%
%
%
%
%
%
%
%
%
%
%
%
%
%
%
%
%
%
%
%
%
%
%
%
%
%
%
%
%
%
%
%

\subsection{Sub-manifolds}
$ $

Let $(M,\dd M)$ be a smooth manifold with boundary $\dd M$ (possibly empty).
A subset $N$ of $(M,\dd M)$ is a smooth sub-manifold of $(M,\dd 
M)$ with boundary $\dd N$ (possibly empty) if: (i) 
it is a smooth manifold with boundary $(N,\dd N)$; (ii) its manifold 
topology coincides with the induced topology from $M$; (iii) the 
inclusion mapping $N \hookrightarrow M$ is a smooth injective immersion.
\begin{definition}\label{def:p-sub}(\cite[Section I.7]{Mel})
	Let $(M,\dd M)$ be a smooth manifold with boundary. A subset $N$ of $M$ is a 
	\em p-sub-manifold \em if: (i) it is a smooth sub-manifold with boundary $\dd 
	N$ of $M$; (ii) $\dd N$ is contained in $\dd M$; (iii) $N$ is transverse to 
	$\dd M$:
	$$
	\bx \in \dd N \; \Longrightarrow \; T_\bx N + T_\bx \dd M = T_\bx M.
	$$
\end{definition}

\subsection{Spherical blow-up}
$ $

We use here basic notions presented in \cite{Mel} in our elementary embedded context.

The spherical blowing-up of $\Rn$ with centre the point $\ba$ of $\Rn$
is the mapping defined as
$$
\bbl_\ba :[\Rn,\ba] = \Rgo \times \bS^{n-1} \to \Rn, 
\;\; (r,\bu) \to r\bu + \ba.
$$
Its domain $[\Rn,\ba]$ is a smooth manifold with smooth compact 
boundary 
$\dd[\Rn,\ba] = 0\times \bS^{n-1}$. The blowing-down mapping 
is a diffeomorphism outside the boundary onto $\Rn\setminus \ba$
\begin{definition}\label{def:front-face}
	The \em front face \em of $[\Rn,\ba]$ is $\ff([\Rn,\ba]) := \dd[\Rn,\ba]$.
\end{definition}
We need the following notions.
\begin{definition}\label{def:strict-transform}
	Let $S$ be a subset of $\Rn$ such that its closure contains the 
	point $\ba$. The \em strict transform of $S$ by $\bbl_\ba$
	is the subset of $[\Rn,\ba]$ defined as
	$$
	[S,\ba]:=\clos(\bbl_\ba^{-1}(S\setminus\ba)).
	$$
	The \em front face \em of the strict transform of $S$ is 
	$$
	\ff([S,\ba]) := [S,\ba] \cap \ff([\Rn,\ba]) = 0\times S_\ba S,
	$$
\end{definition}
where 
$$
S_\ba S :=\left\{\bu \in \bS^{n-1} : \exists (\bx_k)_k\in S\setminus \ba 
\; {\rm with} \; \bx_k \to \ba \; : \; \frac{\bx_k-\ba}{|\bx_k - \ba|} 
\to \bu 
\right\}.
$$
The non-negative cone $\wh{S_\ba S}^+$ over $S_\ba S$ is the 
\em tangent cone of $S$ at $\ba$. \em 

\medskip
If $S$ is a connected smooth sub-manifold of $\Rn$ containing the point 
$\ba$, its strict transform $[S,\ba]$ is a smooth sub-manifold 
with smooth compact boundary $\dd[S,\ba] = \ff([S,\ba])$. More precisely, 
in such a case the strict transform $[S,\ba]$ is a p-sub-manifold of 
$[\Rn,\ba]$.

\medskip
The following result is well known, part of the folklore, and left as
an exercise.
\begin{proposition}\label{prop:blow-up-intrinsic}
	Let $\phi : N \to N'$ be a smooth diffeomorphism between the smooth
	sub-manifolds $N$ of $\Rn$ and $N'$ of $\R^{n'}$. If $\ba$ is a point
	of $N$, then the smooth diffeomorphism 
	$$
	\bbl_{\phi(\ba)}^{-1} \circ \phi \circ \bbl_\ba : [N,\ba] \setminus
	\ff ([N,\ba]) \to [N',\phi(\ba)] \setminus \ff ([N',\phi(\ba)])
	$$
	extends as a smooth diffeomorphism $[N,\ba] \to [N',\phi(\ba)]$ between
	smooth sub-manifolds with smooth compact non-empty boundaries.
\end{proposition}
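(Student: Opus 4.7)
The plan is to localise the problem near the front faces and reduce everything to a computation with smooth parametrisations and a Hadamard expansion in polar coordinates. Away from the front faces the statement is trivial because $\bbl_\ba$ and $\bbl_{\phi(\ba)}$ are diffeomorphisms there, so all the content sits at the boundary.

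First I would fix a smooth parametrisation $\psi:U\to N$ of a neighbourhood of $\ba$, with $U\subset\R^d$ open ($d=\dim N=\dim N'$), $\psi(0)=\ba$ and $A:=D\psi(0):\R^d\hookrightarrow\Rn$ injective. Hadamard's lemma yields $\psi(\bt)-\ba=A\bt+|\bt|^2R(\bt)$ with $R$ smooth. Setting $\bt=s\bv$ with $s\ge 0$ and $\bv\in\bS^{d-1}$ gives $\psi(s\bv)-\ba=s\,h(s,\bv)$, where $h(s,\bv):=A\bv+sR(s\bv)$ is smooth and $h(0,\bv)=A\bv$ does not vanish on the compact sphere $\bS^{d-1}$. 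Hence for $\ve>0$ small, the map
\[
\wt\psi(s,\bv)\;:=\;\bigl(s|h(s,\bv)|,\ h(s,\bv)/|h(s,\bv)|\bigr)\;\in\;\Rgo\times\bS^{n-1}
\]
is smooth on $[0,\ve)\times\bS^{d-1}$, agrees with $\bbl_\ba^{-1}\circ\psi\circ\bbl_0$ for $s>0$, and a direct Jacobian calculation (nonzero $\partial_s$-component from $|A\bv|$, injective $\partial_\bv$-component from the smooth embedding $\bv\mapsto A\bv/|A\bv|$ of $\bS^{d-1}$ into $\bS^{n-1}$) shows that $\wt\psi$ is a smooth embedding whose image is $[N,\ba]$ near its front face. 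This establishes both that $[N,\ba]$ is a p-sub-manifold of $[\Rn,\ba]$ with smooth compact non-empty boundary $\{0\}\times S_\ba N$ and that it carries a canonical polar parametrisation which is smooth up to the boundary. An identical argument applied to a parametrisation $\psi':U'\to N'$ at $\phi(\ba)$ produces $\wt{\psi'}$ with the analogous properties.

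Writing $\phi=\psi'\circ g\circ\psi^{-1}$ with $g:=(\psi')^{-1}\circ\phi\circ\psi:U\to U'$ a smooth diffeomorphism sending $0$ to $0$, a second Hadamard expansion gives $g(\bt)=B\bt+|\bt|^2S(\bt)$ with $B:=Dg(0)$ invertible linear, so $g(s\bv)=s\,k(s,\bv)$ where $k(s,\bv):=B\bv+sS(s\bv)$ is smooth and nonvanishing at $s=0$. The same polar trick shows that
\[
\wt g(s,\bv)\;:=\;\bigl(s|k(s,\bv)|,\ k(s,\bv)/|k(s,\bv)|\bigr)
\]
is a smooth extension of $\bbl_0^{-1}\circ g\circ\bbl_0$ up to the front face, and applying the same construction to $g^{-1}$ provides the smooth inverse, so $\wt g:[U,0]\to[U',0]$ is a diffeomorphism of smooth manifolds with boundary. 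Near $\ff([N,\ba])$ the factorisation
\[
\bbl_{\phi(\ba)}^{-1}\circ\phi\circ\bbl_\ba\;=\;\wt{\psi'}\circ\wt g\circ\wt\psi^{-1}
\]
holds by construction, each factor on the right extends smoothly to the boundary by the previous paragraph, hence so does the composition; repeating the entire argument with $\phi^{-1}$ in place of $\phi$ supplies the smooth inverse, yielding the desired smooth diffeomorphism $[N,\ba]\to[N',\phi(\ba)]$ of sub-manifolds with smooth compact non-empty boundary. The single technical obstacle is controlling the denominators $|h(s,\bv)|$ and $|k(s,\bv)|$, but this is automatic from the compactness of $\bS^{d-1}$ together with the injectivity of $A$ and the invertibility of $B$ at $s=0$.
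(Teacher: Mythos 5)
The paper gives no proof of this proposition---it declares the result ``well known, part of the folklore, and left as an exercise''---so there is no argument in the source to compare yours against. That said, your proof is correct in all essentials and follows the natural route: localise with parametrisations $\psi$, $\psi'$ near $\ba$ and $\phi(\ba)$, pass to polar coordinates $\bt=s\bv$, use a Hadamard-type factorisation to pull out a factor of $s$ and thereby extend the polar lift smoothly across $s=0$, verify it is an embedding by a rank computation at the boundary, and then observe that the transition map factors as $\wt{\psi'}\circ\wt g\circ\wt\psi^{-1}$, a composition of maps each smooth up to the boundary.

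One small imprecision is worth flagging. You write the Hadamard expansion as $\psi(\bt)-\ba = A\bt + |\bt|^2 R(\bt)$ with $R$ smooth; for $d\geq 2$ this exact form is not achievable (take $\psi(\bt)-\ba-A\bt=t_1^2$, for which $t_1^2/|\bt|^2$ is not even continuous at $0$). The correct statement is $\psi(\bt)-\ba = A\bt + \sum_{i,j} t_i t_j R_{ij}(\bt)$ with the $R_{ij}$ smooth (or, equivalently, $\psi(\bt)-\ba=\int_0^1 D\psi(u\bt)\,du\cdot\bt$). After substituting $\bt=s\bv$ with $|\bv|=1$, this still gives $\psi(s\bv)-\ba = s\,h(s,\bv)$ with $h(s,\bv)=A\bv + s\sum_{i,j} v_i v_j R_{ij}(s\bv)$ smooth and $h(0,\bv)=A\bv$, which is all your argument uses; the same correction applies to the expansion of $g$. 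With that cosmetic fix the proof goes through, and in particular it also furnishes the paper's unproved assertion just above the proposition that $[N,\ba]$ is a p-sub-manifold of $[\Rn,\ba]$ with smooth compact nonempty boundary $\{0\}\times S_\ba N$.
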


We finish this section with elementary observations.
Let $N$ be a sub-manifold of $\Rn$, and let $\ba$ be a point of $N$.
The blowing-up of $\ba$ in $N$ is the following mapping
$$
\bbl_\ba^N := \bbl_\ba|_{[N,\ba]} : [N,\ba] \to N.
$$
If $S$ is subset of $N$, its strict transform by $\bbl_\ba^N$ is
$$
[S,\ba]^N := \clos((\bbl_\ba^N)^{-1}(S\setminus \ba))
$$
and its front face is defined as
$$
\ff^N([S,\ba]^N) := [S,\ba]^N \cap \ff([N,\ba]).
$$
The following identities hold true
$$
[S,\ba]^N = [S,\ba] \;\; {\rm and} \;\; \ff^N([S,\ba]^N) = \ff([S,\ba]).
$$

\section{Conic singular points}\label{section:CSP}
We present here the principal object of our point of view. 
The interest in a metrically conical point in a Riemannian context (shorten to conic singular below) dates back at least to \cite{Che1,ChTa1,ChTa2,Che2} (see also \cite{MeWu} and \cite{Gri} for an interesting variation). Our context dictates to present a notion of conic (singular) point using charts. Proposition \ref{prop:blow-up-intrinsic} guarantees that the notion of conic point
we define is intrinsic, independent of any $C^2$ Riemannian structure. 
 Although our notion of conic singular point of a subset is differential, 
whenever the subset inherits the inner metric from the ambient Riemannian 
manifold, it turns into a metrically conical point.

\begin{definition}
A \em singular point $\ba$ of a subset $S$ \em of a smooth manifold is a point at which the subset germ $(S,\ba)$ is not that of a smooth sub-manifold. Otherwise we say that the point \em $\ba$ is a smooth point \em of $S$. The \em singular locus $S_\sing$ of $S$ \em is the set of singular
points of $S$.
\end{definition}

\begin{definition}\label{def:conic-point}
	Let $X$ be a subset of $\Rn$ such that its germ $(X,\ba)$ at $\ba \in \Rn$
	is non-empty and closed.

	The point $\ba$ is \em a conic point of $X$, \em 
	if there  exists a positive radius $r$ such that the strict transform  
	$[X(\ba)_{< r},\ba]$ is a closed subset and a p-sub-manifold of 
	$[\Rn(\ba)_{<r},\ba]$.
	
\end{definition}
	If $\ba$ is a conic point of the subset $X$ of $\Rn$, then $S_\ba X$ is a 
	smooth compact sub-manifold of $\bS^{n-1}$.  

%
%
%
%
%

%
%
%
%
%

%
%
\begin{definition}\label{def:conic-abstract}
	Let $M$ be a smooth manifold, $Z$ be a subset of $M$
	and $\bz$ be a point of $Z$.

	The subset $Z$ is \em conic at $\bz$, \em if there exists a smooth 
	chart $\psi: \cU \to B_1^n$ of $M$ centred at $\bz$ such that $\psi(Z)$ is conic at 
	$\psi(\bz)$.
	
\end{definition}

Definition \ref{def:conic-abstract} is well-posed: let $\bz\in Z$ and $\psi_i:\cU_i \to B_1^n$
be two smooth charts centred at $\bz$ such that $\psi_i(\bz) = \ba_i$. 
Let $S_i := \psi_i(Z)$. Proposition \ref{prop:blow-up-intrinsic} 
implies that $S_1$ is conic at $\ba_1$ if and only if $S_2$ is 
conic at $\ba_2$, and similarly $S_1$ is singularly conic at $\ba_1$
if and only if $S_2$ is singularly conic at $\ba_2$. 

If $\bz$ is a conic point of the subset $Z$ of the smooth manifold $M$,
then the germ $(Z\setminus \bz,\bz)$ is that of smooth sub-manifold. 
Thus any conic singular point of $Z$ is an isolated singular point of $Z$. 
Moreover, at a conic point $\bz$, the subset $Z$ admits an open neighbourhood of $\bz$ consisting only of conic points. 

The above two definitions of conic points agree for sub-manifolds of 
$\Rn$. In this elementary context the persistence of the strict transform being 
the germ of a p-sub-manifold goes through the restriction of the blowing-up to 
a containing sub-manifold, since it is a variation of the commutativity of the 
blowing-up with the base change (see also \cite[Chapter V]{Mel}).
\begin{proposition}\label{claim:conic-N-conic}
	The subset $S$ of the sub-manifold $N$ of $\Rn$, is conic in $N$ at $\ba$, if and only if the subset $S$ of $\Rn$ is conic at $\ba$. 
\end{proposition}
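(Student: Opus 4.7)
The plan is to reduce the statement to the linear model $N = \R^k \times \{0\} \subset \Rn$ via a straightening chart, and then to extract the equivalence from a transitivity argument for the p-sub-manifold property at the level of the spherical blow-ups.

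First I would straighten $N$ near $\ba$: since $N$ is a smooth sub-manifold of $\Rn$, there exists a local diffeomorphism $\Phi$ of a neighbourhood of $\ba$ in $\Rn$ onto a neighbourhood of $\bbo$ such that $\Phi(\ba) = \bbo$ and $\Phi(N) = \R^k \times \{0\}$, where $k = \dim N$. Applying Proposition \ref{prop:blow-up-intrinsic} to $\Phi$, the chart lifts to a smooth diffeomorphism of spherical blow-ups which carries $[N, \ba]$ onto $[\R^k, \bbo]$ and $[S, \ba]$ onto $[\Phi(S), \bbo]$. Since the p-sub-manifold condition is preserved by smooth diffeomorphisms of pairs, I may assume from now on that $N = \R^k \times \{0\}$ and $\ba = \bbo$.

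In this linear model the blow-ups take the concrete product form $[\Rn, \bbo] = \Rgo \times \bS^{n-1}$ and $[\R^k, \bbo] = \Rgo \times \bS^{k-1}$ with $\bS^{k-1} = \bS^{n-1} \cap \R^k$, so that $[\R^k, \bbo]$ is visibly a closed p-sub-manifold of $[\Rn, \bbo]$. The identities recalled at the end of Section \ref{section:LNE} give $[S, \bbo]^{\R^k} = [S, \bbo]$, so the two candidate strict transforms to be tested for the p-sub-manifold property coincide as subsets. It thus remains to show that $[S, \bbo]$ is a p-sub-manifold of $[\R^k, \bbo]$ if and only if it is a p-sub-manifold of $[\Rn, \bbo]$.

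The forward implication will be the standard transitivity of p-sub-manifolds: closedness and the smooth injective immersion property pass to a larger ambient, the boundary inclusions nest as $\dd[S, \bbo] \subset \dd[\R^k, \bbo] \subset \dd[\Rn, \bbo]$, and transversality of $[S,\bbo]$ with the front face $\bS^{n-1}$ follows from transversality with $\bS^{k-1}$ combined with the transversality of $[\R^k, \bbo]$ with $\bS^{n-1}$. For the converse, the key observation is that at any $\bx = (0, \bu) \in \dd[S, \bbo]$ the subspace $T_\bx [S, \bbo]$ is contained in $T_\bx [\R^k, \bbo] = \R \dd_r \oplus T_\bu \bS^{k-1}$, yet must contain a vector with nonzero radial component (otherwise it would sit inside $T_\bu \bS^{n-1}$ and the ambient transversality with $\bS^{n-1}$ would fail). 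Together with $T_\bu \bS^{k-1}$ this radial direction spans $T_\bx [\R^k, \bbo]$, giving the smaller-ambient transversality. The hardest part of the argument is precisely this tangent-space bookkeeping at the front face: one has to translate between two transversality conditions in nested p-sub-manifolds of a common ambient blow-up, and the translation works only because $[N, \bbo]$ is itself transverse to $\bS^{n-1}$, so that the radial direction is common to both blow-ups.
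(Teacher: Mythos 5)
Your argument is correct and follows essentially the same route as the paper's (very terse) proof: both rest on the set-theoretic coincidence $[S,\ba]^N = [S,\ba]$ and the nesting of front faces $\ff^N[S,\ba] = \ff[S,\ba] \subset \ff[N,\ba]$, and the real content is the comparison of the two p-sub-manifold conditions for $[S,\ba]$ inside the nested blow-ups, which the paper compresses into ``Thus the statement'' together with the preamble remark on commutativity of blow-up with base change. Your preliminary straightening of $N$ via Proposition~\ref{prop:blow-up-intrinsic} is an optional but useful addition that makes the inclusions $T_\bu\bS^{k-1}\subset T_\bu\bS^{n-1}$ and the common radial direction $\dd_r$ concrete; the transversality bookkeeping at the front face that you carry out in both directions is precisely what the paper leaves to the reader, and it is done correctly --- in the converse direction the key point, which you identify, is that transversality in the larger blow-up forces a vector of $T_\bx[S,\ba]$ with nonzero $\dd_r$-component, and this together with $T_\bu\bS^{k-1}$ spans $T_\bx[N,\ba]$.
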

\begin{proof}
We have $\ba \in S \subset N \subset \Rn$, each set being locally closed at 
$\ba$, and thus $[S,\ba]^N = [S,\ba]$ and $\ff^N[S,\ba] = \ff[S,\ba] \subset 
\ff[N,\ba]$. Thus the statement.
\end{proof}
%

%
%
Following Definition \ref{def:conic-abstract}, we can introduce the following
\begin{definition}\label{def:conic-singular sub-manifold}
	%
	A subset $W$ of a smooth manifold $M$ is a \em conic singular sub-manifold \em  if: (i) it is a closed subset of $M$, (ii) each point of $W$ is conic; (iii) the singular locus $W_\sing$ of $W$ is finite or empty. 
\end{definition}
%
%
%
%
	%
	%
	%
	%

	%
	%
	\begin{remark}\label{rmk:strongly-conic}
		1) A smooth sub-manifold is conic at each of its point.
		
		\smallskip\noindent
		2) A cone $\wh{Z}^+$ over a closed smooth sub-manifold $Z$ of $\bS^{n-1}$
		has a conic point at $\bbo$. It is singular whenever $Z$ is not connected 
		or is not the intersection of $\bS^{n-1}$ with a linear sub-space of $\Rn$.
 
\smallskip\noindent
		3) If $W$ is a compact connected singular submanifold with non empty singular locus, the connected components of $W\setminus W_\sing$ may have different dimensions.   
	\end{remark}

	The following collar neighbourhood lemma is a key ingredient of the main result
(see also \cite[Theorem 1.2]{MeWu} for a much more general context and precise statement).
	\begin{lemma}\label{lem:collar-origin} 
		Let $X$ be a subset of $\Rn$ and $\ba$ be a point of $X$.
				If $\ba$ is a conic point of $X$, there exists 
		$0 < r_0$ and a smooth diffeomorphism
		$$
		\tau_\ba: [0,r_0] \times S_\ba X \; \to \; [X,\ba] \cap ([0,r_0]\times \bS^{n-1}) 
		$$
		which is link-preserving, that is of the form $(r,\bu) \mapsto (r,\mu(r,\bu))$ 
		for all $r\leq r_0$. Moreover, the restriction of $\tau_\ba$ to 
		$\ff[X,\ba]$ is the identity mapping. 
	\end{lemma}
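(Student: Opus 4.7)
\bigskip

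Plan. Denote by $\rho : [\Rn,\ba] = \Rgo \times \bS^{n-1} \to \Rgo$ the radial coordinate function $\rho(r,\bu) = r$. The idea is to build a smooth vector field $V$ on $[X,\ba]$ near the front face with $d\rho(V) \equiv 1$, and then set $\tau_\ba(r,\bu) := \varphi_r(0,\bu)$, where $\varphi_t$ is the time-$t$ flow of $V$. The normalisation $d\rho(V) = 1$ forces $\rho(\varphi_t(0,\bu)) = t$ along every trajectory, which automatically yields the link-preserving form $\tau_\ba(r,\bu) = (r,\mu(r,\bu))$, while $\varphi_0 = \mathrm{id}$ gives the identity restriction to the front face.

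First I would upgrade the p-submanifold hypothesis to a submersion statement. At every $\bx \in \ff[X,\ba]$, the transversality required in Definition \ref{def:p-sub} to the hypersurface $\ff[\Rn,\ba] = \{\rho=0\}$ reduces, since $\ker d\rho = T\ff[\Rn,\ba]$, to saying that $d\rho|_{T_\bx[X,\ba]}$ is nonzero. This being an open condition, for $r_0 > 0$ small enough it persists throughout $\Omega := [X,\ba] \cap ([0,r_0] \times \bS^{n-1})$. The closedness clause in Definition \ref{def:conic-point} makes $\Omega$ a closed subset of the compact box $[0,r_0] \times \bS^{n-1}$, hence compact.

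Second I would produce $V$. Endow $[X,\ba]$ with any Riemannian metric $h$ smooth up to the boundary, and set $V := \nb^h \rho / h(\nb^h \rho, \nb^h \rho)$ on $\Omega$. Because $d\rho$ is nonzero on $T\Omega$, the gradient $\nb^h\rho$ is nowhere vanishing and $V$ is smooth on $\Omega$; by construction $V$ is tangent to $[X,\ba]$ and satisfies $d\rho(V) \equiv 1$. For each $(0,\bu_0) \in \ff[X,\ba]$ the maximal integral curve $\gamma$ of $V$ with $\gamma(0) = (0,\bu_0)$ satisfies $\rho(\gamma(t)) = t$, so it cannot leave the compact set $\Omega$ before time $r_0$ and therefore extends to the whole of $[0,r_0]$. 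Setting $\tau_\ba(r,\bu) := \varphi_r(0,\bu)$ then yields a smooth map of the announced link-preserving form with the identity restriction on the front face.

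Finally I would verify that $\tau_\ba$ is a diffeomorphism onto $\Omega$. The partial derivative $\partial_r\tau_\ba$ equals $V \circ \tau_\ba$ and is transverse to the level sets of $\rho$ because $d\rho(V) = 1$; the partial differential $\partial_\bu \tau_\ba$ is $d\varphi_r$ restricted to $T\ff[X,\ba]$, which is injective and takes values in $T\rho^{-1}(r)$. The two contributions span $T[X,\ba]$ everywhere, so $d\tau_\ba$ is invertible throughout $[0,r_0] \times S_\ba X$. Global injectivity is automatic: $\tau_\ba(r_1,\bu_1) = \tau_\ba(r_2,\bu_2)$ forces $r_1 = r_2$ by applying $\rho$, and then $\bu_1 = \bu_2$ by uniqueness of ODE solutions. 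Surjectivity follows by reversing the flow: every $q \in \Omega$ equals $\varphi_{\rho(q)}(\varphi_{-\rho(q)}(q))$, with the backward trajectory remaining inside $\Omega$ since $\rho$ strictly decreases along it. The main subtlety is securing a common flow-time interval $[0,r_0]$ valid uniformly over the compact $\ff[X,\ba]$; this is exactly what compactness of $\Omega$ together with the normalisation $d\rho(V)=1$ deliver.
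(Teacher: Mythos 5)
Your proof follows essentially the same strategy as the paper's: both take the gradient of the radial coordinate on $[X,\ba]$ with respect to a metric smooth up to the boundary, normalise it to $\ups = \nb\rho/|\nb\rho|^2$ so that $d\rho(\ups)\equiv 1$, and flow out from the compact front face to produce the link-preserving collar map $\tau_\ba(r,\bu)=\varphi_r(0,\bu)$. The only cosmetic difference is that the paper works with the explicit product metric $\rd r^2\otimes\rd\bu^2$ on $\Rgo\times\bS^{n-1}$ whereas you allow an arbitrary metric smooth to the boundary, and you spell out the diffeomorphism verification which the paper leaves implicit; the mathematical content is identical.
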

	\begin{proof}
		Without loss of generality, we can assume that $\cX: = [X,\ba]$ is closed and a p-sub-manifold with boundary $\dd \cX = \ff([X,\ba]) = 
		0 \times S_\ba X$ of $\bM =  [\Rn,\ba] = \Rgo\times\bS^{n-1}$.
		Thus $T\cX$ is a well defined vector sub-bundle of $TM|_\cX$.
		
		Let $\rd \bu^2$ be the restriction $eucl|_{\bS^{n-1}}$ of the 
		Euclidean metric tensor to $\bS^{n-1}$. Let 
		$$
		\bh = eucl|_\Rgo \otimes eucl|_{\bS^{n-1}} = \rd r^2 \otimes \rd\bu^2
		$$
		be the product metric over $\bM$.
		Let $|\cdot|_\bh$ be the norm induced by $\bh$. 
		Let $\bg$ be the Riemannian structure on $\cX$ obtained 
		from restricting $\bh$ to $\cX$ and let $|\cdot|_\bg$ be the norm induced by 
		$\bg$.
		Define the function
		$$
		r_\cX: \cX\to \R, \;\; (r,\bu) \mapsto r.
		$$ 
		Observe that it is smooth.  
		Let $\xi := \nb_\bg r_\cX$ be the gradient of $r_\cX$ taken w.r.t. the 
		metric $\bg$. 
		The vector field
		$\xi$ is smooth and decomposes in $T\bM = T\Rgo \oplus T\bS^{n-1}$ as the 
		orthogonal sum  
		$$
		\xi  = x \dd_r \oplus \xi_\bS.
		$$
		The function $x$ is smooth over $\cX$ and the 
		vector field $\xi_\bS$ is smooth over $\cX$. Since $|\dd_r|_\bh = 1$, 
		we deduce that $|\xi|_\bg^2 = x^2 + |\xi_\bS|_\bg^2 \leq 1$. 
		The vector field $\xi$ is smooth and tangent to $\cX$, and does not vanish
		along $\dd\cX$, since $\cX$ is transverse to $\dd\bM$. 
		There exist constants $r_1,x_1> 0$ such that 
		$$
		\min \{x(\bm) : \bm \in \cX_{\leq r_1} \} = x_1 >0, \;\; {\rm for} \;\;
\cX_{\leq r_1} := \cX \cap [0,r_1]\times \bS^{n-1}
		$$
		since $\dd\cX$ is compact. 
		The following vector field  
		$$
		\ups := \frac{1}{|\xi|_\bg^2} \,\xi
		$$
		is smooth over its domain the open subset $D := \{|\xi|_\bg>0\}$,  
		which contains $\cX_{\leq r_1}$. Let 
		$$
		\Psi: \cD \subset \R\times D \to \cX
		$$ 
		be the flow of $\ups$ for the initial Cauchy problem $\Psi(0,\bm) = \bm$.
		It is a smooth mapping over its domain $\cD$.
		Since $S_\ba X$ is compact and $\ups$ is smooth over $\cX_{\leq r_1}$, 
		there exists $r_1\geq r_0(>0)$ such  
		that $[0,r_0]\times S_\ba X$ is contained in $\cD$. 
		The smooth diffeomorphism we are looking for is 
		$$
		\tau_\ba : [0,r_0]\times S_\ba X \to \cX \cap [0,r_0]\times \bS^{n-1}, 
		\;\; (r,\bu) \mapsto \Psi(r,\bu) = (r,\mu(r,\bu)),
		$$
		since $r_\cX(\Psi(t,\bu)) = t$.
	\end{proof}
	%
	%
	%
	%
	%
	%
	%
	%
	%
	%
	%
	%
	%
	%
	%
	%
	%
	%
	%
	%
	%
	%
	%
	%
	%
	%
	%
	%
	%
	%
	%
	%
	%
	%
	%
	%
	%
	%
	%
	%
	%
	%
	%
	%
	%
	%
	%
	\section{Conic singular sub-manifolds are Lipschitz normally embedded}

	\subsection{Compact conic singular sub-manifolds are LNE}\label{section:MRCC}
	$ $
	
	
	Nash Embedding Theorem states 
	that any smooth Riemannian manifold $(M,g)$ embeds isometrically in
	$\Rn$ as a sub-manifold equipped with the inner metric: 
	There exists a smooth embedding
	$$
	\iota : (M,g) \to (\iota(M),eucl|_{\iota(M)}) \subset \Rn, \;\; 
	{\rm such} \; {\rm that}\;\;
	\iota^*(eucl|_{\iota(M)}) = g.
	$$
			Let $N := \iota(M)$ be equipped with its inner distance $d_\inn^N=
			(\iota^{-1})^*d_M$. Let $Y$ be a subset of $M$ and let $Z = \iota(Y)$. 
			There are four distances over $Z$: The euclidean outer distance $d_Z$; The inner distance $d_\inn^{\Rn,Z}$ on $Z$ as a subset of $\Rn$; The outer distance 
			$d_Z^N$ on $Z$ as a subset of $N$; And the inner distance $d_\inn^{N,Z}$ on $Z$ as a subset of $N$. The following estimates are obvious
		\begin{equation}\label{eq:Z-4-metrics}
		d_\inn^{N,Z} \; \geq \; \max \left(d_Z^N,d_\inn^{\Rn,Z}\right) \; \geq \; d_Z.
		\end{equation}

			By definition of Nash embedding we find
		\begin{equation}\label{eq:Z-3-metrics}
			(\iota^{-1})^* \, d_\inn^Y = \, d_\inn^{N,Z} = \, d_\inn^{\Rn,Z}.
		\end{equation}

	\medskip	
	By Claim \ref{claim:conic-N-conic} and Proposition 
	\ref{prop:blow-up-intrinsic} the following is obvious
	\begin{claim}\label{claim:conic-abstract-M-conic}
		The subset $Z$ of $M$ is conic at $\bz$ (respectively singularly conic
		at $\bz$) if and only if $\iota(Z)$ is conic in $\iota(M)$ at $\iota(\bz)$ 
		(respectively singularly conic at $\iota(\bz)$).
	\end{claim}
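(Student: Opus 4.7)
The plan is to unwrap Definition \ref{def:conic-abstract} via a single smooth chart of $M$ at $\bz$, transport the resulting Euclidean picture through $\iota$ using Proposition \ref{prop:blow-up-intrinsic}, and then collapse the ambient using Proposition \ref{claim:conic-N-conic}. The whole argument is essentially formal once those two results are in hand.

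Concretely, I would choose a smooth chart $\psi : \cU \to B_1^n$ of $M$ centred at $\bz$. By Definition \ref{def:conic-abstract}, $Z$ is conic at $\bz$ in $M$ if and only if $\psi(Z\cap \cU)$ is conic at $\bbo$ in $\Rn$. Since $\iota(\cU)$ is an open subset of the smooth sub-manifold $N = \iota(M)$ of $\Rn$ containing $\iota(\bz)$, the composition
$$
\Phi := \psi \circ (\iota|_{\cU})^{-1} : \iota(\cU) \to B_1^n
$$
is a smooth diffeomorphism between two smooth sub-manifolds of Euclidean spaces, sending $\iota(\bz)$ to $\bbo$ and $\iota(Z\cap \cU)$ onto $\psi(Z\cap \cU)$. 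Proposition \ref{prop:blow-up-intrinsic} lifts $\Phi$ to a smooth diffeomorphism $[\iota(\cU),\iota(\bz)] \to [B_1^n,\bbo]$ of the spherical blow-ups, and by its naturality on set-theoretic strict transforms this lift carries $[\iota(Z\cap \cU),\iota(\bz)]^{\iota(\cU)}$ diffeomorphically onto $[\psi(Z\cap \cU),\bbo]$. Consequently, one of these two strict transforms is a closed p-sub-manifold of its ambient blow-up near the front face if and only if the other is; equivalently, $\iota(Z)$ is conic in $\iota(\cU)$ at $\iota(\bz)$ if and only if $\psi(Z)$ is conic at $\bbo$ in $\Rn$.

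To match "conic in $\iota(\cU)$" with "conic in $N$" (the statement of the claim), I would apply Proposition \ref{claim:conic-N-conic}, which tells us that for a closed germ sitting inside a smooth sub-manifold of $\Rn$ conicity does not depend on the sub-manifold one chooses as ambient. Chaining the three equivalences proves the conic half of the claim. The "singularly conic" half is immediate from the same chart $\psi$ and the diffeomorphism $\Phi$: both are local smooth diffeomorphisms, so they preserve the property of the set germ at the marked point being, or not being, that of a smooth sub-manifold.

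The only thing that needs real care is verifying that the lift of $\Phi$ produced by Proposition \ref{prop:blow-up-intrinsic} genuinely intertwines the two strict transforms of $Z \cap \cU$ with their respective front faces, but this is precisely the naturality content of that proposition, so no substantial obstacle arises.
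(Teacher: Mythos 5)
Your proof is correct and follows precisely the route the paper intends: the paper simply declares the claim obvious from Proposition \ref{claim:conic-N-conic} and Proposition \ref{prop:blow-up-intrinsic}, and what you have written is the natural unwinding of that one-line citation — choosing a chart $\psi$, composing with $\iota^{-1}$, lifting through Proposition \ref{prop:blow-up-intrinsic}, and collapsing the ambient via Proposition \ref{claim:conic-N-conic}. No gap.
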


	The main result of the paper is
	\begin{theorem}\label{thm:main-compact}
		Let $(M,g)$ be a smooth Riemannian manifold. Then, any 
		connected, compact, conic singular sub-manifold $W$ of $M$ is LNE.
	\end{theorem}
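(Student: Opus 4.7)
The plan is to reduce to a Euclidean ambient manifold, localize the LNE question at each point via Proposition \ref{lem:compact-LNE}, and model a neighbourhood of each conic singular point by a truncated tangent cone using the collar lemma. First I would invoke Nash's isometric embedding to realize $(M,g)$ as a submanifold $N \subset \Rn$ equipped with its inner metric, and set $Z := \iota(W)$. Relations (\ref{eq:Z-4-metrics}) and (\ref{eq:Z-3-metrics}) give $d_\inn^{N,Z} = d_\inn^{\Rn,Z}$ and $d_Z^N \geq d_Z$, so LNE of $Z$ in $\Rn$ (with Euclidean outer distance) forces LNE of $W$ in $M$. Since Claim \ref{claim:conic-abstract-M-conic} preserves the conic singular structure, we may assume $M = \Rn$. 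Proposition \ref{lem:compact-LNE} then reduces the problem to local LNE at every point; at a smooth point this is automatic, so only the finitely many conic singular points $\ba \in W_\sing$ remain to be handled.

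Fix such an $\ba$ and apply Lemma \ref{lem:collar-origin} to obtain $r_0 > 0$ and a link-preserving diffeomorphism $\tau_\ba(r,\bu) = (r,\mu(r,\bu))$ with $\mu(0,\bu) = \bu$. Composing with the blowing-down yields
$$
\Phi : [0,r_0] \times S_\ba W \to W(\ba)_{\leq r_0}, \qquad (r,\bu) \mapsto \ba + r\mu(r,\bu),
$$
which I compare with the cone parametrization $\Psi(r,\bu) = \ba + r\bu$, whose image is the truncated cone $C := \ba + \bigl(\wh{S_\ba W}^+ \cap \bB_{r_0}^n\bigr)$. The link $S_\ba W$ is a compact smooth submanifold of $\bS^{n-1}$, its finitely many connected components are LNE by Proposition \ref{lem:compact-LNE}, and Corollaries \ref{cor:union-cone-LNE} and \ref{cor:cone-LNE} then make $C$ LNE.

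The main obstacle is to show that, for $r_0$ small enough, $\Phi \circ \Psi^{-1}$ descends to a bi-Lipschitz homeomorphism $C \to W(\ba)_{\leq r_0}$ for \emph{both} the outer and the inner metrics; once this is in hand LNE transports from $C$ to a neighbourhood of $\ba$ in $W$. Both parametrizations collapse $\{0\} \times S_\ba W$ onto $\ba$, so the estimate must be uniform as $r \to 0$. Writing $\mu(r,\bu) = \bu + r\,G(r,\bu)$ with $G$ smooth and bounded on the compact domain, the perturbation $\Phi - \Psi$ is $O(r^2)$. Using the identity
$$
|r_1\bu_1 - r_2\bu_2|^2 = (r_1 - r_2)^2 + r_1 r_2\,|\bu_1 - \bu_2|^2,
$$
a short mean-value computation bounds $|r_1^2 G(r_1,\bu_1) - r_2^2 G(r_2,\bu_2)|$ by $C\,r_0\,|r_1\bu_1 - r_2\bu_2|$, which handles the outer metric for $r_0$ sufficiently small. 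The inner bi-Lipschitz estimate then follows by integrating an analogous derivative bound along rectifiable paths --- presumably the content of the Claim \ref{claim:graph-origin} announced in the introduction --- and combining this local LNE at each $\ba \in W_\sing$ with the obvious local LNE at smooth points, Proposition \ref{lem:compact-LNE} concludes the proof.
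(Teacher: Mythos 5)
Your proposal follows essentially the paper's route: Nash embedding, localization via Proposition~\ref{lem:compact-LNE}, the collar Lemma~\ref{lem:collar-origin} at each singular point, and an outer bi-Lipschitz comparison between a truncated tangent cone and a neighbourhood of the singular point, which is exactly the content of Claim~\ref{claim:graph-origin}. Two small remarks. First, once $\Phi\circ\Psi^{-1}$ is shown outer bi-Lipschitz, the inner bi-Lipschitz follows automatically, since an $L$-Lipschitz map distorts the length of any rectifiable path by at most a factor of $L$; the separate ``integration along rectifiable paths'' you announce is therefore superfluous, and indeed the paper's Claim~\ref{claim:graph-origin} only asserts outer bi-Lipschitz. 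Second, your bound $|r_1^2 G(r_1,\bu_1)-r_2^2 G(r_2,\bu_2)| \leq C r_0 |r_1\bu_1 - r_2\bu_2|$ via a mean-value argument implicitly uses that $G$ is Lipschitz in $\bu$, which requires $\bu_1,\bu_2$ to lie in the \emph{same} connected component of $S_\ba W$; when they lie in different components you should instead combine boundedness of $G$ with the lower bound $|r_1\bu_1-r_2\bu_2| \geq \tfrac{\delta}{2}(r_1+r_2)$ coming from your own identity together with $|\bu_1-\bu_2|\geq\delta$ --- this is exactly the law-of-cosines step the paper isolates. With that case distinction made explicit, your direct algebraic estimate is a perfectly good, and arguably cleaner, substitute for the paper's pointwise bound on $\|D\phi_\bbo\|$ and $\|D\phi_\bbo^{-1}\|$.
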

	\begin{proof}
		By Nash Embedding Theorem, let $\iota(M,g) = (N,eucl|_N)$, for a smooth 
		sub-manifold $N$ of $\Rn$. 
		Following Estimates \eqref{eq:Z-4-metrics} and \eqref{eq:Z-3-metrics}
	    there is just to show that the image $X:=\iota(W)$ is LNE as a subset of $\Rn$. Claim   \ref{claim:conic-abstract-M-conic} implies that $X$ is a connected compact conic singular 
		sub-manifold of $\Rn$ with singular locus $X_\sing := \iota(W_\sing)$.
		
		If $X_\sing$ is empty, the result follows from Lemma \ref{lem:compact-LNE}.
		
		\medskip
		Assume that $X_\sing = \{\ba_1,\ldots,\ba_s\}$ with $s\geq 1$.
		
		\medskip
		Assume that $\ba_1 = \bbo$. Given a positive radius $r$, we recall that
		$$
		X_r := X \cap \bS_r^{n-1} \;\; {\rm and} \;\; X_{\leq r} := X\cap\bB_r^n.
		$$
		Let $r_0>0$ and $\tau_\bbo$ be as in Lemma \ref{lem:collar-origin}. Let $\beta = \bbl_\bbo$.
		The mapping $\beta \circ \tau_\bbo \circ \beta^{-1}$, which is a smooth 
		diffeomorphism $(\wh{S_\bbo X}^+){\leq r_0} \setminus \bbo \to
		X_{\leq r_0} \setminus \bbo$ extends as a homeomorphism 
		$$
		\phi_\bbo: (\wh{S_\bbo X}^+)_{\leq r_0} \to X_{\leq r_0}
		$$
		by continuity at $\bbo$, taking the value $\bbo$. By definition
		$\phi_\bbo$ preserves the intersection with the spheres $\bS_r^{n-1}$.
		The following property is the other key ingredient of the main result.
		\begin{claim}\label{claim:graph-origin}
			The mapping $\phi_\bbo$ is bi-Lipschitz when the source and target spaces are 
			equipped with
			their respective outer metric space structures. 
		\end{claim}
		\begin{proof}
			Let $\bM:=[\Rn,\bbo]$. If $S$ is a subset of $\bM$ and $r$ is a positive
			radius, we define the following subsets of $\bM$
			$$
			S_r := S \cap r\times\bS^{n-1} \;\; {\rm and} \;\;
			S_{\leq r} := S \cap [0,r] \times \bS^{n-1}.
			$$
			Let $\cX := [X,\bbo]$ be the strict transform of $X$ by $\beta$.
			To ease notations in the proof, write 
			$$
			C_\bbo := \wh{S_\bbo X}^+.
			$$
			
			\medskip\noindent
			Let $\bm = (r,\bu) \in \bM$ with $r>0$. 
			Then 
			$$
			T_\bm \bM = \R\times T_{\bu}\bS^{n-1}.
			$$
			Let $\bx = r\bu$, then $T_\bx \Rn$ decomposes as the orthogonal sum
			$$
			T_\bx \Rn = \R\dd_r \oplus T_{r\bu}\bS_r^{n-1}, \;\; {\rm where} \;\;
			\dd_r(\bx) := \bu.
			$$
			As vector sub-spaces of
			$\Rn$, note that $T_\bu \bS^{n-1} = 
			T_{t\bu}\bS_t^{n-1}$. 
			Using these tangent space decompositions we find 
			$$
			D_\bm \beta = \bun_\R \oplus r \cdot {\rm Id}_{T_\bu\bS^{n-1}} 
			\;\; {\rm and} \;\;
			D_\bx \beta^{-1} = \left(\bun_\R^{-1} \, , \, \frac{1}{|\bx|} \cdot 
			{\rm Id}_{T_\bu\bS^{n-1}}\right)
			$$
			where $\bun_\R : \R \to \R\bx, \;\; t \mapsto t\cdot \dd_r$. 
			Writing $\tau:=\tau_\bbo$ of Lemma \ref{lem:collar-origin}, we recall that 
			$$
			\tau (r,\bu) = (r,\mu(r,\bu)) \in \bM
			$$ 
			where $\mu : (\beta^{-1}(C_\bbo))_{\leq r_0} =  [0,r_0]\times \bS_\bbo X 
			\to \bS^{n-1}$ is a smooth submersion. The manifold $\bM$
			is equipped with the Riemannian metric $\bh = eucl|_\Rgo \otimes 
			eucl|_{\bS^{n-1}}$. Therefore $D \mu$ takes values in $T\bS^{n-1}$ and its norm is 
			uniformly bounded over the compact cylinder 
			$(\beta^{-1}(C_\bbo))_{\leq r_0}$: There exists a positive constant $L$
			such that
			$$
			\|D_\bm \mu\|_\bh \leq L, \;\; \forall \; \bm \in 
			(\beta^{-1}(C_\bbo))_{\leq r_0}.
			$$
			Let 
			$$
			\xi = (t,\xi_\bS) \in T_\bx C_\bbo = \R\times T_\bu S_\bbo X,
			$$
			with $\xi_\bS \in T_\bu \bS^{n-1}$. Thus we get 
			$$
			D_\bm \tau \cdot\xi = (t,\zt_\bS) \in \R \times T_{\mu(r,\bu)}\bS^{n-1}
			$$
			where $\zt_\bS := D_\bm \mu \cdot \xi$. Note that 
			$$
			|\zt_\bS|^2\leq L^2 \cdot (t^2 + |\xi_\bS^2|). 
			$$
			If $\vec{v} := t\dd_r \oplus \ups \in T_\bx C_\bbo = \R \times T_\bu S_\bbo X$
			with $\ups \in T_\bu S_\bbo X$ we find 
			$$
			D_\bx \phi_\bbo \cdot \vec{v} = t\dd_r \oplus D_\bm \mu \cdot 
			\left(|\bx| t, \ups \right)  .
			$$
			Therefore the norm of $D\phi_\bbo$ is uniformly bounded over $(C_\bbo)_{\leq r_0} 
			\setminus \bbo$ by $1+L$.
			The same type of arguments show that the norm of 
			$D(\phi_\bbo)^{-1}$ is also uniformly bounded over $X_{\leq r_0}\setminus \bbo$.
			Thus $\phi_\bbo$ is bi-Lipschitz over the closure of each connected 
			component of $(C_\bbo)_{\leq r_0} \setminus \bbo$.
			
			\medskip
			If $S_\bbo X$ has $c$ connected components $S_1,\ldots,S_c$. Consider
			the following non-negative cone
			$$
			C_i := \wh{S_i}^+
			$$
			so that $C_\bbo = \cup_{i=1}^c C_i$. Let 
			$$
			\dlt := \min\{\dist(S_i,S_j) : 1\leq i < j\leq c\}.
			$$
			where $\dist$ is taken in $\Rn$. Since the $S_i$ are connected closed
			smooth sub-manifold of $\bS^{n-1}$, there are pairwise disjoint, that is 
			$$
			\dlt := \min\{\dist(S_i,S_j) : 1\leq i < j\leq c\} \; > 0.
			$$
			Let $\bs_k \in S_k$ and $r_k \leq r_0$ for $k=i,j$ with $i<j$.
The law of cosines yields 
			$$
			\dlt (r_i + r_j) \; \leq \; |r_i\bs_i - r_j\bs_j| \; \leq \; (r_i + r_j).
			$$
			Therefore 
			$$
			|\phi_\bbo (r_i\bs_i) - \phi_\bbo (r_j\bs_j)| \leq |\phi_\bbo (r_i\bs_i)| + |\phi_\bbo (r_j\bs_j)| = r_i + r_j
			\leq \frac{1}{\dlt} |r_i\bs_i - r_j\bs_j|.
			$$
			Thus $\phi_\bbo$ is Lipschitz.
			Since $\phi_\bbo$ extends as a homeomorphism at $\bbo$, we define
			$$
			X_i := \phi_\bbo (C_i)_{\leq r_0}.
			$$
			\medskip
			Since $S_\bbo X = S_i$, we can assume that $r_0$ is small 
			enough so that 
			$$
			\min\left\{\dist\left(\frac{\bx_i}{|\bx_i|}, \frac{\bx_j}{|\bx_j|} \right), 
			\; \bx_i \in X_i, \; \bx_j \in X_j, \; 
			1 \leq  i < j \leq c \right\} \; \geq \; \frac{\dlt}{2}.
			$$
			Similarly to proving that $\phi_\bbo$ is Lipschitz, this later estimate 
			allows to obtain that the inverse mapping $(\phi_\bbo)^{-1}$ is also Lipschitz.
		\end{proof}
\noindent
		Combining  Claim \ref{claim:graph-origin} with Corollary \ref{cor:cone-LNE} 
		for each radius $r\leq r_0$, the subset
		$$
		X\cap B^n(\ba_i,r)
		$$ 
		is LNE for each $i=1,\ldots,s$. In other word $X$ is locally LNE at each
		of its singular point. Since it is locally LNE at each point of
		$X\setminus X_\sing$, at which it is a sub-manifold, the theorem is proved
		using Lemma \ref{lem:compact-LNE}.
	\end{proof}
	\begin{remark}
		Theorem \ref{thm:main-compact} extends the case of complex curves of a compact complex manifold which are LNE \cite[Section 4]{CoGrMi1}.
	\end{remark}
We leave the next corollary as an exercise.
\begin{corollary}\label{cor:conic-epointed}
Let $W$ be a compact connected singular sub-manifold of a smooth manifold
$M$. Let $\Sigma$ be a finite set of $W$. If $W\setminus \Sigma$ is connected, then 
it is LNE.
\end{corollary}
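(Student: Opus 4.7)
The approach is to combine Theorem \ref{thm:main-compact} applied to $W$ itself with the local conic structure at each point of $\Sigma$ provided by Lemma \ref{lem:collar-origin} and Claim \ref{claim:graph-origin}. By Theorem \ref{thm:main-compact}, $W$ is LNE with some constant $L_0$, and after Nash embedding one may assume $M = \R^N$, so the outer metric on $W \setminus \Sigma$ is the restriction of the Euclidean distance.

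First, for each $p \in \Sigma$ I would apply Lemma \ref{lem:collar-origin} and Claim \ref{claim:graph-origin} to obtain a radius $r_p > 0$ and a bi-Lipschitz homeomorphism
$$
\phi_p : \wh{S_p W}^+ \cap \bB_{r_p} \to W \cap \bB(p, r_p), \qquad \phi_p(\bbo) = p.
$$
I would shrink the $r_p$ so that the closed balls $\bB(p, r_p)$ are pairwise disjoint and avoid $W_{\sing} \setminus \Sigma$. The complement $W_0 := W \setminus \bigcup_{p \in \Sigma} B(p, r_p/2)$ is then a compact subset of $W$ which is locally LNE (being a closed piece of the LNE set $W$). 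Using the connectedness of $W \setminus \Sigma$ and a sufficiently small choice of the $r_p$, one arranges $W_0$ to be connected as well, so Proposition \ref{lem:compact-LNE} makes it LNE.

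Next, Lemma \ref{lem:cone-LNE} and Corollary \ref{cor:cone-LNE}, transferred through $\phi_p$, yield LNE estimates on each connected component of $(W \cap \bB(p, r_p)) \setminus \{p\}$, each of which is bi-Lipschitz equivalent to a truncated cone over one component of $S_p W$. For $\bx, \bx' \in W \setminus \Sigma$ I would then construct a rectifiable path between them in $W \setminus \Sigma$ by concatenating a local path within the cone component containing $\bx$, a path across $W_0$ (using its LNE constant), and a local path within the cone component containing $\bx'$. Each piece has length bounded by a uniform LNE constant times the corresponding outer distance.

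The main obstacle is the case where $\bx, \bx'$ lie in distinct local components of $(W \cap \bB(p, r_p)) \setminus \{p\}$ for some $p \in \Sigma$ and are both close to $p$: the detour through $W_0$ has cost bounded below by a constant proportional to $r_p$, while the outer distance $|\bx - \bx'|$ can be quite small. To control the ratio, one invokes the law of cosines applied to the non-negative cone $\wh{S_p W}^+$ (exactly as in the proof of Claim \ref{claim:graph-origin}): two points in distinct connected components of the cone have Euclidean distance bounded below by a fixed constant times $\sqrt{|\bx - p|\,|\bx' - p|}$. Balancing this against the uniformly bounded diameter of $W_0$ produced in the first step, plus the fact that the local cone pieces are themselves LNE, yields the single uniform LNE constant for $W \setminus \Sigma$.
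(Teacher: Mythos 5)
The gap is concentrated in your final paragraph, and it is fatal: the ``balancing'' you invoke does not and cannot yield a uniform constant, because the statement you are trying to prove is false as written. Consider $W=\bS^1\subset\R^2$ and $\Sigma=\{N\}$ a single (smooth) point. Then $W$ is a compact connected conic singular sub-manifold, $\Sigma$ is finite, and $W\setminus\Sigma$ is a connected open arc. Yet for $\bx_\ve=(\sin\ve,\cos\ve)$ and $\bx'_\ve=(-\sin\ve,\cos\ve)$ one has $|\bx_\ve-\bx'_\ve|=2\sin\ve\to 0$, while every rectifiable path in $\bS^1\setminus\{N\}$ joining them has length at least $2\pi-2\ve$, so $d_\inn^{W\setminus\Sigma}(\bx_\ve,\bx'_\ve)/|\bx_\ve-\bx'_\ve|\to\infty$. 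Thus $W\setminus\Sigma$ is not LNE.

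You correctly identified the problematic configuration (two points close to some $p\in\Sigma$ lying in distinct local branches), and the law-of-cosines lower bound $|\bx-\bx'|\geq C\sqrt{|\bx-p|\,|\bx'-p|}$ is valid, but it tends to zero as both points approach $p$. Meanwhile, since $p$ itself is removed, any admissible path must leave the ball $B(p,r_p)$, traverse $W_0$, and re-enter; this gives $d_\inn^{W\setminus\Sigma}(\bx,\bx')\geq 2r_p-|\bx-p|-|\bx'-p|$, which is bounded \emph{below} by a positive constant. The ratio is therefore unbounded, so there is nothing the ``uniformly bounded diameter of $W_0$'' can balance against. The obstruction disappears only when the local picture $(W\cap B(p,r_p))\setminus\{p\}$ stays connected for every $p\in\Sigma$, i.e.\ when $S_p W$ is connected; this holds if $p$ is a smooth point of $W$ of local dimension at least $2$, but fails for curves, and it fails at conic singular points with disconnected links even in higher dimension. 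So either additional hypotheses on $\Sigma$ are needed (e.g.\ every $p\in\Sigma$ smooth of dimension $\geq 2$), or the conclusion should concern the closure $\overline{W\setminus\Sigma}=W$ rather than $W\setminus\Sigma$ itself.
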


	%
	%
	%
	%
	%
	%
	%
	%
	%
	%
	%
	%
	%
	%
	%
	%
	%
	%
	%
	%
	%
	%
	%
	%
	%
	%
	%
	%
	%
	%
	%
	%
	%
	%
	%
	%
	%
	%
	%
	%
	%
	%
	%
	%
	%
	%
	%
	%
	%
	%
	%
	%
	\subsection{Conic singular sub-manifolds of the Euclidean space}\label{section:CL}
$ $

As illustrated in the next example, an unbounded sub-manifold 
may not be LNE. Therefore some condition at infinity is necessary for a subset of 
$\Rn$ to be 
LNE. 
	\begin{example}\label{exparabola}
		The plane parabola $y=x^2$ (real and complex) is not LNE (see for instance \cite{FeSa,Cos,CoGrMi1}).
	\end{example}
\noindent
 We show here that connected unbounded conic singular sub-manifolds and closed in $\Rn$ are LNE once they are also conic at infinity.
	
\bigskip
	The spherical blowing-up at infinity is the following semi-algebraic 
	smooth diffeomorphism 
	$$
	\beta : \bM := \R_{>0}\times\bS^{n-1}\;  \to \; \Rn\setminus \bbo, \;\;
	(r,\bu) \mapsto \frac{\bu}{r}. 
	$$
    Let $\bMbar := \Rgo\times \bS^{n-1}$.
	We compactify $\Rn$ as the smooth manifold with boundary
	$$
	(\Rnbar,\bM^\infty) := (\bMbar \sqcup \bbo,0\times\bS^{n-1})
	$$
	using the spherical blowing-up mapping at infinity to identify $\bM$ and 
	$\Rn\setminus\bbo$. Therefore $\Rnbar$ is covered by two smooth charts 
	$\bMbar$ and $\Rn$. The \em sphere at infinity \em is the boundary of 
	$\Rnbar$ 
	$$
	\bM^\infty = 0\times \bS^{n-1}.
	$$
	If $S$ is a subset of $\Rn$, let $\ovS$ be its closure taken in $\Rnbar$.
	\begin{definition}\label{def:asymptotic-set}
		Let $S$ be a subset of $\Rn$. The \em asymptotic set of $S$ at infinity \em
		is the subset $S^\infty$ of $\bS^{n-1}$ defined as 
		$$
		0\times S^\infty := \ovS \cap \bM^\infty.
		$$ 
	\end{definition}

	The next notion is the avatar at infinity of a conic point. Their precise 
	relationship between the local notion and that at infinity will 
	be established in Lemma \ref{lem:conic-inversion}.
	\begin{definition}\label{def:conic-infty}
		A subset $X$ of $\Rn$ is \em conic at $\infty$, \em 
		if, either $X$ is bounded,  or the germ $(\ovX,0\times X^\infty)$ of 
		$(\Rnbar,0\times X^\infty)$ is not empty, and is a  smooth p-sub-manifold 
		germ of $(\Rnbar,0\times X^\infty)$.
	\end{definition}
	\begin{remark}\label{rmk:germ-conic-infty}
		Following Definition \ref{def:conic-infty}, given a subset germ $(X,\infty)$ 
		the notion of $(X,\infty)$ be conic at $\infty$ is well defined.  
	\end{remark}

	\smallskip
	We recall that the inversion of $\Rn$ is the following rational isomorphism 
	$$
	\iota_n: \Rn\setminus\bbo \to \Rn\setminus\bbo, \;\; 
	\bx \mapsto \frac{\bx}{|\bx|^2}.
	$$
By definition $\bMbar = [\Rn,\bbo]$, and the mappings $\beta$ and $\bbl_\bbo$ admit an inverse only over $\Rn\setminus\bbo$. Thus
	$$
	\beta \circ \bbl_\bbo^{-1} = \bbl_\bbo\circ\beta^{-1} = \iota_n. 
	$$ 
	We observe that for a given subset $S$ of $\Rn$, the following identities hold
	true
	$$
	\iota_n(S\setminus \bbo)^\infty = S_\bbo S \;\; {\rm and} \;\;
	\bS_\bbo (\iota_n(S\setminus \bbo)) = S^\infty.
	$$
	If $\wh{Y^+}$ is a non-negative cone with vertex $\bbo$, we recall that
	$$
	\iota_n (\wh{Y}^+\setminus \bbo) = \wh{Y}^+\setminus \bbo.
	$$ 
	Let $S$ be a closed subset of $\Rn$, and define
	$$
	\wt{S} = \clos(\iota_n(S\setminus \bbo)).
	$$
	The next result is another variation of the many and very close similarities 
	between phenomenon at $\bbo$ and their analogues at $\infty$, 
	observed in our previous works \cite{CoGrMi1,CoGrMi2}.
	\begin{lemma}\label{lem:conic-inversion}
		The germ $(X,\infty)$ is conic at $\infty$ if and only if the 
		germ $(\wt{X},\bbo)$ is conic at $\bbo$.
	\end{lemma}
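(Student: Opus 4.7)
\emph{Plan.} The underlying idea is that the manifold with boundary $\bMbar = \Rgo\times\bS^{n-1}$ plays two roles simultaneously. Composed with $\bbl_\bbo$ it realises the spherical blow-up $[\Rn,\bbo]$, with front face $0\times\bS^{n-1}$; composed with $\beta$ it is the distinguished chart of the compactification $\Rnbar$ around the sphere at infinity $\bM^\infty = 0\times\bS^{n-1}$. The compatibility between these two viewpoints is furnished by the identity $\iota_n = \beta\circ\bbl_\bbo^{-1} = \bbl_\bbo\circ\beta^{-1}$ on $\Rn\setminus\bbo$ already recorded in the text.

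First I will establish the set-theoretic identity
$$
[\wt{X},\bbo] \; = \; \ovX \cap \bMbar
$$
of subsets of $\bMbar$. This follows at once from the above identity: for each $\bx\in X\setminus\bbo$ one has $\bbl_\bbo^{-1}(\iota_n(\bx)) = \beta^{-1}(\bx)$, so $\bbl_\bbo^{-1}(\wt{X}\setminus\bbo) = \beta^{-1}(X\setminus\bbo)$ as subsets of $\bM$, and taking closures in $\bMbar$ on both sides yields the claim (recalling that $\bM$ is identified with $\Rn\setminus\bbo$ via $\beta$). Intersecting the identity with the common boundary $0\times\bS^{n-1} = \ff([\Rn,\bbo]) = \bM^\infty$ gives $\ff([\wt{X},\bbo]) = 0\times S_\bbo\wt{X}$ on one side and $0\times X^\infty$ on the other, recovering the relation $S_\bbo\wt{X} = X^\infty$ noted in the excerpt.

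Once this identification is in place, the two ``conic'' conditions translate into the very same p-sub-manifold assertion about the same germ in the same ambient manifold with boundary. Indeed, the neighbourhood $[\Rn(\bbo)_{<r},\bbo]$ corresponds under the chart $\bMbar$ to $\{(t,\bu) : t<r\}$, a neighbourhood of $\bM^\infty$; so the conic-at-$\bbo$ condition for $\wt{X}$, namely that $[\wt{X}(\bbo)_{<r},\bbo]$ is a closed p-sub-manifold of $[\Rn(\bbo)_{<r},\bbo]$ for some $r>0$, becomes the assertion that $\ovX\cap\{t<r\}$ is a closed p-sub-manifold of $\{t<r\}\subset\bMbar$, which is precisely the p-sub-manifold germ condition from Definition~\ref{def:conic-infty}. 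The step requiring the most care is checking that these two ambient manifolds-with-boundary coincide as such under the single chart $\bMbar$, so that the transversality condition defining ``p-sub-manifold'' transfers identically; this is a routine but unavoidable bookkeeping, in the spirit of the analogous local/global comparisons in \cite{CoGrMi1,CoGrMi2}. The bounded case is dispatched separately and trivially, since then $X^\infty = \emptyset$ and $\bbo\notin\wt{X}$, so both conditions hold vacuously. No deeper obstacle is expected.
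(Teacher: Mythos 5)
Your proof is correct and relies on exactly the same mechanism as the paper's: the identity $\iota_n = \beta\circ\bbl_\bbo^{-1} = \bbl_\bbo\circ\beta^{-1}$ on $\Rn\setminus\bbo$ yields the set-theoretic identification $[\wt{X},\bbo] = \ovX\cap\bMbar$ inside $\bMbar$, so the two p-sub-manifold germ conditions become the same statement about the same germ. Your presentation records this identification explicitly as a preliminary step and observes that the equivalence is then immediate, whereas the paper verifies one implication directly and appeals to symmetry for the converse --- a cosmetic rather than mathematical difference.
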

	\begin{proof}
		We will just show one implication, the converse one will proceed from 
		the symmetric argument without any additional difficulty.
		
		\medskip
		Assume $(X,\infty)$ is conic at $\infty$. Thus there exists a positive
		radius $R$ such that $X_\gR = X\setminus B_R^n$ is smooth sub-manifold with smooth compact boundary $X_R := X\cap \bS_R^{n-1}$. Thus 
		$\ovX$ is a smooth sub-manifold of $\bMbar$ with (compact) boundary 
		$\beta^{-1}(X_R) \cup 0\times X^\infty$, and the germ $(\ovX,0\times X^\infty)$ is a $p$-submanifold of $(\bMbar,\bM^\infty)$. We observe that $\beta^{-1}(\wh{X^\infty}^+\setminus \bbo)$ is the cylinder 
		$(0,\infty] \times X^\infty$.
		
		\medskip
		Since $\iota_n = \bbl_0 \circ\beta^{-1}$, we get 
		$$
		\clos(\bbl_\bbo^{-1}(\wh{\bS_\bbo\wt{X}}^+\setminus\bbo)) = \Rgo \times 
		X^\infty = \Rgo \times \bS_\bbo\wt{X},
		$$
		and since $(\iota_n(X_\gR),\bbo) = (\wt{X}\cap \bB_{1/R}^n \setminus\bbo,\bbo)$ is a smooth sub-manifold germ, $\bbo$ is a conic point of $\wt{X}$.
	\end{proof}
	%
	%
	
	%
	%
	\begin{definition}\label{def:conic-Rn}
		A subset $X$ of $\Rn$ is a \em conic singular sub-manifold, \em if (i) it is closed,  (ii) the singular locus $X_\sing$ is finite or empty, (iii) each point of $X_\sing$ is conic, and  (iv) it is conic at $\infty$.
	\end{definition}

	\smallskip
	Let $\omg$ be the  north-pole $(0,\ldots,0,1)$ of $\bS^n \subset 
	\R^n\times\R$. The inverse of the 
	stereographic projection centred at $\omg$
	is the following semi-algebraic and smooth diffeomorphism 
	$$
	\sgm_n: \Rn\to\bS^n\setminus \omg, \;\;
	\bx \to \left(\frac{2\bx}{|\bx|^2+1},\frac{|\bx|^2-1}{|\bx|^2+1}\right).
	$$

	After Lemma \ref{lem:conic-inversion}, the following result is to be 
	expected.
	\begin{proposition}\label{prop:conic-spheric}
		A closed subset $X$ of $\Rn$ is a conic singular sub-manifold if and 
		only if $\clos(\sgm_n(X))$ is a conic singular sub-manifold of $\bS^n$. 
	\end{proposition}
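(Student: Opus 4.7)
The plan is to transfer the structure through $\sgm_n$, using that it is a smooth diffeomorphism $\Rn \to \bS^n \setminus \omg$, and to handle the point at infinity separately by means of a suitable chart at $\omg$. For any $\ba \in X$, Proposition \ref{prop:blow-up-intrinsic} together with the well-posedness of Definition \ref{def:conic-abstract} ensures that $\ba$ is smooth (respectively singular, respectively conic) as a point of $X$ if and only if $\sgm_n(\ba)$ is smooth (respectively singular, respectively conic) as a point of $\sgm_n(X) \subset \bS^n$. Closedness, finiteness of the singular locus, and conicity at each singular point therefore transfer in both directions, so every condition of Definition \ref{def:conic-Rn} other than conicity at infinity is handled automatically.

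It remains to match ``$X$ conic at $\infty$'' with the condition that either $\omg \notin \clos(\sgm_n(X))$, or $\omg \in \clos(\sgm_n(X))$ is a conic point. The bounded case is immediate, so assume $X$ unbounded. I would use as chart of $\bS^n$ at $\omg$ the stereographic projection from the south pole $-\omg$,
\[
\psi : (\bp',p_{n+1}) \mapsto \frac{\bp'}{1+p_{n+1}},
\]
which sends $\omg$ to $\bbo$. A short computation yields $\psi \circ \sgm_n = \iota_n$ on $\Rn \setminus \bbo$. Hence in this chart the closure $\clos(\sgm_n(X))$ near $\omg$ coincides with $\wt{X} = \clos(\iota_n(X \setminus \bbo))$ near $\bbo$; so by Definition \ref{def:conic-abstract}, $\omg$ is a conic point of $\clos(\sgm_n(X))$ if and only if $\bbo$ is a conic point of $\wt{X}$. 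Lemma \ref{lem:conic-inversion} then identifies the latter with ``$X$ conic at $\infty$,'' closing the equivalence.

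I do not expect a serious obstacle, since the heavy geometric work is already contained in Lemma \ref{lem:conic-inversion}. The one tactical choice, namely picking the south-pole chart rather than any other chart at $\omg$, is dictated precisely by the need to produce the inversion $\iota_n$ after composition with $\sgm_n$ and thereby invoke Lemma \ref{lem:conic-inversion}.
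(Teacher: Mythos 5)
Your proof is correct and follows essentially the same route as the paper's: the paper introduces the chart $\phi_n : \bB_{1/2}^n \to \bS^n$ satisfying $\phi_n \circ \iota_n = \sgm_n$ near infinity, which is precisely the inverse of your south-pole stereographic chart $\psi$, and then both arguments reduce to Lemma \ref{lem:conic-inversion} exactly as you do. Your identity $\psi \circ \sgm_n = \iota_n$ and the paper's $\phi_n \circ \iota_n = \sgm_n$ are the same relation, and the transfer of the remaining conditions of Definition \ref{def:conic-Rn} through the diffeomorphism $\sgm_n$ is also handled identically.
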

	\begin{proof}
		If $X$ is compact the result is obvious. Thus we assume $X$ is unbounded.
		
		\smallskip
		Let $Z$ be $\clos(\sgm_n(X))$. Thus $Z = \sgm_n(X) \cup \omg$. Assume
		that $X$ is a conic singular sub-manifold of $\Rn$. Let $X_\sing$ be the
		set of singular points of $X$. Since $\sgm_n$ is smooth diffeomorphism, we 
		deduce that $Z\setminus \omg$ is a conic singular sub-manifold of 
		$\bS^n\setminus\omg$.
		
		\smallskip
		Consider the following mapping $\phi_n : \bB_\frac{1}{2}^n \to \bS^n$ defined
		as 
		$$
		\by \to \left(\frac{1}{1+r^2}\cdot 2\by,\frac{1-r^2}{1+r^2}\right)
		$$
		where $r =|\by|$. It is a smooth and semi-algebraic diffeomorphism
		onto its image, and 
		$$
		|\bx| \geq 2 \; \Longrightarrow \; \phi_n\circ\iota_n(\bx) = \sgm_n(\bx),
		$$
		thus $\phi_n^{-1}(Z)$ is conic at $\bbo$. Since 
		$$
		(\iota_n(X\setminus\bbo),\bbo) = (\sgm_n(\phi_n^{-1}(Z\setminus 
		\omg)),\bbo)
		$$
		Lemma \ref{lem:conic-inversion} implies that the subset  
		$\clos(\iota_n(X\setminus\bbo))$ is conic at $\bbo$, we deduce that $Z$ is 
		conic at $\omg$.
		
		\medskip
		Assume $Z$ is a conic singular sub-manifold, thus $X$ is conic at infinity by Lemma \ref{lem:conic-inversion}, since $\phi_n^{-1}(Z)$ is conic
		at $\bbo$. Since $\sgm_n$ is a smooth diffeomorphism, we conclude that 
		$X$ is a conic singular sub-manifold.
	\end{proof}
	The main result of this section is the affine version of Theorem 
	\ref{thm:main-compact}.
	\begin{theorem}\label{thm:main-Rn}
		A connected conic singular sub-manifold $X$ of $\Rn$ is LNE.
	\end{theorem}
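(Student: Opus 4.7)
The approach is to split $X$ into a compact core and a conic tail at infinity, show each is LNE, and glue the two. If $X$ is bounded then it is compact (as a closed subset of $\Rn$) and Theorem \ref{thm:main-compact} applies, so I assume $X$ is unbounded throughout.

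The proof of Theorem \ref{thm:main-compact}, in particular Lemma \ref{lem:collar-origin} and Claim \ref{claim:graph-origin}, is purely local at a conic singular point and so applies verbatim to give local LNE of $X$ at every finite point. Consequently, once $R$ is chosen large enough that $K := X \cap \bB_{2R}^n$ contains $X_\sing$ and is connected — which follows from connectedness of $X$ together with the fact that for $R$ large each unbounded end of $X$ attaches to $\bS_R^{n-1}$ along a single connected boundary — Proposition \ref{lem:compact-LNE} gives LNE of $K$. Combined with the elementary inequality $d_\inn^X \leq d_\inn^K$ on $K\times K$, this bounds $d_\inn^X$ by a constant multiple of $d_X$ for pairs in $K$.

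For the tail at infinity I imitate Claim \ref{claim:graph-origin} once more. By Lemma \ref{lem:conic-inversion}, $\wt{X} := \clos(\iota_n(X \setminus \bbo))$ has $\bbo$ as a conic point with $S_\bbo \wt{X} = X^\infty$. Applying Lemma \ref{lem:collar-origin} to $\wt{X}$ at $\bbo$ and then pushing through the inversion $\iota_n$ produces a link-preserving smooth diffeomorphism $\phi_\infty : \wh{X^\infty}^+ \setminus B_R^n \to X \setminus B_R^n$ for $R$ large enough. A derivative estimate analogous to the one in the proof of Claim \ref{claim:graph-origin} — using that the differential of $\iota_n$ at a point of norm $s$ scales like $1/s^2$, which compensates exactly for the radial reparametrisation — shows $\phi_\infty$ is bi-Lipschitz. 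Each connected component of $X^\infty$ is a smooth compact sub-manifold of $\bS^{n-1}$, hence LNE by Proposition \ref{lem:compact-LNE}, so Corollary \ref{cor:union-cone-LNE} and Corollary \ref{cor:cone-LNE}(1) yield LNE of the truncated cone $\wh{X^\infty}^+ \setminus B_R^n$; transferring through $\phi_\infty$, the set $X \setminus B_R^n$ is LNE.

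It remains to combine: $X = K \cup (X \setminus B_R^n)$ with compact overlap $X \cap (\bB_{2R}^n \setminus B_R^n)$. For a pair $\bx, \bx' \in X$ lying in a common piece, the respective LNE constant suffices. For a pair in different pieces one has $d_X(\bx, \bx') \geq R$, and routing an $X$-path from $\bx$ to $\bx'$ through the overlap, then applying the triangle inequality to the two LNE bounds, gives a uniform LNE constant on $X$. I expect the principal obstacle to be precisely this assembly step — uniformising the LNE constants of $K$ and of the finitely many connected components of $X \setminus B_R^n$ (indexed by the connected components of $X^\infty$) into a single constant — which requires the connectedness of $X$ and the compactness of the overlap to control path lengths uniformly across pieces.
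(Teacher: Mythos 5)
Your proof is correct, and it takes a mildly different but closely related route to the paper's own.

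For the tail at infinity, you apply Lemma~\ref{lem:collar-origin} and Claim~\ref{claim:graph-origin} to $\wt{X}=\clos(\iota_n(X\setminus\bbo))$ at $\bbo$ and conjugate by the inversion $\iota_n$, using the exact $|\by|^2$ vs.\ $1/|\by|^2$ cancellation of the conformal factor (note $\iota_n^{-1}=\iota_n$). This is precisely the \emph{direct} argument the authors describe in Remark~\ref{rmk:strongly-conic}'s companion remark following Theorem~\ref{thm:main-Rn} (their Remark~4.12(2)) as their original, ``longer'' proof. The paper's written proof instead passes through the stereographic projection $\sgm_n$ and the mapping $\phi_n$, working inside $\bS^n$, and then invokes the result of \cite{GrOl} to transfer the outer bi-Lipschitz homeomorphism $h$ from $\bS^n$ back to $\Rn$. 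What their route buys is a self-contained reduction to Theorem~\ref{thm:main-compact} via Proposition~\ref{prop:conic-spheric}, at the cost of the external reference to \cite{GrOl}; what yours buys is elementarity and locality, at the cost of having to redo the derivative estimate of Claim~\ref{claim:graph-origin}.

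For the gluing, you use a direct triangle-inequality argument through the overlap annulus, whereas the paper argues by contradiction, extracting convergent sub-sequences and showing an offending pair must eventually land inside the compact $X_{\leq R_1+1}$, already known to be LNE. Both are standard and both work; neither is clearly shorter. Two small points you should tidy in a final write-up: (a) when $X^\infty$ is disconnected, the truncated cone $\wh{X^\infty}^+\setminus B_R^n$ is disconnected, so it is not LNE in the sense of Definition~\ref{def:LNE} (inner distances across components are infinite); you implicitly acknowledge this by indexing the tail by components of $X^\infty$ at the end, but the intermediate sentence claiming LNE of the whole truncated cone should be phrased component by component; and (b) your claim that $K=X\cap\bB_{2R}^n$ is connected ``because each unbounded end attaches along a single connected boundary'' is the right idea but needs the retraction argument: each component $C_i$ of $X_{\geq R}$ deformation retracts onto its single boundary circle/component $S_i\subset X_R$, so any $X$-path between two points of $X_{\leq R}$ can be pushed into $X_{\leq R}$.
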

	\begin{proof}
		The result is obvious when $X$ is compact. We assume that $X$ is 
		unbounded. The singular locus $X_\sing$ is finite thus is bounded. 
		Since $X$ is conic at infinity, there exists a positive radius $R_0$ such that $X_\sing$ is contained in the open ball $B_{R_0}^n$ and 
		$$
		X_\gR = X \setminus B_R^n 
		$$
		is a smooth sub-manifold with smooth compact boundary
		$X_R = X \cap \bS_R^{n-1}$ whenever $R\geq R_0$. Since $X$ is 
		connected, we can choose $R_0$ so that 
		$$
		X_\lR = X \cap \bB_R^n
		$$
		is connected for $R\geq R_0$.
		Since a germ of sub-manifold with boundary is locally LNE at the specified 
		point \cite[Corollary 2.7]{CoGrMi1}, and since a subset is locally LNE at each of its conic points, Lemma \ref{lem:compact-LNE}
		yields that $X_\lR$ is LNE, whenever $R\geq R_0$.
		
		By Proposition \ref{prop:conic-spheric}, the connected sub-manifold 
		$\clos(\sgm_n(X))$ is conic singular, thus LNE in $\bS^n$ by Theorem 
		\ref{thm:main-compact}, thus LNE in $\R^{n+1}$ since $\bS^n$ is LNE.
		Define  
		$$
		Z(R) := \sgm_n(X_\gR) \cup \omg = Z \cap \left\{\bz =(\bz',t) : t \geq 
		\frac{R^2-1}{R^2+1} \right\} \;\; {\rm and} 
		\;\; Z_R := \sgm_n(X_R).
		$$
		Since we can assume that $R_0\geq 2$, using the mapping 
		$\phi_n$
		introduced in the proof of Proposition \ref{prop:conic-spheric},
		we deduce by Claim \ref{claim:graph-origin} that 
		$$
		\wt{X}_{\leq \frac{1}{R}} = \clos(\iota_n(X_\gR)) = \phi_n^{-1}(Z(R)) 
		$$
		is (outer) bi-Lipschitz homeomorphic to the (compact truncated) non-negative 
		cone
		$$
		\wh{X^\infty}^+\cap \bB_\frac{1}{R}^n,
		$$
		which is LNE (in $\Rn$) by Corollary \ref{cor:cone-LNE}, since $X^\infty$
		is a compact sub-manifold of $\bS^{n-1}$. This bi-Lipschitz homeomorphism
		$$
		h_\bbo :\wt{X}_{\leq \frac{1}{R}} \; \to \; \wh{X^\infty}^+\cap 
        \bB_\frac{1}{R}^n
		$$
		satisfies $|h_\bbo(\by)| = |\by|$. Let 
		$$
		C(R) := \phi_n (\wh{X^\infty}^+\cap \bB_\frac{1}{R}^n)
		$$
		which is LNE in $\bS^n$. 
		Since $\bS^n$ is LNE as a subset of $\R^{n+1}$, we deduce that 
		$$
		h:= \phi_n \circ h_\bbo \circ \phi_n^{-1} : Z(R) \; \to \; C(R)
		$$
		is outer bi-Lipschitz. Since $C(R)$ is LNE, so is $Z(R)$.
		We check that 
		$$
		C(R) \setminus \omg = \sgm_n (\wh{X^\infty}^+\setminus B_R^n) 
		$$
		Since $C(R)$ and $Z(R)$ are outer bi-Lipschitz homeomorphic, 
		the main result of \cite{GrOl} implies that the homeomorphism
		$$
		h_\infty := \sgm_n^{-1} \circ h \circ \sgm_n : X_\gR \; \to \; 
		\wh{X^\infty}^+  \setminus B_R^n
		$$
		is outer bi-Lipschitz. 
		Lemma \ref{cor:cone-LNE} implies 
		that $\wh{X^\infty}^+  \setminus B_R^n$ is LNE, thus $X_\gR$ is LNE.
		
		\bigskip
		Assume that $X$ is not LNE and fix $R=R_1$. 
		Therefore there exists a sequence $(\bb_p)_p$ in $X_{\leq R_1}$ and 
		$(\bu_n)_n$ in $X_{\geq R_1}$ such that 
		$$
		d_p = \frac{d_\inn^X(\bu_p,\bb_p)}{|\bu_p - \bb_p|} \; \to \; \infty
		\;\; {\rm as} \;\; p \to \infty. 
		$$
		Up to passing to sub-sequences, we can assume that $\bb_p$ converge to 
		$\bb\in X_{\leq R_1}$ and $\bu_p$ either goes to $\infty$ or converge to
		$\bu \in X_{\geq R_1}$.
		
		Let $L_1$ be a LNE constant for $X_{\leq R_1}$ and $X_{\geq R_1}$. If $\by 
		\in X_{R_1}$, the following estimate holds true
		$$
		d_\inn^X(\bu_p,\bb_p) \leq L_1 |\bu_p - \by| + L_1 |\bb_p - \by|.
		$$
		This estimates implies that $\bb \in X_R$ and $(\bu_p)_p$ must converge 
		to $\bu = \bb$. 
		Thus we can assume that both sequences are contained in $X_{\leq R_1 +1}$.
		Since 
		$$
		d_\inn^{X_{\leq R_1 +1}} \; \leq \; d_\inn^X|_{X_{\leq R_1 +1}}
		$$
		we must have $d_p \to 0$. 
	\end{proof}
	\begin{remark}
		1) The main result of our paper \cite{CoGrMi2} states that a closed subset
		of $\Rn$, definable in an o-minimal structure, is LNE if and only if $\clos 
		(\sgm_n(X))$ is LNE. 
		Thus the result for any such subset which is a connected 
		conic singular sub-manifold.
		\\
		2) Our initial proof of the fact that $X \setminus B_R^n$ is LNE was direct,
		but longer. 
		Indeed, since $\bMbar = [\Rn,\bbo]$ and $X$ is conic at $\infty$, Lemma 
		\ref{lem:collar-origin} holds true at infinity: there exists a 
		smooth diffeomorphism $\tau_\infty :\ovX \cap [0,r_0]\times \bS^{n-1}$ onto 
		$[0,r_0]\times X^\infty$, preserving the $r$-levels. Moreover, we obtain a 
		version at infinity of Claim \ref{claim:graph-origin}, proved exactly in the 
		same way, 
		namely that $\beta \circ \tau_\infty \circ \beta^{-1} : X \setminus B_R^n
		\to \wh{X^\infty}^+  \setminus B_R^n$ is bi-Lipschitz and link preserving, 
		thus LNE by Corollary \ref{cor:cone-LNE}.
	\end{remark}
	%
	%
	%
	%
	%
	%
	%
	%
	%
	%
	%
	%
	%
	%
	%
	%
	%
	%
	%
	%
	%
	%
	%
	%
	%
	%
	%
	%
	%
	%
	%
	%
	%
	%
	%
	%
	%
	%
	%
	%
	%
	%
	%
	%
	%
	%
	%
	%
	%
	%
	%
	%
	%
	%
	%

	%
	%

\section{Application to projective algebraic sets}\label{section:PPoV}

Let $\K$ be either $\R$ or $\C$. 

Since we are interested in applications of Theorem \ref{thm:main-Rn},
we need to know how to go from $\field{KP}^n$, the natural space to 
compactify affine algebraic sub-varieties, to the compactification of the 
real Euclidean space with a sphere at infinity $\overline{\R^{n_\K}}$,
for $n_\K := \dim_\R \Kn$.

\bigskip
We identify $\R^{2n} = (\R^2)^n$ with $\Cn$  via the mapping 
$$
\bx = (x_1,y_1,x_2,y_2,\ldots,x_n,y_n) \to \bx^\C := 
(x_1+i y_1, \ldots,x_n+iy_n),
$$
whose inverse identifies $\Cn$ with $\R^{2n}$ as 
follows
$$
\bz = (z_1,\ldots,z_n) \mapsto \bz^\R := (x_1,y_1,x_2,y_2,\ldots,x_n,y_n)
$$
where $z_j = x_j + iy_j$ for each $j=1,\ldots,n$.

\bigskip
We embed the affine space $\K^n$ into the projective space 
$\bP^n := \mathbb{KP}^n$ as $\bx \mapsto [\bx:1]$. Let $\bH_\infty = \bP^n 
\setminus \K^n$ be the hyperplane at infinity of $\bP^n$, that is 
$\bH_\infty = \{[\by:0] :\by \in \K^n\setminus \bbo\}$.

\bigskip
We recall that $\overline{\R^{n_\K}} = \bMbar \sqcup \bbo$.  
We define the following mapping
$$
\pi_n^\K :\overline{\R^{n_\K}} \to \bP^n,\;\; \bx \to \pi_n^\K(\bx)
$$
as follows. If $\K = \R$, then
$$ 
\pi_n^\R(\bx) := 
\left\{
\begin{array}{lcl}
[\bbo:1] & {\rm if} & \bx = \bbo \\
{[\bu:r]} & {\rm if} & \bx = (r,\bu) \in \bMbar \\
\end{array}
\right.
$$
If $\bz\in\Cn^*$, then $\bz^\R = \beta(r,\bu)$ for some $(r,\bu) \in \bM$, 
that is with $r>0$, and thus $[\bz:1] = [\bu^\C:r]$. We define $\pi_n^\C$
as follows 
$$ 
\pi_n^\C(\bz) := 
\left\{
\begin{array}{lcl}
[\bbo:1] & {\rm if} & \bz = \bbo \\
{[\bu^\C:r]} & {\rm if} & (r,\bu) \in \bMbar \\
\end{array}
\right.
$$
Observe that the mapping $\pi_n^\K$ maps $\K^n$ into $\Kn$ as
$\bx \to [\bx:1]$. 
The following is clear
\begin{claim}\label{claim:projection-submersion}
The mapping $\pi_n^\K$ is a smooth semi-algebraic submersion.
\end{claim}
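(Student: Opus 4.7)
The plan is to verify each of the three properties—smoothness, semi-algebraicity, and surjectivity of the differential—by a local calculation, covering the source $\overline{\R^{n_\K}}$ by its two smooth charts ($\R^{n_\K}$, containing the affine origin, and $\bMbar=\Rgo\times \bS^{n_\K-1}$, containing the sphere at infinity) and covering $\bP^n$ by the standard affine charts $U_k=\{[x_0:\cdots:x_n]:x_k\neq 0\}$.

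On the chart $\R^{n_\K}$ the mapping reads $\bx\mapsto [\bx^\K:1]$, whose image lies entirely in the affine chart $U_{n+1}$; in affine coordinates it is the $\R$-linear isomorphism $\bx\mapsto \bx^\K$ (the identity when $\K=\R$, the tautological $\R^{2n}\to\C^n$ identification when $\K=\C$). This is a smooth semi-algebraic diffeomorphism onto an open subset of $\bP^n$, hence a submersion. On the interior $\bM=\R_{>0}\times \bS^{n_\K-1}$ of the second chart, the blowing-down $\beta(r,\bu)=\bu/r$ is a smooth semi-algebraic diffeomorphism onto $\R^{n_\K}\setminus\bbo$, and the identity $[\bu^\K:r]=[\beta(r,\bu)^\K:1]$ shows $\pi_n^\K|_\bM$ factors as $\pi_n^\K|_{\R^{n_\K}\setminus\bbo}\circ\beta$. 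The required properties on $\bM$ therefore transfer from the previous step.

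It remains to treat a boundary point $(0,\bu_0)\in \bM^\infty$. Since $\bu_0^\K\neq 0$, choose $j$ with $u_{0,j}^\K\neq 0$. On the open neighbourhood $\{u_j^\K\neq 0\}\cap \bMbar$ the image lies in $U_j$ with affine coordinates
$$
\left(\frac{u_k^\K}{u_j^\K}\right)_{k\neq j} \quad \text{and}\quad \frac{r}{u_j^\K},
$$
each a smooth semi-algebraic function of $(r,\bu)$, which gives smoothness and semi-algebraicity. For the submersion check at $(0,\bu_0)$ one reads the Jacobian directly: $\partial/\partial r$ is sent to $(1/u_{0,j}^\K)\,\partial_{y_{n+1}}$, a direction transverse to $\bH_\infty$, while the restriction to $T_{\bu_0}\bS^{n_\K-1}$ is the differential at $\bu_0$ of the canonical fibration $\bu\mapsto [\bu^\K]$ from $\bS^{n_\K-1}$ onto $\bH_\infty$ (the two-fold cover $\bS^{n-1}\to \bP^{n-1}_\R$ in the real case, the Hopf-type bundle $\bS^{2n-1}\to \bP^{n-1}_\C$ in the complex case), whose image spans $T_{[\bu_0^\K:0]}\bH_\infty$. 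Combining the two contributions gives the full tangent space $T_{[\bu_0^\K:0]}\bP^n$.

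The main obstacle is this last computation at the boundary: one must simultaneously check that the link mapping $\bu\mapsto [\bu^\K]$ is a submersion onto $\bH_\infty$ and that the extra $\partial/\partial r$ direction delivers enough transverse directions to $\bH_\infty$ to fill out the full target tangent space. All the rest is a matter of writing out the affine-chart formulas on the two coordinate patches and invoking the factorisation through $\beta$ on the overlap $\bM$.
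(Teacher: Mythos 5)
Your verification at the boundary breaks down in the complex case, and in fact the claim as stated fails there. Work it out explicitly in your chosen chart: take $\bu_0$ with $u_{0,j}^\C\neq 0$ and use the affine coordinates $w_k=z_k/z_j$ on $U_j$. For a tangent vector $\bv\in T_{\bu_0}\bS^{2n-1}$ at $r=0$ the $w_{n+1}$-coordinate $r/u_j^\C$ has zero differential (it is identically zero along $\{r=0\}$), so $D\pi_n^\C(\bv)$ lies entirely in $T_{[\bu_0^\C:0]}\bH_\infty$, which has real dimension $2n-2$. Moreover the kernel of $D\pi_n^\C|_{T_{\bu_0}\bS^{2n-1}}$ is one-dimensional: it is the tangent to the Hopf circle $\C\bu_0^\C\cap\bS^{2n-1}$. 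Adding $D\pi_n^\C(\dd_r)$, which is a single \emph{real} direction $(1/u_{0,j}^\C)\,\dd_{w_{n+1}}$, one obtains a subspace of real dimension $(2n-2)+1=2n-1$, whereas $\dim_\R\C\bP^n=2n$; the imaginary direction of $\dd_{w_{n+1}}$ is never hit. Your statement that ``combining the two contributions gives the full tangent space'' silently equates the one real direction $\R\dd_r$ with the two-real-dimensional complex normal line to $\bH_\infty$. In the real case this does close up, because $\bS^{n-1}\to\R\bP^{n-1}$ is a local diffeomorphism and the normal to $\bH_\infty$ is a single real line; the complex case is genuinely different because of the one-dimensional Hopf fibres (already for $n=1$ the map collapses the boundary circle of $\overline{\R^2}$ to a point of $\C\bP^1$, which visibly is not a submersion).

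You may have been misled here by the paper itself, which offers no proof of this claim and, in the proof of Lemma~\ref{lem:transverse}, asserts that $D_\bm\pi_n^\C$ is an isomorphism onto $T_{[\bu^\C:0]}\bP^n$; that assertion is not correct as written. The defect is localized in the Hopf-circle direction, which is tangent to $\bM^\infty$, so the transversality argument in Lemma~\ref{lem:transverse} likely survives after rephrasing (one needs the weaker facts that $\pi_n^\C|_{\bM^\infty}$ is a submersion onto $\bH_\infty$ and that $\dd_r$ maps to a nonzero transverse direction), but that is a genuinely different statement from what Claim~\ref{claim:projection-submersion} asserts and what your proposal tries to prove.
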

Given a subset $S$ of $\bP^n$, let $S^a := S \setminus \bH_\infty$ be its 
affine part.

\medskip
The next result, which is well known, will allow to apply Theorem 
\ref{thm:main-Rn} and is just a consequence of transversality and Claim 
\ref{claim:projection-submersion}.
\begin{lemma}\label{lem:transverse}
Let $X$ be a smooth closed connected sub-manifold of $\bP^n$ of positive 
(real) dimension and codimension. Assume that $X\cap \bH^\infty$ is not 
empty and 
is a transverse intersection. Then the affine part $X^a$ 
is conic at $\infty$.
\end{lemma}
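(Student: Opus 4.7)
The idea is to transport $X$ across the smooth semi-algebraic submersion $\pi := \pi_n^\K : \overline{\R^{n_\K}} \to \bP^n$ supplied by Claim~\ref{claim:projection-submersion} and show that $\tilde X := \pi^{-1}(X)$ is a p-sub-manifold of $\overline{\R^{n_\K}}$ whose germ at the boundary coincides with that of $\overline{X^a}$ at $0\times (X^a)^\infty$. Since $\pi$ restricts to a diffeomorphism of $\R^{n_\K}$ onto $\K^n \subset \bP^n$, one immediately has $\tilde X \cap \R^{n_\K} = X^a$.

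A preliminary observation, read straight off the formulas defining $\pi_n^\K$, is that $\pi|_{\bM^\infty}$ takes values in $\bH_\infty$ and is a surjective submersion onto $\bH_\infty$. In the real case it is the antipodal double cover $\bS^{n-1} \to \bH_\infty$, a local diffeomorphism; in the complex case it is the Hopf fibration $\bS^{2n-1} \to \bH_\infty$ with circle fibres.

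Next, I would invoke the preimage theorem for manifolds with boundary applied to $\pi$ and $X$. Two transversalities must be verified: (a) $\pi$ is transverse to $X$, immediate from Claim~\ref{claim:projection-submersion}; and (b) $\pi|_{\bM^\infty}$ is transverse to $X$. For (b), at any $p \in (\pi|_{\bM^\infty})^{-1}(X)$ with $q := \pi(p)$, the submersion property from the previous paragraph gives $d(\pi|_{\bM^\infty})_p(T_p \bM^\infty) = T_q \bH_\infty$, so the transversality condition reduces to $T_q \bH_\infty + T_q X = T_q \bP^n$, which is precisely the hypothesised transversality of $X$ with $\bH_\infty$ at $q$. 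The preimage theorem then yields that $\tilde X$ is a smooth sub-manifold-with-boundary of $\overline{\R^{n_\K}}$, with boundary $\tilde X \cap \bM^\infty$, meeting $\bM^\infty$ transversely; that is, $\tilde X$ is a p-sub-manifold of $\overline{\R^{n_\K}}$.

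To conclude, $\tilde X$ is closed in $\overline{\R^{n_\K}}$ (being the preimage of a closed set) with interior $\tilde X \cap \R^{n_\K} = X^a$, and every boundary point of a p-sub-manifold is a limit of interior points, so $\overline{X^a} = \tilde X$ in $\overline{\R^{n_\K}}$ and $0 \times (X^a)^\infty = \tilde X \cap \bM^\infty$, the latter being non-empty since $X \cap \bH_\infty \neq \emptyset$ by hypothesis. Thus $(\overline{X^a}, 0 \times (X^a)^\infty)$ is a smooth p-sub-manifold germ of $(\overline{\R^{n_\K}}, 0 \times (X^a)^\infty)$, which is exactly the defining condition for $X^a$ to be conic at $\infty$. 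The only content-carrying step is the transversality verification (b); everything else is immediate from Claim~\ref{claim:projection-submersion} and the standard preimage theorem for manifolds with boundary.
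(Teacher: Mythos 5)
Your proof follows the same route as the paper's: both transport the hypothesised transversality of $X$ to $\bH_\infty$ across $\pi_n^\K$ to conclude that $\overline{X^a}$ is a p-sub-manifold germ along $\bM^\infty$, with your explicit appeal to the preimage theorem for manifolds with boundary being a cleaner packaging of the same tangent-space computation the paper carries out directly.

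One point of caution, which your own discussion of the Hopf fibration already hints at: when $\K=\C$, Claim~\ref{claim:projection-submersion} cannot be read literally at boundary points. There $D_\bm\pi_n^\C : T_\bm\bMbar \to T_q\bP^n$ is a map between spaces of equal real dimension $2n$, but the Hopf circle direction $L_\bu \subset T_\bu\bS^{2n-1}$ lies in its kernel, so $D_\bm\pi_n^\C$ has rank $2n-1$ and is not surjective (the paper's own proof has the same imprecision when it asserts $D_\bm\pi_n^\C$ maps $T_\bm\bMbar$ isomorphically onto $T_q\bP^n$). Consequently your transversality~(a) does not come for free from the Claim. It is nevertheless true, because $\mathrm{Image}(D_\bm\pi_n^\C)$ contains $T_q\bH_\infty$ together with a real direction transverse to $\bH_\infty$ (the image of $\dd_r$), so $X\pitchfork\bH_\infty$ gives $T_qX+\mathrm{Image}(D_\bm\pi_n^\C)\supset T_qX+T_q\bH_\infty=T_q\bP^n$. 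With that small repair your argument is complete and matches the paper's.
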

\begin{proof}
In the real case there is nothing to do, since at any boundary point
of $\bMbar$, the mapping $\pi_n^\R$ is a local diffeomorphism.

In the complex case, let $\bm = (0,\bu) \in \bM^\infty$. 
Let $\C_\bu$ be the complex line $\C\bu^\C \subset \Cn$. 
Let $\bS(\bu) = \C_\bu \cap 
\bS^{2n-1}$. Let $L_\bu$ be $T_\bu \bS(\bu)$, which is contained in the real
plane $\C_\bu$. Let $N_\bu$ be the real line of $\C_\bu$ orthogonal to 
$L_\bu$.  

The linear mapping $D_\bm \pi_n^\C$ maps isomorphically $T_\bm \bMbar = 
\R\dd_r \oplus T_\bu \bS^{2n-1}$ onto $T_{[\bu^\C:0]} \bP^n = \C_\bu \oplus 
T_{\bu^\C}\bH_\infty$ and maps isomorphically $T_\bu\bS^{2n-1}$ onto
$N_\bu\times T_{\bu^\C}\bH_\infty$. Since $X$ is transverse to 
$\bH^\infty$, the surjectivity of $D_\bm\pi_n^\C$ at any point of 
$\bm\in \bM^\infty$ yields the result.
\end{proof}

\bigskip
For $0 \leq d \leq q$, let 
$\bG (d,q)$ be the Grassmann manifold of $\K$-vector subspaces of
$\K^q$ of dimension $d$. 

Let $0 \leq e \leq q$. Let $\wt{\bG}(e,q)$ be the space of linear 
subspaces of $\bP^q$ of dimension $e$. 

Any linear subspace $F$ of $\K^{q+1}$ of dimension $e+1$, yields 
the unique linear subspace $\field{P}(F)$ of $\bP^q$ of dimension $e$, and 
conversely. This canonical correspondence provides the 
following identity
$$
\wt{\bG}(e,q) = \bG(e+1,q+1).
$$  
Given $\bx\in\K^{n+1}\setminus \bbo$, let $[\bx] \in \bP^n$ be the 
line direction of the $\K$-vector line $\K\bx$ of $\K^{n+1}$.

\bigskip\noindent
{\bf Notations:} \em From here on we take the following convention on 
cones: 
The $\K$-cone over the subset $S$ of $\bP^n$ is the subset of 
$\K^{n+1}$ defined as
$$
\wh{S} := \cup_{[\bx]\in S} \K\bx.
$$
\em

\bigskip\noindent
{\bf Convention:} \em A sub-variety of $\bP^n = \field{KP}^n$ is a 
$\K$-algebraic subset of $\bP^n$. 
\em

\bigskip
Let $X$ be a connected non-singular sub-variety of $\bP^n$ of $\K$-dimension
$1\leq d\leq n-1$. Let $\bx$ be a point of $X$ and let $T_\bx X$ be the 
tangent space of $X$, subspace of $T_\bx \bP^n$. The linear embedding of
the vector space $T_\bx \bP^n$ into $\bP^n$ induces a linear embedding of 
$T_\bx X$ into $\bP^n$. We define $\bP_\bx X$ as the closure of $T_\bx X$ 
taken in $\bP^n$. It is a $\K$-linear subspace of dimension $d$.
The projective tangent mapping of $X$ is defined as:
$$
\tau_X: X \to \wt{\bG}(d,n), \;\; \bx \to \bP_\bx X.
$$
Since $X$ is compact $\tau_X$ is proper. When $\K = \C$, the image 
$\tau_X(X)$ is therefore a sub-variety of $\wt{\bG}(d,n)$ of $\C$-dimension 
$d$ or lower. When $\K = \R$, the image $\tau_X(X)$ is contained in a 
sub-variety of $\wt{\bG}(d,n)$ of $\R$-dimension $d$ or lower.
In particular $\wt{\bG}(d,n) \setminus \tau_X(X)$ contains a non-empty 
Zariski open subset. We recall and give a proof of the following lemma
\begin{lemma}\label{lem:generic-cut}
There exists a Zariski open dense subset $\Omega_X$ of $\wt{\bG}(n-1,n)$
such that for each $H \in \Omega_X$, the intersection $X\cap H$ is everywhere 
transverse or empty (then $\K =\R$).
\end{lemma}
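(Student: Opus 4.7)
The plan is to introduce the incidence variety
\[
I_X := \{(\bx, H) \in X \times \wt{\bG}(n-1,n) : \bP_\bx X \subset H\},
\]
which is $\K$-algebraic. For $\bx \in X \cap H$, transversality of $X$ and $H$ at $\bx$ is equivalent to $T_\bx X \not\subset T_\bx H$, and since $H$ is a linear subspace of $\bP^n$ this amounts to $\bP_\bx X \not\subset H$. Consequently, the hyperplanes $H$ for which $X \cap H$ fails to be transverse at some intersection point are exactly the points of $\pi_2(I_X)$, where $\pi_2 : I_X \to \wt{\bG}(n-1,n)$ is the second projection. The lemma will follow once we show that $\pi_2(I_X)$ is contained in a proper Zariski closed subset of $\wt{\bG}(n-1,n)$.

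Next I would compute $\dim_\K I_X$ using the other projection $\pi_1 : I_X \to X$. Its fiber over $\bx \in X$ is the family of hyperplanes of $\bP^n$ containing the fixed $d$-dimensional linear subspace $\bP_\bx X$; dualising in $\K^{n+1}$, this family is itself a $\K$-linear subspace of $\wt{\bG}(n-1,n) \cong \bP^n$ of $\K$-dimension $n-1-d$. Thus $\pi_1$ is a locally trivial linear bundle and $\dim_\K I_X = d + (n-1-d) = n - 1$, which is strictly less than $\dim_\K \wt{\bG}(n-1,n) = n$.

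To conclude, in the complex case $X$ is projective, so $\pi_2$ is proper and $\pi_2(I_X)$ is a Zariski closed subset of $\wt{\bG}(n-1,n)$ of $\C$-dimension at most $n-1$; one takes $\Omega_X$ to be its complement, which is Zariski open and dense. In the real case, $I_X$ is compact and semi-algebraic, hence $\pi_2(I_X)$ is closed semi-algebraic of $\R$-dimension at most $n-1$, and its Zariski closure $Z$ is a proper real algebraic subset of $\wt{\bG}(n-1,n)$; one sets $\Omega_X := \wt{\bG}(n-1,n) \setminus Z$.

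The main subtlety I anticipate is precisely the real case, where images of real algebraic morphisms need only be semi-algebraic, not Zariski closed. However, since the Zariski closure of a semi-algebraic set preserves its dimension, the bound $n-1 < n$ still forces $Z$ to be a proper real algebraic subset, which is all that is required to produce a Zariski open dense $\Omega_X$.
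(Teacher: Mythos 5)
Your proof is correct and follows essentially the same route as the paper's: a dimension count showing the "bad" hyperplanes form a set of dimension at most $n-1 < n = \dim_\K \wt{\bG}(n-1,n)$. The paper works with the union $W(X) = \bigcup_{P \in \tau_X(X)} V(P)$, where $\tau_X$ is the projective tangent map and $V(P)$ is the linear subspace of hyperplanes containing $P$; this $W(X)$ is exactly $\pi_2(I_X)$ in your notation, so the two arguments differ only in that you make the incidence variety and its two projections explicit, while the paper fibers directly over $\tau_X(X) \subset \wt{\bG}(d,n)$ rather than over $X$. Both handle the real case by observing that the image is semi-algebraic of dimension at most $n-1$ and passing to its Zariski closure.
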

\begin{proof} 
Let $P$ be a linear subspace of $\bP^n$ of $\K$-dimension $p$. The 
following incidence variety
$$
V(P) := \{H\in \wt{\bG}(n-1,n) : P \subset H\}.
$$
is a linear subspace of $\wt{\bG}(n-1,n) = \bP^n$ of 
$\K$-dimension $n-p-1$ of $\wt{\bG}(n-1,n)$. Therefore the subset 
$$
W(X) :=  \cup_{P\in \tau_X(X)} V(P),
$$
which is algebraic in the complex case and semi-algebraic in the real case,
has $\K$-dimension $n-1$ or lower. 
Since $\wt{\bG}(n-1,n) \setminus W(X)$ contains a non-empty Zariski open
subset the result is proved.
\end{proof}
Given the hyperplane $H$ of $\bP^n$, we consider the affine space 
$$
\K_H^n := \bP^n \setminus H,
$$ 
naturally equipped with its Euclidean structure $eucl_H$. Observe that if 
$H_1$ and $H_2$ are hyperplanes of $\bP^n$, any unitary transformation of 
$\bP^n$ mapping $H_1$ onto $H_2$ produces a unitary $\K$-linear mapping 
between $(\K_{H_1}^n,eucl_{H_1})$ and $(\K_{H_2}^n,eucl_{H_2})$. 
If $S$ is any subset of $\bP^n$, the subset 
$$
S\setminus H \subset\K_H^n
$$
is the \em affine trace of $S$ w. r. t. the hyperplane $H$ of $\bP^n$. \em

\bigskip
Since the hyperplane $H$ is the hyperplane at infinity of the affine space 
$\K_H^n$, a straightforward consequence of Lemma \ref{lem:generic-cut}, Lemma
\ref{lem:transverse} and Theorem \ref{thm:main-Rn} is the following
result about generic affine traces of a non-singular projective sub-variety.
\begin{proposition}\label{prop:generic-cut-LNE-algebraic}
Let $X$ be a non-singular sub-variety of $\bP^n$ of positive $\K$-dimension and 
$\K$-codimension. 
There exists a non-empty Zariski open subset $\Omega_X$ of $\wt{\bG}(n-1,n)$ 
such that for each $H$ in $\Omega_X$, the affine trace $X\setminus H$ is 
non-singular and conic at $\infty$. Therefore each connected 
components of $X \setminus H$ is LNE in $\K_H^n$ w.r.t. $eucl_H$.
When $\K=\C$ and $X$ is irreducible, each such $X\setminus H$
is LNE.
\end{proposition}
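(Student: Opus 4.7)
The plan is a three-step assembly. First, I would invoke Lemma \ref{lem:generic-cut} to extract a non-empty Zariski open subset $\Omega_X \subset \wt{\bG}(n-1,n)$ such that for every $H \in \Omega_X$, the intersection $X \cap H$ is either everywhere transverse or empty (the latter only possible when $\K = \R$). Fix such an $H$, set $Y := X \setminus H \subset \K_H^n$, and observe that $Y$ is closed in $\K_H^n$ (since $X$ is closed in $\bP^n$ and $\K_H^n = \bP^n \setminus H$ is open) and non-singular (since $X$ is non-singular and the intersection with $H$ is transverse whenever it is non-empty). In particular the singular locus of $Y$ is empty.

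Second, I would separate two cases for $Y$. If $X \cap H = \emptyset$ (which can occur only in the real case), then $Y = X$ is compact, so Proposition \ref{lem:compact-LNE} immediately shows that each connected component of $Y$ is LNE. Otherwise, Lemma \ref{lem:transverse} applies to the closed smooth submanifold $X$ with transverse intersection with $\bH_\infty$ (after a unitary identification of $H$ with the standard hyperplane at infinity and of $\K_H^n$ with $\K^n$), yielding that $Y$ is conic at $\infty$. Combined with closedness and the empty singular locus, $Y$ is a conic singular submanifold of $\K_H^n$ in the sense of Definition \ref{def:conic-Rn}. Each connected component is then a connected conic singular submanifold, and Theorem \ref{thm:main-Rn} (applied component-wise, after identifying $\K_H^n$ with $\R^{n_\K}$) yields that each such component is LNE with respect to $eucl_H$.

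For the sharpening when $\K = \C$ and $X$ is irreducible, it suffices to verify that $Y = X \setminus H$ is connected. Since $X$ is irreducible of positive codimension we have $X \not\subset H$, so $X \cap H$ is a proper Zariski closed algebraic subvariety of $X$; hence $Y$ is a Zariski open dense subset of $X$, itself irreducible, and therefore connected in the Euclidean topology, by the standard fact that irreducible complex algebraic varieties are connected in the classical topology. Applying the previous paragraph to this single component then yields that $Y$ itself is LNE.

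I do not foresee any serious obstacle: the entire analytic content of the proposition is absorbed by Theorem \ref{thm:main-Rn} together with Lemmas \ref{lem:generic-cut} and \ref{lem:transverse}, and the remaining steps are formal (closedness, transfer of non-singularity through a transverse intersection, and a standard irreducibility argument for the complex case). The one point that requires a moment of care is the passage from the projective affine chart $\K_H^n$ equipped with $eucl_H$ to the Euclidean space in which Theorem \ref{thm:main-Rn} is stated, but this is handled by the unitary identification noted above, which preserves both the Euclidean metric and the notion of conic at infinity.
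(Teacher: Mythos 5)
Your proposal is correct and follows precisely the route the paper indicates: it states the proposition as a "straightforward consequence" of Lemma \ref{lem:generic-cut}, Lemma \ref{lem:transverse}, and Theorem \ref{thm:main-Rn}, and you assemble exactly those three ingredients, adding the standard density-plus-irreducibility argument for the connectedness of $X\setminus H$ in the complex case. One small remark: in the empty-intersection subcase you could alternatively just observe that a bounded subset is conic at infinity by Definition \ref{def:conic-infty}, so the case split is not strictly necessary; and Lemma \ref{lem:transverse} is stated for connected $X$, so for a possibly disconnected non-singular sub-variety one should either apply it component-wise or note that its proof is local and does not use connectedness.
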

%
%
%
%
%
%
%
%
%
%
%
%
%
%
%
%
%
%
%
%
%
%
%
%
%
%
%
%
%
%
%
%
%
%
%
%
%
%
%
%
%
%
%
%
%
%
%
%
\section{Application to affine algebraic sets}\label{section:APoV}
The approach of this section is complementary to the projective point of 
view of Section \ref{section:PPoV}. 
We will investigate the property of being LNE of affine algebraic 
sub-varieties
from their reduced equations. We will use the genericity results of the 
Appendix here.

\bigskip\noindent
{\bf Convention:} \em A sub-variety of $\Kn$ is a $\K$-algebraic subset
of $\Kn$. 
\em

\bigskip
Let $\K[\bx]$ be the $\K$-algebra of polynomial functions over $\Kn$. If $X$ 
is a sub-variety of $\K^n$, let $I(X)$ be the ideal of functions of $\K[\bx]$
vanishing along $X$. The subset $X_\sing$ is the sub-variety consisting of 
the singular points of $X$. The sub-variety $X$ is \em  non-singular \em if 
its singular locus $X_\sing$ is empty.
\begin{definition}\label{def:geom-CI}
Let $f_1,\ldots,f_p,$ be functions of $\K[\bx]$ and let 
$X$ be their common zero locus. The collection $f_1,\ldots,f_p,$ defines  
a \em set-theoretic complete intersection \em if
$$
X_\sing  = \{ \bx \in X \; : \; \wedge_{i=1}^p D_\bx f_i = \bbo\}.
$$
\end{definition}
The condition in Definition \ref{def:geom-CI} is local analytic. 
It also implies that $p \leq n-1$ if $X_\sing$ is not empty. 
In particular
$$
\wedge_{i=1}^p D_\bx f_i \neq \bbo, \;\; \forall \bx \in X\setminus X_\sing,
$$
and thus the ($\K$-analytic) set germ $(X,\bx)$ is non-singular at $\bx$ and 
is of codimension $p$. Moreover the Jacobian criterion guarantees that 
$I(X)$ is generated by $f_1,\ldots,f_p$.

\bigskip
Let $\bx \to [\bx:1]$ be a fixed $\K$-linear embedding of $\Kn$ into $\bP^n$.
Let $\bH_\infty$ be the hyperplane at infinity.

Given an ideal $I$ of $\K[\bx]$, let $Z(I)$ be the zero-set of $I$. 
We will denote $P(I)$ the projective closure of $Z(I)$ in $\bP^n$,
and $P(I)^{zar}$ its Zariski closure in $\bP^n$, which contains $P(I)$. 
When $\K =\C$, both closures coincides.  

\bigskip
We recall that $\dd\overline{\R^{n_\K}}  = 0\times \bS^{n_\K-1}$.
Let $\Zbar(I)$ be the closure of $Z(I)$ in $\overline{\R^{n_\K}}$.
Following Lemma \ref{lem:transverse}, it is just the pull-back of $P(I)$
by $\pi_n^\K$. We also recall that 
$$
\Zbar(I) \cap \dd \overline{\R^{n_\K}} = 0 \times Z(I)^\infty.
$$

\subsection{Case of hypersurfaces}\label{subsec:hyper-LNE}
$ $ 

We will use the notations of Sub-section \ref{subsec:H}. 

\bigskip
We recall that $A(d,n)$ is the space of polynomials of $\K[\bx]$ of
degree $d$ or lower and that $H(d,n)$ is the subspace of $A(d,n)$ of 
homogeneous polynomials of degree $d$. 

Let $f \in A(d,n) \setminus A(d-1,n)$. The initial form $\ini(f)$ is the
homogeneous polynomial of degree $d$ appearing in the homogeneous 
decomposition of $f$. 

\bigskip
We obtain the following 
genericity result.
\begin{proposition}\label{prop:hyper-gen-LNE}
Assume $d\geq 1$.
There exists a Zariski open and dense subset $\cU(n,d)$ of $A(d,n)$, which 
does not intersect with $A(d-1,n)$, such that each $f \in  \cU(d,n)$
is irreducible, its zero locus $Z(f)$ is non-singular and conic at
$\infty$. Therefore each of its connected components is LNE.
In particular when $\K =\C$, the hypersurface $Z(f)$ is LNE.
\end{proposition}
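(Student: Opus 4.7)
The plan is to pass to the projective closure and feed the result into the machinery of Section \ref{section:PPoV}. Given $f \in A(d,n)$ of degree exactly $d$, decompose $f = f_d + \cdots + f_0$ with $\ini(f) = f_d$, and form the homogenization
$F(\bx,t) := \sum_{k=0}^{d} t^{d-k} f_k(\bx) \in H(d,n+1)$.
Then the projective closure of $Z(f)$ is $P(f) := \{F=0\} \subset \bP^n$, and $P(f) \cap \bH_\infty$ is naturally identified with the projective hypersurface $\{\ini(f)=0\} \subset \bH_\infty \simeq \bP^{n-1}$. A direct gradient computation yields $\nabla F(\bu,0) = (\nabla \ini(f)(\bu),\, f_{d-1}(\bu))$, from which one reads off that smoothness of $P(f)$ at $[\bu:0]$ together with transversality of $P(f)$ and $\bH_\infty$ at $[\bu:0]$ is equivalent to the single condition $\nabla\ini(f)(\bu)\ne\bbo$.

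Accordingly, I would take $\cU(n,d)$ to be the intersection of the Zariski open subsets of $A(d,n)$ on which: (a) $\ini(f)\ne 0$; (b) the affine hypersurface $Z(f)$ is non-singular; (c) the projective hypersurface $\{\ini(f)=0\} \subset \bP^{n-1}$ is non-singular; (d) $f$ is irreducible in $\K[\bx]$. Each of these conditions is the complement of a proper Zariski closed subset cut out by the vanishing of a suitable discriminant or resultant; this is exactly the kind of statement the Appendix is set up to supply. Since (c) subsumes (a), the set $\cU(n,d)$ is automatically disjoint from $A(d-1,n)$.

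For $f \in \cU(n,d)$, the gradient identity together with (c) gives that $P(f)$ is smooth at every point of $P(f)\cap\bH_\infty$ and meets $\bH_\infty$ transversely there, while (b) takes care of smoothness on the affine chart. Thus $P(f)$ is a smooth projective sub-variety of $\bP^n$ of positive $\K$-dimension and codimension, meeting $\bH_\infty$ transversely, so Lemma \ref{lem:transverse} applies and $Z(f)$ is conic at $\infty$. Combined with (b), $Z(f)$ is then a conic singular sub-manifold of $\R^{n_\K}$ with empty singular locus, and Theorem \ref{thm:main-Rn} yields that every connected component of $Z(f)$ is LNE. When $\K=\C$, conditions (b) and (d) together force $Z(f)$ to be a smooth irreducible complex algebraic variety, hence connected in the Euclidean topology, so $Z(f)$ itself is LNE.

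The conceptual work is light once the earlier sections are in hand; the main obstacle is verifying that (a)--(d) hold simultaneously on a non-empty Zariski open set of $A(d,n)$. Condition (d) is subtler than the others because irreducibility is not a Zariski open condition in all scheme-theoretic settings, but for polynomials of bounded degree over an infinite field it is, and this — along with the discriminant arguments for (b) and (c), carried out both in the affine space $A(d,n)$ and in the homogeneous subspace $H(d,n)$ — is precisely what the Appendix is organized to handle.
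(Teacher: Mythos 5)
Your proposal follows essentially the same route as the paper: the paper takes $\cU(d,n)=\Omg(d,n)$ from Proposition~\ref{prop:aff-hyper-gen}, which packages exactly your conditions (a)--(d) together with the transversality to $\bH_\infty$, and then invokes Lemma~\ref{lem:transverse} and Theorem~\ref{thm:main-Rn} just as you do. Your direct computation $\nabla F(\bu,0)=(\nabla\ini(f)(\bu),\,f_{d-1}(\bu))$ is a cleaner way to see the transversality than the paper's asymptotic gradient estimate via Corollary~\ref{cor:hyper-homog-gen}, but it is the same idea; the only cosmetic slips are that (c) does not literally subsume (a) (when $\ini(f)\equiv 0$ the projective hypersurface is all of $\bP^{n-1}$, trivially non-singular), and that over $\R$ the bounded or empty $Z(f)$ case should be dispatched separately since Lemma~\ref{lem:transverse} requires $P(f)\cap\bH_\infty\neq\emptyset$.
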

\begin{proof}
If $Z(f)$ is compact, then $\K = \R$. If it is non-singular then each 
connected component is LNE by Lemma \ref{lem:compact-LNE}.

Take $\cU(d,n) = \Omg(d,n)$ of Proposition \ref{prop:aff-hyper-gen}.
In this case $f_d :=\ini(f) \in U(d,n)$ of Lemma \ref{lem:hyper-homog-gen}, 
it is irreducible and the projective hypersurface $P(f_d) \subset \bH_\infty$
is empty or non-singular. 
\\
Since $P(f) = Z(f) \cup P(f_d)$ is non-singular in $\bP^n$ and transverse 
to $\bH_\infty$, by Lemma \ref{lem:transverse}, the closure 
$\Zbar(f)$ is a p-sub-manifold of $\overline{\R^{n_\K}}$, and Theorem 
\ref{thm:main-Rn} applies.
\end{proof}
We present the following variation of the previous result.
\begin{corollary}\label{cor:generic-level-LNE}  
Let $f:\Kn \to \K$ be a polynomial function of positive degree $d$.
Assume that $f_d := \ini(f)$ lies in the Zariski open and dense 
subset $U(d,n)$ of Lemma \ref{lem:hyper-homog-gen}. Then any non-empty 
regular level is conic at infinity. Therefore each connected 
components of any non-empty regular level $f^{-1}(c)$ is LNE. 
If $\K =\C$, each level is irreducible.
\end{corollary}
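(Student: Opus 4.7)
The plan is to reduce the statement to (the argument behind) Proposition \ref{prop:hyper-gen-LNE} by observing that, for a polynomial of positive degree, subtracting a scalar does not alter the initial form. Fix $c \in \K$ such that $f^{-1}(c)$ is non-empty and regular, and set $g := f - c$. Then $\deg g = d$ and $\ini(g) = f_d \in U(d,n)$ by hypothesis.

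If $Z(g)$ happens to be compact (which forces $\K = \R$), it is trivially conic at $\infty$ and each connected component is LNE by Lemma \ref{lem:compact-LNE}. Otherwise I would verify that the projective closure $P(g) \subset \bP^n$ is a non-singular hypersurface transverse to the hyperplane at infinity $\bH_\infty$. The affine part $Z(g) = f^{-1}(c)$ is non-singular because $c$ is a regular value of $f$. The intersection $P(g) \cap \bH_\infty$ coincides with the projective zero locus $P(f_d)$ of the initial form, which is non-empty, and the membership $f_d \in U(d,n)$ (Lemma \ref{lem:hyper-homog-gen}) supplies both the non-singularity of $P(f_d)$ inside $\bH_\infty$ and the transversality of $P(g)$ to $\bH_\infty$ along $P(f_d)$. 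These are the same at-infinity conditions already extracted in the proof of Proposition \ref{prop:hyper-gen-LNE}, and they depend only on $f_d$, not on $c$. Lemma \ref{lem:transverse} then gives that $f^{-1}(c)$ is conic at $\infty$; being smooth, it is a conic singular sub-manifold of $\R^{n_\K}$, so Theorem \ref{thm:main-Rn} delivers the LNE property for each connected component.

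For the irreducibility claim when $\K = \C$, suppose for contradiction that $f - c = p\,q$ in $\C[\bx]$ with both $p, q$ non-constant of degrees $k$ and $d - k$, where $1 \leq k \leq d - 1$. Taking initial forms yields $f_d = \ini(p)\,\ini(q)$, a factorization of $f_d$ into homogeneous polynomials of positive degrees, contradicting its irreducibility supplied by $f_d \in U(d,n)$. Hence $f - c$ is irreducible in $\C[\bx]$ for every $c$, so every non-empty level of $f$ is irreducible (and in particular connected). The one potentially delicate point in this plan is the at-infinity analysis of non-singularity and transversality, but since it only concerns the initial form, it is inherited unchanged from the proof of Proposition \ref{prop:hyper-gen-LNE}.
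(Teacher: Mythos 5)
Your proposal is correct and follows essentially the same route as the paper: the conic-at-$\infty$ behaviour of $f^{-1}(c)$ is controlled entirely by $\ini(f)=f_d$, so subtracting the constant $c$ changes nothing at infinity; the paper's proof simply observes that the closure $\overline{f^{-1}(c)}$ in $\overline{\R^{n_\K}}$ is a smooth p-sub-manifold ``by choice of $f_d$'' and then invokes Theorem \ref{thm:main-Rn}, which is exactly the reduction you carry out via $g=f-c$ and the transversality analysis of Proposition \ref{prop:aff-hyper-gen}. One small bonus of your write-up: the paper's proof of the corollary does not actually address the final irreducibility claim over $\C$, whereas your initial-form argument ($f-c=pq$ would yield the non-trivial homogeneous factorisation $f_d=\ini(p)\,\ini(q)$, contradicting $f_d\in U(d,n)$) supplies it cleanly and for every level $c$, not just the regular ones.
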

\begin{proof}
Let $c$ be a regular value of $f$ and let $F_c := \overline{f^{-1}(c)}\subset
\overline{\R^{n_\K}}$ and $F_c^\infty := f^{-1}(c)^\infty$.

If $f^{-1}(c)$ is compact, the announced result follows 
from Lemma \ref{lem:compact-LNE}.

If $f^{-1}(c)$ is unbounded then $F_c$ is a smooth sub-manifold with boundary 
$0\times F_c^\infty$ in $\overline{\R^{n_\K}}$. By choice of $f_d$, it
is a p-sub-manifold of $\overline{\R^{n_\K}}$. We conclude again
with Theorem 
\ref{thm:main-Rn}.
\end{proof}

\subsection{Case of set-theoretic complete intersections}\label{subsec:GIC-LNE}
$ $ 

We will use the notations and results of Sub-section \ref{subsec:CI}.

\medskip
Let $\bd = (d_1,\ldots,d_p)$ where $n-1 \geq p \geq 1$ and $d_1 \geq \ldots
\geq d_p \geq 1$. 
We recall that 
$$
\bA(\bd,n) := A(d_1,n) \times\ldots\times A(d_p,n), \;\;
{\rm and}  \;\;
\bH(\bd,n) := H(d_1,n)\times\ldots\times H(d_p,n). 
$$
Given $\bbf = (f_1,\ldots,f_p) \in \bA(\bd,n)$, let
$$
\ini(\bbf) = (\ini(f_1),\ldots,\ini(f_p)) \;\; {\rm and} \;\;
Z(\bbf) := \cap_{i=1}^p Z(f_i).
$$
The main result of this sub-section is about set theoretic
complete intersections.
\begin{proposition}\label{prop:CI-gen-LNE} 
There exists a Zariski open and dense subset $\cV(\bd,n)$ of $\bA(\bd,n)$ 
such that for $\bbf\in\cV(\bd,n)$ the variety $Z(\bbf)$ is non-singular
and conic at $\infty$. Thus, each of its connected components are 
LNE. Furthermore $Z(\bbf)$ is irreducible if $p\geq n-2$. In particular 
$Z(\bbf)$ is LNE if $\K = \C$ and $p\leq n-2$. 
\end{proposition}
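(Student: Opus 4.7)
The plan is to follow the blueprint of the proof of Proposition \ref{prop:hyper-gen-LNE}, upgrading the single-polynomial genericity from Sub-section \ref{subsec:H} of the Appendix to its complete-intersection analogue in Sub-section \ref{subsec:CI}. Concretely, I would take $\cV(\bd,n)$ to be the intersection of two Zariski open subsets of $\bA(\bd,n)$: the set of $\bbf$ whose zero locus $Z(\bbf)$ is a non-singular set-theoretic complete intersection of codimension $p$ in $\Kn$, and the set of $\bbf$ such that the tuple of initial forms $\ini(\bbf)=(\ini(f_1),\ldots,\ini(f_p))$ defines a non-singular complete intersection of codimension $p$ in the hyperplane at infinity $\bH_\infty \simeq \bP^{n-1}$. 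Each condition is Zariski open, and their density is exactly what the appendix delivers. Thus for every $\bbf\in\cV(\bd,n)$ the Jacobian $\wedge_{i=1}^p D_\bx f_i$ has maximal rank $p$ at every point of $Z(\bbf)$, so $Z(\bbf)$ is smooth with empty singular locus.

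The projective closure $P(\bbf)\subset\bP^n$ is cut out by the homogenizations $\wt{f}_1,\ldots,\wt{f}_p$, whose restrictions to $\bH_\infty$ recover (up to the standard identification) the initial forms $\ini(\bbf)$. The second genericity condition therefore says that $P(\bbf)\cap\bH_\infty$ is smooth of the expected dimension $n-p-1$, which together with the first condition forces $P(\bbf)$ itself to be smooth and to meet $\bH_\infty$ transversely. Applying Lemma \ref{lem:transverse} to each connected component of $P(\bbf)$ yields that the affine part $Z(\bbf)=P(\bbf)\setminus\bH_\infty$ is conic at infinity; hence $Z(\bbf)$ is a conic singular sub-manifold of $\Kn$ with empty singular locus, and Theorem \ref{thm:main-Rn} delivers LNE-ness of each of its connected components.

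For the connectedness/irreducibility assertion, I would invoke the classical connectedness theorem for complete intersections in projective space: a non-empty complete intersection in $\bP^n$ of dimension at least one is connected, and hence irreducible whenever it is smooth. In particular, when $n-p\geq 2$, the hyperplane section $P(\bbf)\cap\bH_\infty$ is a smooth complete intersection of positive dimension in $\bH_\infty$ and therefore connected; removing it from the smooth connected projective variety $P(\bbf)$ leaves a connected complex manifold $Z(\bbf)$, which is then globally LNE by the previous paragraph. The main obstacle is precisely the simultaneous verification that $\cV(\bd,n)$ is non-empty and dense, which requires enforcing genericity both of $\bbf$ itself and of $\ini(\bbf)$ at once; once that is carried out in the appendix, the present proposition becomes a routine amalgamation of Lemma \ref{lem:transverse} with Theorem \ref{thm:main-Rn}.
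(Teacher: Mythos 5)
Your proposal is essentially the paper's proof: choose the Zariski dense set to be (a subset of) the $\Omg(\bd,n)$ of Proposition~\ref{prop:aff-CI-gen}, whose defining conditions are precisely your two — $Z(\bbf)$ a smooth complete intersection of codimension $p$ and $\ini(\bbf)$ cutting out a smooth complete intersection at $\bH_\infty$ — and then combine Lemma~\ref{lem:transverse} with Theorem~\ref{thm:main-Rn}. The paper delegates both the simultaneous genericity and the primeness of the ideal $(\bbf)$ when $p\leq n-2$ to the Appendix (Propositions~\ref{prop:CI-homog-gen} and~\ref{prop:aff-CI-gen}), whereas you re-derive irreducibility by directly invoking the connectedness theorem for smooth positive-dimensional complete intersections in $\bP^n$; these are the same ingredient, the paper just hides it inside the Appendix's argument that $(\bbf)$ is prime. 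The only cosmetic difference is that the paper treats the compact case of $Z(\bbf)$ separately via Lemma~\ref{lem:compact-LNE}, while you let Definition~\ref{def:conic-infty} (bounded sets are conic at infinity by fiat) absorb that case into the general argument — both are fine.
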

\begin{proof} 
Take $\cV(\bd,n) =  \Omg(\bd,n)$ of Proposition \ref{prop:aff-CI-gen}.

If $Z(\bbf)$ is compact then it is non-singular and thus the result is true
by Lemma \ref{lem:compact-LNE}.

If $Z(\bbf)$ is not compact, its closure $\Zbar(\bbf)$ in 
$\overline{\R^{n_\K}}$ is a smooth sub-manifold with boundary $0\times 
Z(\bbf)^\infty$ and transverse to $\dd \overline{\R^{n_\K}}$ by Proposition 
\ref{prop:aff-CI-gen} and Lemma \ref{lem:transverse}. Theorem 
\ref{thm:main-Rn} yields the results.
\end{proof}
A variation of this result is the following
\begin{corollary}\label{cor:CI-levels-LNE}  
Let $\bbf \in \bA(\bd,n)$ such that $\ini(\bbf)$ lies in $V(\bd,n)$ of 
Proposition \ref{prop:CI-homog-gen}. Then each connected component of 
each non-empty regular level $\bbf^{-1}(\bc)$ is LNE. If $\K =\C$, all
regular levels are LNE.
\end{corollary}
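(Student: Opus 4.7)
The plan is to imitate the strategy of Corollary \ref{cor:generic-level-LNE} and Proposition \ref{prop:CI-gen-LNE}, exploiting the fact that translating $\bbf$ by a constant does not change its initial forms, so that the hypothesis $\ini(\bbf)\in V(\bd,n)$ already encodes all the required genericity at infinity for every level simultaneously.

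Concretely, fix a regular value $\bc=(c_1,\ldots,c_p)\in\K^p$ of $\bbf$ and set $\bbf_\bc:=(f_1-c_1,\ldots,f_p-c_p)\in\bA(\bd,n)$, so that $\bbf^{-1}(\bc)=Z(\bbf_\bc)$. Since every $d_i\geq 1$, I have $\ini(\bbf_\bc)=\ini(\bbf)\in V(\bd,n)$. The regularity of $\bc$ gives that $\wedge_{i=1}^p D_\bx f_i\neq \bbo$ along $Z(\bbf_\bc)$, so $\bbf_\bc$ defines a non-singular set-theoretic complete intersection in $\Kn$. If $Z(\bbf_\bc)$ is compact, each connected component is a smooth compact sub-manifold and Lemma \ref{lem:compact-LNE} concludes.

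When $Z(\bbf_\bc)$ is unbounded, I would invoke Proposition \ref{prop:CI-homog-gen} applied to $\ini(\bbf_\bc)=\ini(\bbf)$: the common zero locus of these homogeneous pieces defines a non-singular projective subvariety of the hyperplane at infinity $\bH_\infty$ which is met transversely by the projective closure of $Z(\bbf_\bc)$ (this is precisely the ``behaviour at infinity'' piece extracted in the proof of Proposition \ref{prop:CI-gen-LNE}, and it depends on $\bbf_\bc$ only through $\ini(\bbf_\bc)$, hence does not see the shift by $\bc$). Combined with the affine non-singularity, Lemma \ref{lem:transverse} gives that $\overline{Z(\bbf_\bc)}\subset\overline{\R^{n_\K}}$ is a smooth p-sub-manifold with compact boundary $0\times Z(\bbf_\bc)^\infty$, i.e.\ $Z(\bbf_\bc)$ is conic at $\infty$. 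Therefore $Z(\bbf_\bc)$ is a conic singular sub-manifold of $\R^{n_\K}$ with empty singular locus, and Theorem \ref{thm:main-Rn} applied to each connected component yields the first assertion.

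For the complex case, the statement upgrades ``each connected component is LNE'' to ``the whole level is LNE'', and this is the only delicate point: one has to know that $\bbf^{-1}(\bc)$ is connected whenever non-empty. I would deduce this from a Bertini-type connectedness argument in the spirit of the irreducibility clause of Proposition \ref{prop:CI-gen-LNE} applied to $\bbf_\bc$, together with the fact that a non-singular complete intersection in $\C^n$ whose projective closure is non-singular and meets $\bH_\infty$ transversely (a condition already verified above from $\ini(\bbf)\in V(\bd,n)$) is connected. I expect this connectedness step to be the main obstacle, since every other ingredient—non-singularity in the affine part, transversality at infinity, and the passage from ``conic at infinity'' to ``LNE''—has been packaged into Proposition \ref{prop:CI-homog-gen}, Lemma \ref{lem:transverse} and Theorem \ref{thm:main-Rn} respectively.
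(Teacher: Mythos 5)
Your approach is essentially the paper's: the paper's proof of this corollary is the one-liner ``Same proof as that of Proposition~\ref{prop:CI-gen-LNE},'' and the mechanism is exactly what you lay out—replace $\bbf$ by $\bbf_\bc=\bbf-\bc$, note $\ini(\bbf_\bc)=\ini(\bbf)$ so the behaviour at infinity is controlled by the hypothesis $\ini(\bbf)\in V(\bd,n)$ alone, use regularity of $\bc$ for affine non-singularity, obtain the p-sub-manifold property of the closure in $\overline{\R^{n_\K}}$ via Corollary~\ref{cor:CI-homog-gen} and the estimate in the proof of Proposition~\ref{prop:aff-CI-gen}, and then invoke Theorem~\ref{thm:main-Rn}. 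On the complex connectedness step that you flag as the remaining obstacle: the paper does not reach for a Bertini-type geometric argument but for the algebraic one already built into the proof of Proposition~\ref{prop:aff-CI-gen}—since $\ini(\bbf_\bc)=\ini(\bbf)\in V(\bd,n)$ and $p\leq n-2$, the ideal generated by the initial forms is prime by Proposition~\ref{prop:CI-homog-gen}, and primality of the initial ideal forces primality of $(\bbf_\bc)$, hence $Z(\bbf_\bc)$ is irreducible and therefore connected. Note that this clause (as in Proposition~\ref{prop:CI-gen-LNE}) implicitly requires $p\leq n-2$, a restriction the corollary's wording does not repeat; your instinct that the ``all regular levels'' upgrade needs a separate connectedness input is correct, and the primality route is both shorter and more uniform than re-deriving connectedness of the projective closure.
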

\begin{proof}
Same proof as that of Proposition \ref{prop:CI-gen-LNE}
\end{proof}
%
%

%
%
%
%
%
%
%
%
%
%
%
%
%
%
%
%
%
%
%
%
%
%
%
%
%
%
%
%
%
%
%
%
%
%
%
%
%
%
%
%
%
%
%
%
%
%
\section{Application to complex ICIS germs}\label{section:SAtICISG}
A by-product of the previous sections and the appendix, is the 
LNE result 
below for complex ICIS germs. Presenting also real statements would be 
at the cost of introducing further notations and cases to deal with the 
problem of non-negative homogeneous polynomials. Moreover the proofs would be 
the same as the complex one.

\bigskip
Let $\cO_n$ be the local $\C$-algebra of $\C$-analytic function germs 
$(\C^n,\bbo) \to \C$. Let $\mfrk_n$ be its maximal ideal. A function germ 
$f\in \cO_n$ writes as the formal power series
$$
f = \sum_{k\geq 0} f_k
$$
where $f_k \in H(k,n)$. Let $m_f$ be the multiplicity of $f$, that is the 
non-negative integer $d$ such that 
$$
f \in \mfrk_n^d \setminus \mfrk_n^{d+1}
$$
with the convention that $m_0 = \infty$. If $d = m_f < \infty$, we define
$$
\ini_\bbo(f) := f_d \in H(d,n)\setminus 0.
$$
If $I$ is any ideal of $\cO_n$,
let $(Z(I),\bbo)$ be the germ at $\bbo$ of its vanishing locus.
Let
$\Zbar_\bbo(I)$ be the strict transform of $Z(I)$ by $\bbl_\bbo$.

\medskip
Let $p \in \{1,\ldots,n-1\}$. Let $\bd = (d_1,\ldots,d_p) \in \N$ such that
$d_1\geq \ldots \geq d_p \geq 1$. Given $i \in \{1,\ldots,n\}$, we denote
$$
\bd + \bun_i := (d_1, \ldots,d_{i-1},1+d_i,d_{i+1},\ldots,d_p).
$$
Let us also denote 
$$
\mfrk(\bd,n) := \mfrk_n^{d_1}\times \ldots \times \mfrk_n^{d_p}.
$$
For $\bbf = (f_1,\ldots,f_p) \in \mfrk(\bd,n)\setminus \cup_{i=1}^n 
\mfrk(\bd+\bun_i,n)$, we define $\ini_\bbo(\bbf) \in \bH(\bd,n)$ as
$$
\ini_\bbo(\bbf) := (\ini_\bbo(f_1),\ldots,\ini_\bbo(f_p)).
$$
We recall that a germ $(X,\bbo) \subset (\Cn,\bbo)$ is an ICIS germ of 
codimension
$p$ (isolated complete intersection singularity) if there exists 
$\bbf = (f_1,\ldots,f_p)$ in some $\mfrk(\bd,n)$ such that 
$$
I(X) = (f_1,\ldots,f_p)
$$
and moreover for $\bx \in (X\setminus \bbo,\bbo)$, the following holds true
$$
\wedge_{i=1}^p D_\bx f_i \neq \bbo.
$$

\bigskip
Applying the results of the Appendix and Lemma 
\ref{lem:collar-origin} yields the 
following genericity result. The proof is only sketched 
since the arguments are the same that allow to
obtain Proposition \ref{prop:CI-gen-LNE}.
\begin{proposition}\label{prop:ICIS-germ-LNE}
Given $p  \in \{1, \ldots,n-1\}$ and $\bd$, there exists a Zariski open dense 
subset $\Ufrk(\bd,n)$ of $\bH(\bd,n)$ such that for each $\bbf \in 
\mfrk(\bd,n)$ such that $\ini_\bbo(\bbf)\in \Ufrk(\bd,n)$, the following properties are satisfied: 
(i) $(Z(\bbf),\bbo)$ is an ICIS; 
(ii) The origin is a conic singular point.
Therefore there exists a positive 
radius $r_\bbf$ such that for positive $r\leq r_\bbf$, the subset
$Z(\bbf)_{\leq r}$ is LNE.
\end{proposition}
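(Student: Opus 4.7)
The strategy parallels that of Proposition \ref{prop:CI-gen-LNE}, with the roles of \emph{infinity} and \emph{the origin} swapped. My first step is to import the complex-homogeneous genericity statement from the appendix: I define $\Ufrk(\bd,n)$ to be the intersection of $\bH(\bd,n)$ with the Zariski open dense subset provided by Proposition \ref{prop:CI-homog-gen}, so that for $\bg = (g_1,\ldots,g_p)\in\Ufrk(\bd,n)$ the projective variety $P(\bg)\subset\bP^{n-1}$ is non-singular of dimension $n-1-p$, in particular $\wedge_{i=1}^p D_\bz g_i\neq 0$ on $Z(\bg)\setminus\bbo$, and the link $L := Z(\bg)\cap\bS^{2n-1}$ is a smooth compact real submanifold of $\bS^{2n-1}$ (because $Z(\bg)$ is a cone, the radial direction is tangent to $Z(\bg)$, so $|\bz|^2$ restricted to $Z(\bg)\setminus\bbo$ is a submersion and the unit sphere cuts $Z(\bg)$ transversely).

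Next, pick $\bbf\in\mfrk(\bd,n)$ with $\bg := \ini_\bbo(\bbf)\in\Ufrk(\bd,n)$ and write $f_i = g_i + \tilde f_i$ with $\tilde f_i\in\mfrk_n^{d_i+1}$. After spherical blowing-up $\bbl_\bbo(r,\bu) = r\bu$, the identity $f_i(r\bu) = r^{d_i}\bigl(g_i(\bu) + r\cdot h_i(r,\bu)\bigr)$ holds for smooth germs $h_i$ near $\Rgo\times\bS^{2n-1}$ at $r=0$. The strict transform $[Z(\bbf),\bbo]$ is therefore the zero locus of $F_i(r,\bu) := g_i(\bu) + r\, h_i(r,\bu)$, $i=1,\ldots,p$, near the boundary $0\times\bS^{2n-1}$. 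At $r=0$ one recovers $L$, and since $(g_1,\ldots,g_p)$ is a submersion transverse to $\bS^{2n-1}$ along $L$, the map $(F_1,\ldots,F_p)$ is also a submersion in a (real) neighbourhood of $0\times L$ in $\Rgo\times\bS^{2n-1}$. The implicit function theorem then provides an open neighbourhood of $0\times L$ in which $[Z(\bbf),\bbo]$ is a smooth real submanifold with compact boundary $0\times L = \ff[Z(\bbf),\bbo]$, automatically transverse to $\ff[\Cn,\bbo]$ (the $\bu$-component of the differential surjects onto $T_\bu L^{\perp}$ in $\bS^{2n-1}$). Shrinking the front-face neighbourhood if necessary, $[Z(\bbf),\bbo]\cap([0,r_\bbf]\times\bS^{2n-1})$ is a closed p-sub-manifold of $[\Cn,\bbo]\cap([0,r_\bbf]\times\bS^{2n-1})$. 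By Definition \ref{def:conic-point} this is exactly what it means for $\bbo$ to be a conic point of $Z(\bbf)$; since the same implicit function argument shows $\wedge_{i=1}^p D_\bx f_i\neq 0$ on $Z(\bbf)\setminus\bbo$ near $\bbo$, $(Z(\bbf),\bbo)$ is an ICIS of codimension $p$, giving (i) and (ii).

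Finally, given (i) and (ii), I conclude exactly as in the compact case. The collar lemma (Lemma \ref{lem:collar-origin}) and Claim \ref{claim:graph-origin} applied at the conic point $\bbo$ yield an outer bi-Lipschitz homeomorphism between $Z(\bbf)_{\leq r_\bbf}$ and the truncated non-negative cone $\wh{S_\bbo Z(\bbf)}^+\cap\bB_{r_\bbf}^{2n}$; the link $S_\bbo Z(\bbf) = L$ is a connected compact smooth sub-manifold of $\bS^{2n-1}$ (connected because $P(\bg)$ is complex, hence irreducible generically), so by Corollary \ref{cor:cone-LNE} this cone is LNE, and therefore so is $Z(\bbf)_{\leq r}$ for every $0<r\leq r_\bbf$. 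The only delicate point in the whole argument is the implicit function step on the front face — one must ensure that the higher order perturbation $r\cdot h_i(r,\bu)$ really is a $C^\infty$ (not merely continuous) perturbation in the blown-up real coordinates, which is straightforward because the blowing-up of a polynomial in the variable $r$ stays polynomial in $(r,\bu)$ modulo the smooth change between complex and real coordinates.
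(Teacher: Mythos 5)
Your plan is substantively correct and largely parallels the paper's; the paper's own proof is terser, first noting (as you do) that the choice $\Ufrk(\bd,n) := V(\bd,n)$ from Proposition~\ref{prop:CI-homog-gen} forces $\crit(\bbf)\cap Z(\bbf)\subset\{\bbo\}$ so that the strict transform $[Z(\bbf),\bbo]$ is a p-sub-manifold germ at the front face, and then closing the LNE conclusion by citing an external result (\cite[Corollary 4.6]{CoGrMi2}, applicable because the germ has sub-analytic representatives) rather than re-running Lemma~\ref{lem:collar-origin}, Claim~\ref{claim:graph-origin} and Corollary~\ref{cor:cone-LNE} as you do. Your route is a bit longer but more self-contained within the present paper, and the implicit-function computation on the blown-up coordinates $F_i(r,\bu)=g_i(\bu)+r\,h_i(r,\bu)$ is a correct and explicit justification of the ``in other words'' step that the paper only asserts.

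Two small imprecisions worth fixing. First, your parenthetical claim that the link $L=S_\bbo Z(\bbf)$ is \emph{connected} because $P(\bg)$ is ``complex, hence irreducible generically'' fails when $p=n-1$: there $P(\bg)\subset\bP^{n-1}$ is a finite set of points, $Z(\bg)$ is a union of complex lines, and $L$ is a disjoint union of circles. (The paper itself flags this dichotomy.) This does not break the argument, but you should not invoke Corollary~\ref{cor:cone-LNE}(1), which requires a connected link; instead use Claim~\ref{claim:graph-origin} (which already treats a tangent cone with several components via the $\dlt$-separation argument) together with Corollary~\ref{cor:union-cone-LNE} and then Corollary~\ref{cor:cone-LNE}(2) for the truncation. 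Second, the closing remark that ``the blowing-up of a polynomial in the variable $r$ stays polynomial'' is off: the $f_i$ are analytic germs in $\mfrk(\bd,n)$, not polynomials. The factor $h_i(r,\bu)$ is obtained by rearranging the convergent power series of $\tilde f_i\in\mfrk_n^{d_i+1}$, and one should say it is real-analytic (hence smooth) in $(r,\bu)$ for small $r$ by convergence of the rearranged series, rather than appeal to polynomiality.
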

\begin{proof}
Let $X := Z(\bbf)$ and let $\Ufrk(\bd,n) := V(\bd,n)$ of Proposition 
\ref{prop:CI-homog-gen}. 
First, by the results of the Appendix, observe that the projective 
variety $P(\ini_\bbo(\bbf))\subset \bP^{n-1}$ is irreducible and non-singular 
if $p\leq n-2$ or consists of finitely many points when $p=n-1$. Moreover 
$\crit(\bbf) \cap X$ is contained in $\{\bbo\}$, in other words
the germ 
$$
([X,\bbo],\ff ([\R^{2n},\bbo])) 
$$
is that of a p-sub-manifold of $([\R^{2n},\bbo],\ff ([\R^{2n},\bbo])$. Thus 
the origin is a conic singular point of $X$. Since the germ $(X,\bbo)$ admits 
sub-analytic representatives, the results follows from \cite[Corollary 4.6]{CoGrMi2}. 
\end{proof}
%
%
%
%
%
%
%
%
%
%
%
%
%
%
%
%
%
%
%
%
%
%
%
%
%
%
%
%
%
%
%
%
%
%
%
%
%
%
%
%
%
%
%
%
%
%
%
\appendix
\section{Some genericity results}\label{section:appendix}
We gather here, with proofs, variations of known genericity results 
about real and complex polynomials mappings. 
The case of real and complex plane curves which are LNE is elementary 
in the real case and follows from \cite{Tar,CoGrMi1} in the complex case.

\medskip\noindent
{\bf Hypothesis:} \em We assume in what follows that $\dim_\K \Kn = n \geq 
3$. \em

\subsection{Hypersurfaces}\label{subsec:H}
$ $

Let $\K[\bx] := \K[x_1,\ldots,x_n]$ be the $\K$-algebra of polynomial 
functions over $\K^n$. Let 
$$
A(d,n) := \{f \in \K[\bx] : \deg f \leq d\}.
$$
For $f \in A(d,n)$, let $Z(f)$ be its zero locus in $\K^n$ and $\crit(f)$
be its critical locus.

\bigskip
Let $H(d,n)$ be the vector subspace of $A(d,n)$ consisting of the homogeneous
polynomials of degree $d$. 
The zero locus $Z(g)$ of $g \in H(d,n)$ is a $\K$-cone of $\K^n$, perhaps 
reduced to the origin (in which case  $\K=\R$). When it is not empty, its 
quotient by the homogeneous action of $\K^*$ is the sub-variety 
$\bP Z(g)$ of $\bP^{n-1}$ ($=\bH_\infty$). In other words
$$
\bP Z(g) = \wh{P(g)} 
$$
the $\K$-cone over $\bP Z(g)$, with the convention that $\wh{\emptyset} = 
\bbo$, i.e. when $\bP Z(g)$ is empty. 

\medskip
Any polynomial $f \in A(d,n)\setminus A(d-1,n)$ decomposes uniquely 
in the direct sum
$$
A(d,n) = \bigoplus_{k=0}^d H(k,n),
$$
as 
$$
f = f_0 + f_1 + \ldots + f_d
$$ 
with $f_k \in H(k,n)$ for $k=0,\ldots,d$, and $f_d \neq 0$. The initial 
form of $f\in A(d,n)\setminus A(d-1,n)$ is
$$
\ini(f) := f_d.
$$
The affine embedding $e_n:\Kn \to \bP^n$, defined as $\bx \mapsto [\bx:1]$, 
induces the following $\K$-linear isomorphism $\wt{e}_n : A(d,n) \to 
H(d,n+1)$:
\begin{equation}\label{eq:lin-embed}
f \to \wt{e}_n(f), \;\; {\rm where} \;\; \wt{e}_n(f) (\bx,z ) :=
 \sum_{k=0}^d z^kf_{d-k}(\bx).
\end{equation} 
In other words $\wt{e}_n(f) \circ e_n = f$ for each $f\in {A(d,n)}$.

\bigskip
We recall that
$$
\dim_\K H(d,n) = N(d,n) = C_{n-1}^{d+n-1}.
$$
The following result is well known.
\begin{lemma}
Let $(n,d)$ be such that $n\geq 3$ and $d\geq 1$.
The subspace $U(d,n)^0$ of irreducible polynomials of $H(d,n)$ is Zariski 
open and not empty.
\end{lemma}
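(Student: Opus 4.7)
The plan is to prove non-emptiness by an explicit construction and Zariski openness by identifying the locus of reducible forms as the affine cone over the image of a morphism from a projective variety, which is Zariski closed by completeness.

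For non-emptiness, the case $d=1$ is immediate since every nonzero linear form is irreducible. For $d \geq 2$, exploiting the hypothesis $n \geq 3$, I would exhibit
\[
f(x_1,\ldots,x_n) \;:=\; x_1^d - x_2\,x_3^{d-1} \;\in\; H(d,n).
\]
Regarding $f$ as a polynomial in $x_1$ over the UFD $R := \K[x_2,\ldots,x_n]$, all intermediate coefficients in $x_1$ vanish and the constant term $-x_2\,x_3^{d-1}$ lies in the prime ideal $(x_2) \subset R$ but not in $(x_2)^2$. Eisenstein's criterion then yields irreducibility of $f$ in $R[x_1]$, and Gauss's lemma upgrades this to irreducibility in $\K[x_1,\ldots,x_n]$, placing $f$ in $U(d,n)^0$.

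For Zariski openness, I would show that the complement of $U(d,n)^0$ in $H(d,n)$ is Zariski closed. A form of degree $d$ fails to be irreducible if and only if it is zero or factors as $f = gh$ with $g \in H(k,n)$ and $h \in H(d-k,n)$ for some $1 \leq k \leq \lfloor d/2 \rfloor$. Consider the bilinear multiplication maps
\[
\mu_k : H(k,n) \times H(d-k,n) \,\longrightarrow\, H(d,n), \qquad (g,h) \mapsto g h.
\]
Because $\K[\bx]$ is an integral domain (so $g,h\neq 0$ implies $gh\neq 0$), each $\mu_k$ descends to a well-defined morphism of projective $\K$-varieties
\[
\bar\mu_k : \bP H(k,n) \times \bP H(d-k,n) \,\longrightarrow\, \bP H(d,n), \qquad ([g],[h]) \mapsto [gh].
\]
The source is projective, hence complete, so its image under $\bar\mu_k$ is Zariski closed in $\bP H(d,n)$. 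Its preimage in $H(d,n) \setminus \{0\}$ under the quotient $H(d,n)\setminus\{0\} \to \bP H(d,n)$ is a $\K^*$-invariant Zariski closed subset, and adjoining the origin produces the affine cone over this closed projective subvariety, which remains Zariski closed in $H(d,n)$. A finite union of such cones (over $k = 1,\ldots,\lfloor d/2\rfloor$) is the complement of $U(d,n)^0$, hence closed.

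The main point to verify is the closed-image step in the argument, which is classical for $\K = \C$ via completeness of projective varieties. Over $\K = \R$ one appeals to the fact that projective morphisms are proper over any base field, so the (scheme-theoretic) image is closed; the only mild subtlety is the consistent interpretation of "irreducible" in $H_\R(d,n)$, and once this is fixed so as to match the image of the real multiplication maps, the same closedness conclusion follows. No substantive obstacle arises beyond this bookkeeping.
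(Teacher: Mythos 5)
Your proof is correct and takes a genuinely different route from the paper's. The paper's argument is a dimension count: it bounds $\dim H(d,n) = N(d,n)$ from below against $N(e,n)+N(f,n)$ for every splitting $d=e+f$, via estimates on the auxiliary polynomial $Q(T)=\prod_{k=1}^{n-1}(k+T)$, showing that the locus of products $g\cdot h$ has positive codimension; the fact that this locus is Zariski closed is left implicit. Your approach instead produces an explicit irreducible form $x_1^d - x_2 x_3^{d-1}$ by Eisenstein and Gauss (a cleaner, non-quantitative route to non-emptiness) and then makes closedness of the reducible locus explicit: homogeneous factors of a homogeneous form are themselves homogeneous, so the reducible locus is the finite union over $1\le k\le \lfloor d/2\rfloor$ of affine cones over the images of the projective morphisms $\bar\mu_k$, and those images are closed because the sources $\bP H(k,n)\times\bP H(d-k,n)$ are complete. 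Over $\K=\R$ your use of properness of projective morphisms over an arbitrary base field is the right way to handle the real case, and your caveat about matching ``irreducible over $\R$'' with factorizations witnessed by the real multiplication maps is exactly the bookkeeping one needs. In short, your proof is more self-contained on the openness side, while the paper's dimension count gives quantitative codimension information that the lemma does not actually require.
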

\begin{proof}
If $d =1$, then $U(d,n)^0 = H(d,n)\setminus \bbo$, where $\bbo$ means the 
null polynomial. 

\medskip
If $n-1 = d = 2$, we find $N(2,3) = 6 = 2 N(1,3)$. 
Let $R := H(1,3) \times H(1,3)$. Thus 
$$
R \setminus \bbo = \{(a_1x_1+a_2x_2+a_3x_3)\cdot (b_1x_1 + b_2x_2 + b_3 x_3), 
\; (\ba,\bb)  \in (\K^*)^3\times (\K^*)^3\}.
$$

Since the pair $(\lbd \ba,\lbd^{-1}\bb)$ gives the same polynomial of 
$R\setminus \bbo$ independently of $\lbd\in \K^*$, the space $R$ has positive
codimension. 

\medskip
Let $q:=n-1 \geq 2$ and consider the following polynomial
$$
Q(T) := \Pi_{k=1}^{q}(k+T).
$$
Therefore we see that
$$
N(d,n) = \frac{Q(d)}{Q(0)} = \Pi_{k=1}^q \frac{d+k}{k}.
$$
Let $d = e + f \geq 2e \geq 2$. Thus 
$$
D(f,e) := N(d,n) -  N(f,n) - N(e,n) = \frac{1}{q!}[Q(d) - Q(f) - Q(e)].
$$
Since 
$$
Q(d) = Q(e+f) \geq  Q(f) + e \cdot Q(e+f)\sum_{k=1}^q\frac{1}{k+d} .
$$
We deduce that 
$$
Q(d) - Q(f) - Q(e) \geq Q(e+f) \sum_{k=1}^q \frac{e}{k+d} - q \cdot 
\frac{Q(e)}{q}. 
$$
Observe that 
$$
Q(e+f) \cdot \frac{e}{d+k} \cdot \frac{q}{Q(e)} = \frac{e\cdot q}{e+k}\cdot 
\Pi_{l\neq k} \frac{l+d}{l+e}  \geq \frac{e\cdot q}{k+e}. 
$$
Since $q \geq 2$ and $e\geq 1$, $qe < e+q$ if and only if $e=1$.   
Whenever $e\geq 2$ we find that $D(f,e) >0$.

If $e = 1$, since $f\geq 2$ if $q =2$, then
$$
Q(1+f) - Q(f) - Q(1) = q (2+f) \cdots (q+f) - (q+1)!  > 0.
$$

Let $d_1\geq d_2\geq \ldots \geq d_p \geq 1$ be integer numbers such that
$$
d_1 + d_2 + \ldots + d_p =d
$$
for a positive integer $p\geq 2$. Let 
$$
R_p := H(d_1,n) \times H(d_2,n) \times \ldots \times H(d_p,n).
$$

We get the result, for the announced range of pairs $(n,d)$, by an induction 
on $p$ showing that $R_p$ has positive codimension in $H(d,n)$.
\end{proof}
\begin{remark}
If $f \in H(d,n) \setminus \bbo$ is irreducible, then $f+g$ is irreducible 
for any polynomial $g\in A(d-1,n)$. The converse is 
obviously false. 
\end{remark}
Since any polynomial $f$ of $H(d,n)$ satisfies the Euler identity
$$
D_\bx f \cdot \bx = d \cdot f(\bx), \;\; \forall \bx \in \Kn,
$$
the critical locus of $f$ is contained in its $0$-level
$Z(f)$.

\medskip
Given $\aph\in \N^n$, we write 
$$
\bx^\aph = \Pi_{i=1}^n x_i^{\aph_i}
$$
with the convention that $\bx^\bbo = 1$ for all $\bx \in \K^n$. 
Any polynomial $f \in A(d,n)$ writes as
$$
f (\bx) = \sum_{|\aph|\leq d} a_\aph \bx^\aph.
$$
We will at times identify the polynomial $f$ of $A(d,n)$ with its 
coefficients $\ba = (a_\aph)_{|\aph| \leq d}$. 
We will write $\ba(\bx)$ for the value of the polynomial at $\bx$.

\bigskip\noindent
Let $\bun_j \in \N^n$ be defined as 
$$
(\bun_j)_k = \dlt_{j,k}
$$
where $\dlt_{j,k}$ is the Kronecker symbol and $j,k=1,\ldots,n$.
Denoting $\dd_i$ the partial derivative in $x_i$, we get 
$$
\dd_i f = \dd_i\ba = (\aph_i a_\aph)_{|\aph| \leq d, \aph_i \geq 
1} = ((\beta_i+1)a_{\beta+\bun_i})_{|\beta|\leq d-1}\in A(d-1,n).
$$
\begin{lemma}\label{lem:hyper-homog-gen}
There exists a Zarisiki open and dense subset $U(d,n)$ of $H(d,n)$
such that each $f\in U(d,n)$ is irreducible and $Z(f) \cap \crit(f) 
\subset \{\bbo\}$. In particular the sub-variety $\bP Z(f)$ is either 
empty (and thus $\K = \R$) or it is a  non-singular irreducible hypersurface.
\end{lemma}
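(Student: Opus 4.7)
The plan is to refine the open set $U(d,n)^0$ of irreducible polynomials from the preceding lemma by intersecting it with the set of $f\in H(d,n)$ whose projective zero set is non-singular. I first note that Euler's identity $D_\bx f \cdot \bx = d\cdot f(\bx)$ gives $\crit(f) \subset Z(f)$ for every $f \in H(d,n)$, so the condition $Z(f)\cap \crit(f) \subset \{\bbo\}$ is equivalent to requiring that the partial derivatives $\partial_1 f,\ldots,\partial_n f$ have no common zero in $\Kn\setminus\bbo$, i.e.\ that $\bP Z(f)$ is smooth. Thus it suffices to show that the \emph{discriminant locus}
\[
\Delta(d,n) := \{f \in H(d,n) : \exists \, \bx\in\Kn\setminus\bbo,\; D_\bx f = 0\}
\]
is a proper Zariski-closed subset of $H(d,n)$.

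The key step is a standard incidence-variety dimension count. Consider
\[
\Sigma := \{(f,\bx) \in H(d,n)\times(\Kn\setminus\bbo) : D_\bx f = \bbo\},
\]
projected by $\pi_1$ to $H(d,n)$ and by $\pi_2$ to $\Kn\setminus\bbo$. For fixed $\bx_0\neq\bbo$, the fiber $\pi_2^{-1}(\bx_0)$ is cut out by the $n$ linear functionals $L_i(f) := \partial_i f(\bx_0)$ on $H(d,n)$. Evaluating at $\bx_0=\be_1$, one checks that $L_1$ reads off the coefficient of $x_1^d$ while $L_i$ (for $i\geq 2$) reads off the coefficient of $x_1^{d-1}x_i$, so $L_1,\ldots,L_n$ are linearly independent; by the $\K^*$-equivariance of the construction the same holds at every $\bx_0\neq\bbo$. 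Hence each fiber of $\pi_2$ has codimension $n$ in $H(d,n)$, giving $\dim \Sigma = \dim H(d,n)$. Now the $\K^*$-action $\lbd\cdot(f,\bx) = (f,\lbd\bx)$ preserves $\Sigma$ and each nonempty fiber of $\pi_1$ contains a full orbit, so $\dim \pi_1(\Sigma) \leq \dim\Sigma - 1 = \dim H(d,n)-1$. Thus $\Delta(d,n)$ has positive codimension.

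To conclude non-emptiness of the Zariski open complement, I exhibit the Fermat polynomial $f_0 := x_1^d+\cdots+x_n^d$: its gradient $(dx_1^{d-1},\ldots,dx_n^{d-1})$ vanishes only at $\bbo$, hence $f_0 \notin \Delta(d,n)$. Setting $U(d,n) := U(d,n)^0 \setminus \Delta(d,n)$ yields a Zariski open dense subset of $H(d,n)$ with the required properties; irreducibility of $\bP Z(f)$ follows because $f$ is irreducible and the projective vanishing ideal of $\bP Z(f)$ equals the ideal generated by $f$ (by the Jacobian criterion, since $\bP Z(f)$ is non-singular of codimension one).

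The main obstacle is the real case, where semi-algebraic images of algebraic sets need not be algebraic, so $\pi_1(\Sigma)$ being of positive dimension does not immediately give a Zariski-closed bad set. I would handle this by observing that $\Delta(d,n)$ is globally defined by the vanishing of a single polynomial in the coefficients of $f$, namely the multivariate resultant of $\partial_1 f,\ldots,\partial_n f$, which is an element of $\K[\text{coefficients of }f]$ whose zero locus is Zariski closed in both $\R$ and $\C$; the Fermat example then shows that this resultant does not vanish identically, so $\Delta(d,n)$ is a proper algebraic hypersurface in either case.
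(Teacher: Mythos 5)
Your proof follows the same incidence-variety dimension count as the paper: the fiber of $\Sigma = \{(f,\bx) \in H(d,n)\times(\Kn\setminus\bbo) : D_\bx f = \bbo\}$ over any nonzero $\bx_0$ has codimension $n$ in $H(d,n)$, so $\dim\Sigma = \dim H(d,n)$, and the $\K^*$-action on the second factor drops the dimension of $\pi_1(\Sigma)$ by one. One slip: you justify independence of $L_1,\ldots,L_n$ at a general $\bx_0\neq\bbo$ from the case $\bx_0=\be_1$ "by $\K^*$-equivariance," but scaling only reaches multiples of $\be_1$. You need $GL_n$-equivariance instead (a linear change of coordinates preserves $H(d,n)$ and carries the condition $D_\bx f=\bbo$ along), or the paper's direct computation at a general $\bx_0$ with, say, $x_1\neq 0$, using the coefficients of $x_1^d$ and $x_1^{d-1}x_i$. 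Where you genuinely depart from the paper is in securing Zariski-algebraicity of the bad locus over $\R$: your identification of $\Delta(d,n)$ with the zero set of the Macaulay resultant of $\partial_1 f,\ldots,\partial_n f$ is concrete and self-contained, whereas the paper passes to the Zariski closure of the $\K$-constructible set of critical values of $\pi_0\colon\Sigma\to U(d,n)^0$ and then observes that a nonempty fiber over a regular value, being a punctured cone, would have dimension at least one, a contradiction. Both work; yours is arguably more direct (the resultant is not defined when $d=1$, but that case is trivial and the paper also treats it separately).
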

\begin{proof}
If $d = 1$, take $U(n,d) = H(n,d)\setminus \bbo$.

\medskip
Assume that $d\geq 2$.

Let $f = \ba$ and let $h_i(\ba,\bx) := \dd_i f (\bx)$.

Let us consider the following quasi-affine sub-variety of 
$H(d,n) \times (\Kn)^*$
$$
\Sgm := \{(\ba,\bx) \in U(d,n)^0 \times (\K^n)^* : h_1 = \ldots = h_n = 0\}.
$$

Let $(\ba,\bx) \in \Sgm$ such that $\bx \neq \bbo$. We can assume that $x_1
\neq 0$. Let $\aph^1,\ldots,\aph^n \in \N^n$ such that $|\aph^i| = d$ for 
$i=1,\ldots,n,$ and
$$
\aph_1^j = d\dlt_{1,j} \;\; {\rm while} \;\; 
\aph_i^j = (d-1)\dlt_{1,j} + \dlt_{i,j} \;\; {\rm for} \;\; j=1,\ldots,n,
\; i= 2,\ldots,n,
$$
where $\dlt_{k,l}$ is the Kronecker symbol. In particular we deduce
that 
$$
(\dd_{a_{\aph^1}} h_1) (\ba,\bx) = d \cdot x_1^{d-1} \;\; {\rm and} \;\; 
(\dd_{a_{\aph^i}} h_i) (\ba,\bx) = x_1^{d-1}
$$
and therefore 
$$
\wedge_{i=1}^n Dh_i (\ba,\bx) = d \cdot x_1^{d(d-1)} \cdot \wedge_{i=1}^n 
Da_{\aph^i} + \omg(\ba,\bx)
$$
where $\omg(\ba,\bx)$ is a $n$-form with null coefficient along 
$\wedge_{i=1}^n Da_{\aph^i}$. Therefore the $\K$-analytic subset 
germ $(\Sgm,(\ba,\bx))$ is non-singular and has codimension $n$.
Since $U(d,n)^0\times\bbo$ has $\K$-codimension $n$, we deduce 
that 
$$
{\rm codim}_\K \Sgm = n.
$$
Let $\pi_0 : \Sgm \to U(d,n)^0$ be the projection onto the second factor. 
Let 
$$
\pi_0^{-1}(\ba) =: \ba \times \Sgm_\ba. 
$$
Since $\Sgm_\ba \cup \bbo$ is a $\K$-cone of $\K^n$, and if $f =\ba \in 
U(d,n)^0$, then
$$
\crit(f) = \Sgm_\ba \cup \bbo \subset Z(f).
$$
Since $\pi_0$ is a regular mapping, the subset $K_0(\pi_0)$ of its critical 
values is $\K$-constructible in $U(d,n)^0$ of positive $\K$-codimension,
therefore its Zariski closure $K_0(\pi_0)^{zar}$ in $U(d,n)^0$ has also 
positive $\K$-codimension. Therefore $U(d,n) := U(d,n)^0 \setminus 
K_0(\pi_0)^{zar}$ is Zariski open and dense in $H(d,n)$.
\end{proof}
\begin{remark}\label{rmk:hyper-homog-gen}
The critical locus $\crit(f)$ of a polynomial $f \in A(d,n)$ for which 
$\ini(f) \in U(d,n)$ is necessarily compact. When $\K=\C$, such a 
polynomial $f$ admits at most finitely many critical points.
\end{remark}
A straightforward and useful consequence of Lemma \ref{lem:hyper-homog-gen} 
is the following 
\begin{corollary}\label{cor:hyper-homog-gen}
For each $f \in U(d,n)$, there exists a positive constant $C$ such that
$$
|grad (f)(\bx)| \geq C \, |\bx|^{d-1} \;\; \forall \bx \in \Kn
$$
where $grad(f) := \sum_{i=1}^n \dd_i f \dd_{x_i}$.
\end{corollary}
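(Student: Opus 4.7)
The plan is to exploit homogeneity together with a compactness argument on the unit sphere. Since $f\in U(d,n)\subset H(d,n)$ is homogeneous of degree $d$, each partial derivative $\dd_i f$ is a homogeneous polynomial of degree $d-1$, so the function $\bx \mapsto |\mathrm{grad}(f)(\bx)|$ is positively homogeneous of degree $d-1$ in $|\bx|$. More precisely, for any $\lbd \in \K^*$ and $\bx\in\Kn$, one has $|\mathrm{grad}(f)(\lbd\bx)| = |\lbd|^{d-1}\cdot|\mathrm{grad}(f)(\bx)|$, which is the same identity in both the real and complex settings (in the complex case because each $\dd_i f$ is a complex homogeneous polynomial, so multiplying by $\lbd\in\C$ contributes a factor $|\lbd|^{d-1}$ to $|\mathrm{grad}(f)|$).

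Next I would use the information provided by membership in $U(d,n)$. By Euler's identity, $\crit(f)\subset Z(f)$ for every $f\in H(d,n)$. Combined with the condition $Z(f)\cap \crit(f)\subset\{\bbo\}$ supplied by Lemma \ref{lem:hyper-homog-gen}, this forces $\crit(f)\subset\{\bbo\}$. Equivalently, $\mathrm{grad}(f)(\bx)\neq \bbo$ for every $\bx\neq \bbo$. Therefore the restriction of the continuous function $\bu\mapsto |\mathrm{grad}(f)(\bu)|$ to the unit sphere $\bS^{n_\K-1}$ of $\Kn$ (with its real Euclidean structure) is strictly positive, and by compactness attains a positive minimum, which we call $C$.

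Finally, for any $\bx\in\Kn\setminus\bbo$, writing $r=|\bx|>0$ and $\bu = \bx/r \in \bS^{n_\K-1}$, the homogeneity identity yields
$$
|\mathrm{grad}(f)(\bx)| \; = \; r^{d-1}\cdot|\mathrm{grad}(f)(\bu)| \; \geq \; C\cdot|\bx|^{d-1},
$$
and the inequality is trivially satisfied at $\bx=\bbo$ since then $\mathrm{grad}(f)(\bbo)=\bbo$ and $d-1\geq 0$ (with $|\bx|^{d-1}=0$ when $d\geq 2$ and both sides being finite when $d=1$, in which case $\mathrm{grad}(f)$ is constant and the estimate is immediate).

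There is no real obstacle here; the statement is essentially the standard \L{}ojasiewicz estimate for a homogeneous polynomial with an isolated critical point at the origin, and reduces to compactness once the critical condition from Lemma \ref{lem:hyper-homog-gen} is invoked. The only minor point worth double-checking is that the unit sphere one works with is the sphere in $\Rn$ (when $\K=\R$) or in $\R^{2n}$ (when $\K=\C$) under the identification of $\Cn$ with $\R^{2n}$, but this is exactly the setting in which $|\bx|$ is defined throughout the paper, so no adjustment is necessary.
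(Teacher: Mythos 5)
Your proof is correct and is exactly what the paper intends: the paper labels the corollary as ``a straightforward and useful consequence'' of Lemma \ref{lem:hyper-homog-gen} and supplies no proof at all, and the standard homogeneity-plus-compactness argument you give (Euler's identity forces $\crit(f)\subset\{\bbo\}$, the gradient norm is positively $(d-1)$-homogeneous, take its minimum on the compact unit sphere of $\R^{n_\K}$) is the intended route.
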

\begin{proposition}\label{prop:aff-hyper-gen}
There exists a Zariski open and dense subset $\Omg(d,n)$ 
of $A(d,n)$ such that if  
$f \in \Omg(d,n)$ then $f$ is irreducible, $\ini(f) \in U(d,n)$, 
the zero locus $Z(f)$ is either empty (and thus $\K =\R)$ or a non-singular
hypersurface of $\K^n$, whose projective closure coincides in $\bP^n$ with 
its Zariski projective closure $\bP Z(\wt{e}_n(f))$ and moreover it is
transverse to $\bH_\infty$.
\end{proposition}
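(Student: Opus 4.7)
I would define $\Omg(d,n)$ as the intersection of two Zariski open dense subsets of $A(d,n)$. The first is $W := \{f \in A(d,n) : \ini(f) \in U(d,n)\}$, the preimage of the set from Lemma \ref{lem:hyper-homog-gen} under the linear projection $A(d,n) = \bigoplus_{k=0}^d H(k,n) \to H(d,n)$; this is Zariski open and dense in $A(d,n)$ by Lemma \ref{lem:hyper-homog-gen}. On $W$ the irreducibility of $f$ is automatic, since a non-trivial factorization $f = g h$ with both factors non-constant would induce a non-trivial factorization $\ini(f) = \ini(g)\ini(h)$, contradicting the irreducibility of $\ini(f)$.

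For the transversality at infinity, set $F := \wt{e}_n(f)$. Formula \eqref{eq:lin-embed} gives $\dd_{x_i} F(\bu,0) = \dd_{x_i} \ini(f)(\bu)$ for every $\bu \in \Kn$ and $i=1,\ldots,n$. At any $[\bu:0] \in \bP Z(\ini(f))$ with $\bu\neq\bbo$, Lemma \ref{lem:hyper-homog-gen} yields $\nb \ini(f)(\bu) \neq \bbo$, so $\nb F(\bu,0) \neq \bbo$ and has non-zero $\bx$-component. Thus the projective hypersurface $\bP Z(F)$ is smooth at $[\bu:0]$ and its tangent hyperplane there is distinct from $\bH_\infty$, giving the transversality of $\bP Z(F)$ to $\bH_\infty$ along the entire intersection $\bP Z(F) \cap \bH_\infty = \bP Z(\ini(f))$.

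Non-singularity of $Z(f)$ in $\Kn$ follows from a standard incidence argument. Let
$$
\Sgm := \{(f,\bx) \in W \times \Kn : f(\bx) = 0,\ \nb f(\bx) = \bbo\}.
$$
For each fixed $\bx_0 \in \Kn$, the $n+1$ linear forms $f \mapsto f(\bx_0)$ and $f \mapsto \dd_i f(\bx_0)$ on $A(d,n)$ are linearly independent (by testing against affine polynomials), so the fibre of $\Sgm \to \Kn$ over $\bx_0$ has $\K$-codimension $n+1$ in $W$. Hence $\dim_\K \Sgm \leq \dim_\K W - 1$, and the projection image $\pi(\Sgm) \subset W$ is $\K$-constructible with Zariski closure $K^{zar}$ of positive $\K$-codimension in $W$, exactly as in the last step of the proof of Lemma \ref{lem:hyper-homog-gen}. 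Put $\Omg(d,n) := W \setminus K^{zar}$: on this Zariski open dense subset of $A(d,n)$, $Z(f) \cap \crit(f) = \emptyset$, so $Z(f)$ is either empty (only possible when $\K = \R$) or a non-singular affine hypersurface.

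It remains to identify the topological projective closure $P(f)$ of $Z(f)$ with the Zariski closure $\bP Z(\wt{e}_n(f)) = Z(f) \cup \bP Z(\ini(f))$. In the complex case this is the classical agreement of Euclidean and Zariski closures for irreducible complex algebraic subsets of $\bP^n$. The main obstacle is the real case, where one must check that every $[\bu:0] \in \bP Z(\ini(f))$ is a Euclidean limit of points of $Z(f)\subset\Kn$. The smoothness of $\bP Z(F)$ at $[\bu:0]$ and its transversality to $\bH_\infty$ established above let the implicit function theorem represent $\bP Z(F)$ near $[\bu:0]$ as a smooth real codimension-one submanifold cutting $\bH_\infty$ transversely; hence every neighbourhood of $[\bu:0]$ contains points of $\bP Z(F) \setminus \bH_\infty = Z(f)$, which yields the required accumulation and finishes the proof.
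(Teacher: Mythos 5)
Your proof is correct, and it takes a genuinely different route from the paper's. The paper's proof sets $\Omg(d,n) = V_1 \cap V_2$ with $V_1 := \wt{e}_n^{-1}(U(d,n+1)) \setminus A(d-1,n)$ and $V_2 := U(d,n) + A(d-1,n)$: it reuses Lemma~\ref{lem:hyper-homog-gen} in $n+1$ variables so that the projective hypersurface $\bP Z(\wt{e}_n(f))$ is automatically irreducible and non-singular (affine non-singularity then follows via Euler's identity), and it obtains transversality to $\bH_\infty$ from the asymptotic gradient estimate $\nb f(\bx) = \nb\ini(f)(\bx) + o(|\nb\ini(f)(\bx)|)$ together with Corollary~\ref{cor:hyper-homog-gen}. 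You instead stay entirely in $n$ variables: you get irreducibility directly from the factorization $\ini(f)=\ini(g)\ini(h)$ (the paper's own remark after Lemma~\ref{lem:hyper-homog-gen}), establish transversality by the cleaner pointwise computation $\dd_{x_i}\wt{e}_n(f)(\bu,0)=\dd_{x_i}\ini(f)(\bu)$, and prove non-singularity of $Z(f)$ by a fresh but elementary incidence argument over $W\times\Kn$ with fibres of codimension $n+1$ (the independence of the $n+1$ evaluation/derivative forms tested on affine polynomials is correct and suffices). Your approach spells out the real-case accumulation argument for the closure identification via the implicit function theorem, which the paper leaves implicit; the paper's approach buys brevity by recycling $U(d,n+1)$ at the cost of the asymptotic estimate. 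Both are sound; yours is arguably more self-contained.
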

\begin{proof}
Since $A(d,n)$ is $\K$-linearly isomorphic to $H(d,n+1)$, the subset 
$V_1:= \wt{e}_n^{-1}(U(d,n+1)) \setminus A(d-1,n)$ is Zariski open
and dense in $A(d,n)$. Observe 
that for any $f \in V_1$, we have
$$
Z(f) = \bP Z(\wt{e_n}(f)) \cap \K^n.
$$
In particular we deduce that $Z(f) \cap \crit(f)$ is empty by definition 
of $U(d,n+1)$. 
When $Z(f)$ is not compact we have
$$
\bP Z(\wt{e_n}(f))\cap \bH_\infty = \bP Z(\ini(f)).
$$

Let $V_2 := U(d,n) + A(d-1,n)$ which is Zarsiki open and dense in $A(d,n)$.  
By definition of $U(d,n)$, given any $f \in V_2$, if $Z(f)$ is not
compact, then $\bP Z(\ini(f))$ is a non-singular hypersurface of $\bH_\infty$.
 
Let $f \in V_1\cap V_2$. 
The projective closure of the $\K$-cone $Z(\ini(f))$ in $\bP^n$
has at most an isolated singularity at the origin $[\bbo:1]$, and is 
transverse to $\bH_\infty$.
Corollary \ref{cor:hyper-homog-gen} implies that whenever $|\bx| \gg 1$, we 
find 
$$
grad(f)(\bx) = grad (\ini(f))(\bx) + o(|grad (\ini(f))(\bx)|).
$$
and thus since $|grad (\ini(f))(\bx)| \geq C|\bx|^{d-1}$, we deduce that 
$\bP Z(\tilde{e}_n(f))$ is transverse to $\bH_\infty$.
Then we take $\Omg(d,n) := V_1\cap V_2$.
\end{proof}
\subsection{Set theoretic complete intersections}\label{subsec:CI}
$ $

Let $p$ be a positive integer such that $n-1 \geq p \geq 2$.

Let $\bd = (d_1,\ldots,d_p)$ be a $p$-tuple of integers such that
$d_1\geq d_2 \geq \ldots \geq d_p \geq 1$.
We define the following vector space
$$
\bH(\bd;n) := H(d_1,n) \times \ldots \times H(d_p,n).
$$
Given $\bbf = (f_1,\ldots,f_p) \in \bH(\bd,n)$, 
let $Z(\bbf)$ be the zero locus of the mapping $\bbf :\K^n\to\K^p$, and
let $\bP Z(\bbf)$ be the sub-variety of $\bP^{n-1}$, base of the $\K$-cone 
$Z(\bbf)$. The following torus is Zariski, open and dense in $\bH(\bd;n)$
$$
\bH(\bd;n)^* := \{\bbf \in \bH(\bd;n) \, : \, f_i \not\equiv 0, \; 
i=1,\ldots,p \}.
$$

\medskip
Define the following polynomial function over $H(d,n)\times \K^n$:
$$
P_d(\ba,\bx) := \sum_{|\aph|\leq d} a_\aph \bx^\aph.
$$
Observe that for all $\bx\in \K^n$ we find
$$
\dd_{a_\aph} P_d(\ba,\bx) = \bx^\aph.
$$
Consider the following sub-variety of the torus $\bH(\bd,n)^*\times (\Kn)^*$
$$
IC(\bd;n) := \{(\bbf,\bx) = (\ba^1,\ldots,\ba^p,\bx) \in \bH(\bd,n)^* 
\times (\K^n)^* \; : \; 
P_{d_1}(\ba^1,\bx) = \ldots = P_{d_p}(\ba^p,\bx) = 0\}.
$$
\begin{lemma}
1)The sub-variety $IC(\bd,n)$ of $\bH(\bd,n)^* \times (\Cn)^*$ is 
non-singular, equi-dimensional, and of $\K$-codimension $p$.

\smallskip\noindent
2) There exists a Zariski open dense subset $V(\bd;n)^0$ of $\bH(\bd;n)^*$ such 
that for each $\bbf \in V(\bd;n)^0$ the zero locus $Z(\bbf)$ is either 
reduced to $\bbo$ (thus $\K=\R$) or admits at most an isolated singular 
point at $\bbo$, and is of $\K$-codimension $p$.
\end{lemma}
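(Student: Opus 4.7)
The proof splits naturally into the two parts of the statement, and both rest on a Jacobian calculation analogous to the one in Lemma \ref{lem:hyper-homog-gen}.

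For part 1), I would apply the Jacobian criterion to $IC(\bd,n)$, which is cut out inside the smooth ambient variety $\bH(\bd,n)^*\times (\Kn)^*$ by the $p$ equations $P_{d_1}(\ba^1,\bx)=\cdots=P_{d_p}(\ba^p,\bx)=0$. Fix a point $(\bbf,\bx)$ in $IC(\bd,n)$ and choose some coordinate $x_j \neq 0$; for definiteness assume $j=1$. For each $i=1,\ldots,p$ set $\aph^i := (d_i,0,\ldots,0)\in \N^n$, so that $\dd_{a^i_{\aph^i}}P_{d_i}(\ba^i,\bx) = \bx^{\aph^i} = x_1^{d_i} \neq 0$. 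Since the $p$ distinguished coefficient variables $a^i_{\aph^i}$ lie in the $p$ distinct factors $H(d_i,n)$ of the product $\bH(\bd,n)$, the $p\times p$ submatrix of the full Jacobian indexed by rows $i=1,\ldots,p$ and these coefficient columns is diagonal with non-zero diagonal. Hence $DP_{d_1},\ldots,DP_{d_p}$ are linearly independent at $(\bbf,\bx)$, and $IC(\bd,n)$ is smooth and equi-dimensional of $\K$-codimension $p$.

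For part 2), the strategy is generic smoothness of the first projection
$$
\pi : IC(\bd,n) \to \bH(\bd,n)^*, \qquad (\bbf,\bx) \mapsto \bbf.
$$
A direct computation using the tangent spaces obtained in (1) shows that $(\bbf,\bx)$ is critical for $\pi$ if and only if the Jacobian matrix $D_\bx\bbf$ has rank strictly less than $p$, i.e.\ $\bx$ is a singular point of the set-theoretic intersection $Z(\bbf)\setminus \bbo$. Let $\wt{\Sgm}\subset IC(\bd,n)$ be the locus of such critical points; it is a constructible subset. In the complex case, the algebraic Sard theorem gives that $\pi(\wt{\Sgm})$ is contained in a proper Zariski closed subset of $\bH(\bd,n)^*$. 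In the real case, one argues by a dimension count: for a fixed $\bx\in (\Kn)^*$, the fiber of $\wt{\Sgm}$ above $\bx$ in the second projection is a proper subspace of the codimension-$p$ linear subspace $\{\bbf(\bx)=0\}\subset \bH(\bd,n)^*$, and varying $\bx\in (\Kn)^*$ only adds $n$ dimensions, so $\dim \wt{\Sgm} < \dim \bH(\bd,n)$. Consequently the Zariski closure of $\pi(\wt{\Sgm})$ in $\bH(\bd,n)^*$ has positive $\K$-codimension. Set $V(\bd;n)^0$ to be the complement of this Zariski closure in $\bH(\bd;n)^*$.

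For $\bbf \in V(\bd;n)^0$, the fiber $\pi^{-1}(\bbf) = \{\bbf\}\times (Z(\bbf)\setminus\bbo)$ avoids $\wt{\Sgm}$, so $Z(\bbf)\setminus \bbo$ is non-singular of $\K$-codimension $p$ in $\Kn$. Since $Z(\bbf)$ is a $\K$-cone (the $f_i$ are homogeneous), the only possible singular point is $\bbo$, giving the claim. The main obstacle I anticipate is the real case dimension count: one must ensure that the Zariski closure of a semi-algebraic set of critical values really has positive codimension in $\bH(\bd;n)^*$. This is handled by combining the smoothness of $IC(\bd,n)$ from (1) with the fact that, for each fixed $\bx \in (\Kn)^*$, the rank-drop locus on the affine slice $\{\bbf(\bx)=0\}$ is genuinely a proper algebraic condition, guaranteed by the fact that one can always perturb any given $D_\bx\bbf$ to rank $p$ within $\{\bbf(\bx)=0\}$.
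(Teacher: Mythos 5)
Your part 1) is correct and essentially the paper's computation: the paper also exhibits a nonvanishing $p$-fold wedge $\wedge_{i=1}^p DP_{d_i}$ by picking, for each $i$, a monomial exponent $\aph^i$ with $|\aph^i|=d_i$ and $\bx^{\aph^i}\neq 0$ and differentiating with respect to the corresponding coefficient $a^i_{\aph^i}$; your diagonal-Jacobian version with $\aph^i=(d_i,0,\dots,0)$ is the same thing. Your part 2) over $\C$ (algebraic Sard) also coincides with the paper, which simply asserts that the critical value locus $K_0(\pi(\bd;n))$ of the regular map $\pi(\bd;n)$ is constructible of positive $\K$-codimension, and the paper applies this uniformly for $\K=\R$ and $\K=\C$.

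However, your proposed replacement of Sard by a dimension count in the real case has a genuine gap. You bound the fiber of $\wt{\Sgm}$ over a fixed $\bx$ (via the second projection) by ``a proper subvariety of the codimension-$p$ linear slice $\{\bbf(\bx)=0\}$,'' i.e.\ dimension $\leq \dim\bH(\bd,n)^*-p-1$, and then add $n$ for the base, getting $\dim\wt{\Sgm}\leq \dim\bH(\bd,n)^*+n-p-1$. Since the standing hypothesis is $p\leq n-1$, this bound is $\geq \dim\bH(\bd,n)^*$, so it does \emph{not} imply $\dim\wt{\Sgm}<\dim\bH(\bd,n)^*$, let alone $\dim\pi(\wt{\Sgm})<\dim\bH(\bd,n)^*$. ``Merely proper'' is too weak: to push the dimension count through you would need the sharper bound that the rank-$(p-1)$ determinantal locus in the space of $p\times n$ matrices has codimension $n-p+1$, and that, for fixed $\bx$, the map $\bbf\mapsto D_\bx\bbf$ restricted to $\{\bbf(\bx)=0\}$ is transverse to that stratum, which would give fiber dimension $\leq\dim\bH(\bd,n)^*-p-(n-p+1)$ and hence $\dim\wt{\Sgm}\leq \dim\bH(\bd,n)^*-1$. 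Alternatively — and this is what the paper does — Sard's theorem holds equally well in the real semi-algebraic setting: the critical value set of a smooth (semi-algebraic) map is a semi-algebraic set of measure zero, hence of positive dimension deficit, and its Zariski closure therefore has positive codimension. You should use one of these two repairs rather than the ``proper subvariety'' estimate.
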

\begin{proof}
Let $(\bbf,\bx) \in IC(\bd,n)$, with $\bbf = (\ba^1,\ldots,\ba^p)$.
Since $\bx\neq \bbo$, for each $i=1,\ldots,p$, 
let $\aph^i \in \N^n$ such 
that 
$$
|\aph^i| = d_i \;\; {\rm and} \;\; \bx^{\aph^i}\neq 0 
$$
Observe that
$$
(\wedge_{i=1}^p D P_{d_i})(\ba,\bx) =  \bx^{\sum_{i=1}^p\aph^i} 
\wedge_{i=1}^p D_{\ba_{\aph^i}^i} + \omg(\bbf,\bx) \neq \bbo
$$
for $\omg$ a $p$-form whose coefficient along $\wedge_{i=1}^p Da_{\aph^i}^i$
is zero. Therefore $IC(\bd,n)$ is non-singular at $(\bbf,\bx)$, of 
$\K$-codimension $p$ at the point $(\bbf,\bx)$. 
Thus $Z(\bbf)$ is either reduced to $\bbo$ or is of codimension $p$, and
with at most a unique singular point, the origin $\bbo$.
The 
following mapping
$$
\pi(\bd;n): IC(\bd;n) \to \bH(\bd;n)^*
$$
is a regular mapping. Thus the locus $K_0(\pi(\bd;n))$ of its critical 
values is a $\K$-constructible subset of $\bH(\bd;n)^*$ of positive 
$\K$-codimension. Therefore its Zariski closure $K_0(\pi(\bd;n))^{zar}$
in $\bH(\bd;n)^*$ has also positive $\K$-codimension. We recall that  
$$
\pi(\bd,n)^{-1}(\bbf) = \bbf \times (Z(\bbf)\setminus \bbo). 
$$
Let 
$$
\Sigma := \pi(\bd,n)^{-1}(K_0(\pi(\bd,n)^{zar})).
$$
Since $\dim_\K IC(\bd,n) = \dim_\K \bH(\bd,n)^* + n-p$, the following 
subset   
$$
\pi(\bd,n) (IC(\bd,n)\setminus \Sigma) 
$$
is Zariski open and dense in $\bH(\bd,n)^*$.
\end{proof}
A first goal of this sub-section is to precise the previous result 
showing the following
\begin{proposition}\label{prop:CI-homog-gen}
There exists Zariski open and dense subset $V(\bd;n)$ of $\bH(\bd,n)^*$
such that if $\bbf \in V(\bd;n)$ then, the subset $Z(\bbf)\cap \crit(\bbf)$ 
is contained in $\{\bbo\}$. Moreover if $p\leq n-2$, the homogeneous ideal 
$(\bbf)$ generated by $f_1,\ldots,f_p$, is prime.
\end{proposition}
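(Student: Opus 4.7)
The plan is to reuse, then sharpen, the Zariski open dense subset $V(\bd,n)^0$ already produced by the preceding lemma. Indeed, a critical value of the projection $\pi(\bd,n):IC(\bd,n)\to \bH(\bd,n)^*$ is precisely, by the tangent-space analysis at $(\bbf,\bx)\in IC(\bd,n)$, a tuple $\bbf$ admitting some $\bx\neq\bbo$ with $f_1(\bx)=\cdots=f_p(\bx)=0$ and $\wedge_{i=1}^p D_\bx f_i = \bbo$. Consequently $V(\bd,n)^0$ already enforces $Z(\bbf)\cap\crit(\bbf)\subseteq\{\bbo\}$, covering the first assertion of the proposition. I would then set $V(\bd,n):=V(\bd,n)^0\cap V^{\mathrm{prime}}(\bd,n)$, where $V^{\mathrm{prime}}(\bd,n)$ is constructed below to secure primality.

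For the primality claim when $p\leq n-2$, I would proceed as follows. For $\bbf\in V(\bd,n)^0$, the projective scheme $\bP Z(\bbf)\subset \bP^{n-1}$ cut out by $(\bbf)$ is a smooth complete intersection of dimension $n-1-p\geq 1$, hence reduced; so $(\bbf)$ is radical and primality reduces to irreducibility of $\bP Z(\bbf)$. Over $\C$, iterated Bertini irreducibility applied to successive hypersurface sections of degrees $d_1,\ldots,d_p$ in $\bP^{n-1}$ gives irreducibility at each step provided the current dimension is at least $2$; since $n-1-p\geq 1$, this condition is met at the penultimate cut, and the conclusion carves out a Zariski open dense $V^{\mathrm{prime}}(\bd,n)$ on which $\bP Z(\bbf)$ is irreducible, whence $(\bbf)\subset \C[\bx]$ is prime. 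In the real case, the same argument applied to the complexification $Z(\bbf)_\C$ yields irreducibility of $(\bbf)\otimes_\R \C$, from which primality of $(\bbf)\subset \R[\bx]$ can be deduced after a further generic restriction ensuring the complex variety is defined over $\R$ with Zariski-dense real locus.

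The main obstacle lies in the real case of the primality argument: one must rule out Galois-conjugation phenomena by which a complex-irreducible complete intersection could correspond to a reducible real ideal. Handling this cleanly requires either a Galois-equivariant refinement of the Bertini step, or an additional genericity restriction forcing a smooth real point in $Z(\bbf)$; both approaches fit within the transversality framework already used for Lemma~\ref{lem:hyper-homog-gen}, but require some extra real-algebraic book-keeping. Once past this point, the remainder of the proof is routine combination of the preceding lemma with the Jacobian criterion and classical Bertini.
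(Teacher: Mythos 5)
Your first assertion is handled by the right idea: a critical point of $\pi(\bd,n)$ at $(\bbf,\bx)\in IC(\bd,n)$ is exactly a point where $\mathrm{rk}\,D_\bx\bbf<p$, so the complement of $K_0(\pi(\bd,n))^{zar}$ already forces $Z(\bbf)\cap\crit(\bbf)\subset\{\bbo\}$. Note, though, that you are really invoking the \emph{proof} of the preceding lemma and not its statement: ``smooth of codimension $p$ away from $\bbo$'' does not by itself imply full Jacobian rank (think of $Z(x^2)=\{x=0\}$). The paper re-derives this assertion more explicitly, splitting on whether $d_p\geq 2$ and using the submersion $\Psi(\bd,n)$ and transversality to the determinantal variety $\cD(p,n)$ of codimension $n+1$; that extra precision is what Corollary~\ref{cor:CI-homog-gen} later feeds on, so it is not redundant even though your shorter argument suffices for this proposition's first claim.

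For primality over $\C$ you take iterated Bertini; the paper instead notes $(\bbf)$ is reduced (Jacobian criterion) and that $Z(\bbf)\setminus\bbo$ is connected because $\bP Z(\bbf)$ is a positive-dimensional smooth projective complete intersection, whence a single component. Both routes are legitimate, though if you go with Bertini you should still verify that ``$f_i$ generic given $f_1,\dots,f_{i-1}$, for each $i$'' cuts out a Zariski open dense subset of the \emph{product} $\bH(\bd,n)^*$, which needs a constructibility/semicontinuity remark you have not supplied.

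The genuine gap is your treatment of $\K=\R$. You claim the ``main obstacle'' is a possible Galois-conjugation phenomenon making $(\bbf)\subset\R[\bx]$ reducible even though $(\bbf)\C[\bx]$ is prime, and you propose extra genericity conditions to rule it out. This has the implication backwards. If $f_1,\dots,f_p\in\R[\bx]$ and the ideal $(\bbf)\C[\bx]$ is prime, then $\C[\bx]/(\bbf)\C[\bx]\cong(\R[\bx]/(\bbf))\otimes_\R\C$ is a domain, and since $\R[\bx]/(\bbf)$ embeds in it, $\R[\bx]/(\bbf)$ is a domain, i.e.\ $(\bbf)$ is prime in $\R[\bx]$ with no further hypotheses. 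The Galois subtlety you have in mind runs the other way (a real-prime ideal such as $(x^2+y^2)$ can complexify to a non-prime ideal), and is irrelevant here. The paper simply takes $V(\bd,n,\R)\subset V(\bd,n,\C)\cap\bH(\bd,n,\R)$ and concludes; your ``further generic restriction ensuring the complex variety is defined over $\R$ with Zariski-dense real locus'' is unnecessary, and the ``Galois-equivariant refinement of Bertini'' you propose addresses a problem that does not exist.
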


We introduce a few more notations, objects, and present 
several intermediary results to obtain Proposition \ref{prop:CI-homog-gen}.
Let 
$$
C(p,n) := \bigcup_{1\leq i_1<\ldots<i_{n-p} \leq n} \{\bx \in \Kn : x_{i_1} 
=  \ldots = x_{i_{n-p}} = 0\}
$$
the union of all $p$-planes of coordinates, thus of dimension $p$. 
It is a $\K$-cone over the projective sub-variety $B(p,n) \subset
\bP^{n-1}$, which is equi-dimensional of dim $p-1$. 
By definition of $V(\bd,n)^0$, the projective sub-variety $P(\bbf)$ of 
$\bP^{n-1}$ is non-singular and equi-dimensional of codimension $p$.
Thus we deduce that
$$
V(\bd,n)^1 := V(\bd,n)^0 \setminus \{\bbf : \dim_\K Z(\bbf) \cap C(p,n) \geq 
1\}
$$
is Zariski open and dense in $\bH(\bd,n)$. 
The following subset is Zariski open and dense in $H(d,n)$
$$
U(d,n)^* := \{f = (a_\aph)_{|\aph|=d} \in U(d,n) \; : \; a_\aph \neq 0, 
\; \forall \aph\}.
$$
The next subset is Zariski open and dense in $\bH(\bd;n)$
$$
W(\bd,n)^* := \left(U(d_1,n)^* \times \ldots U(d_p,n)^* \right)
\cap V(\bd;n)^1.
$$
We further define 
$$
IC(\bd,n)^* := IC(\bd,n) \cap W(d,n)^* \times (\Kn)^*
$$
as well as the following mapping
$$
\Psi (\bd,n) :IC(\bd,n)^* \to \K^p \times (\cL_n^*)^p,\;\;
(\bbf,\bx) \mapsto (P(\bbf,\bx),D_\bx\bbf).
$$
where $\cL_n = H(1,n)$, the space of $\K$-linear forms over $\Kn$. 
We obtain the following 
\begin{lemma}\label{lem:psi-bdn-submers} 
Assume $d_p \geq 2$, then $\Psi (\bd,n)$ is a submersion.
\end{lemma}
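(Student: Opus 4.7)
Since $P(\bbf,\bx)=0$ identically on $IC(\bd,n)^*$, the image of $\Psi$ lies in $\{0\}\times (\cL_n^*)^p$, so showing $\Psi$ is a submersion amounts to proving that the derivative of its second component, restricted to $IC(\bd,n)^*$, surjects onto $\cL_n^p$ at every point. Writing $L_i^0 := D_\bx f_i$ and $M_i(\delta \bx):=D^2_\bx f_i(\cdot,\delta \bx)$, a tangent vector $(\delta \bbf,\delta \bx)\in T_{(\bbf,\bx)} IC(\bd,n)^*$ is characterized by the linearized incidence equations $\delta \ba^i(\bx)+L_i^0(\delta \bx)=0$ for $i=1,\ldots,p$, and
\[
D\Psi_2(\delta \bbf,\delta \bx) \;=\; \bigl(D_\bx \delta \ba^i + M_i(\delta \bx)\bigr)_{i=1}^p.
\]

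Step one reduces the analysis to a per-index linear map. Fix $i$ and $\bx\neq \bbo$ and consider
\[
\phi_i : H(d_i,n) \;\longrightarrow\; \K\times \cL_n,\qquad \ba \mapsto \bigl(\ba(\bx),D_\bx \ba\bigr).
\]
Euler's identity $(D_\bx \ba)(\bx) = d_i\cdot \ba(\bx)$ for $\ba\in H(d_i,n)$ confines $\mathrm{image}(\phi_i)$ to the codimension one Euler hyperplane $E_{d_i,\bx}:=\{(c,L)\in \K\times \cL_n : L(\bx)=d_i c\}$. To show equality, I would compute $\ker \phi_i$ explicitly: after a linear change of coordinates placing $\bx$ at $\be_1$ (legitimate since $\bx\neq \bbo$), $\ker \phi_i$ is cut out by the $n$ independent linear conditions $a_{d_i\bun_1} = 0$ and $a_{(d_i-1)\bun_1 + \bun_k} = 0$ for $k=2,\ldots,n$. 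Hence $\mathrm{codim}\,\ker \phi_i = n = \dim E_{d_i,\bx}$, so $\mathrm{image}(\phi_i) = E_{d_i,\bx}$.

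Step two assembles the global surjectivity. Given a target tuple $(L_i)_{i=1}^p \in \cL_n^p$, I would first pick $\delta \bx\in \Kn$ solving the $p$ scalar equations $L_i^0(\delta \bx) = -L_i(\bx)$; this is possible because the gradients $L_1^0,\ldots,L_p^0$ are linearly independent on the domain (a fact reflected in the notation $(\cL_n^*)^p$, and which holds on $W(\bd,n)^* \times (\Kn)^*$ by choice of that open set). Next, for each $i$, I would appeal to Step one: applying Euler's identity to each $\dd_k f_i$, homogeneous of degree $d_i-1$, yields the contracted Hessian identity $M_i(\delta \bx)(\bx) = (d_i-1)\,L_i^0(\delta \bx)$ — this is precisely where the hypothesis $d_p\geq 2$ becomes meaningful — and a quick check shows that the pair $(-L_i^0(\delta \bx),\,L_i-M_i(\delta \bx))$ satisfies the Euler constraint $L(\bx) = d_i c$ and therefore lies in $E_{d_i,\bx}$. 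Step one then furnishes $\delta \ba^i \in H(d_i,n)$ with $\phi_i(\delta \ba^i) = (-L_i^0(\delta \bx),\,L_i-M_i(\delta \bx))$, which by construction satisfies the linearized incidence condition and realizes $D\Psi_2(\delta \bbf,\delta \bx) = (L_i)_i$.

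The principal difficulty is the explicit kernel count in Step one, where one must verify that the $n$ monomials $\bx^{d_i\bun_1}$ and $\bx^{(d_i-1)\bun_1+\bun_k}$ (together with their appropriate first partial derivatives at $\bx$) generate an $n$-dimensional image in $\K\times \cL_n$; once this linear-algebraic lemma is in place, the remainder is bookkeeping that combines Euler's identity for each $f_i$ with Euler's identity for $\dd_k f_i$.
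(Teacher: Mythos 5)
Your proof is correct and takes a genuinely different route from the paper. The paper's proof attempts to exhibit a non-singular $(p+pn)\times(p+pn)$ Jacobian minor using only partials in the coefficient directions $a_{\aph^{i,j}}^i$ and $a_{\beta^i}^i$, but your Step one actually exposes why this cannot work on its own: since the image of $\phi_i:\ba\mapsto(\ba(\bx),D_\bx\ba)$ is the Euler hyperplane $E_{d_i,\bx}$ and not all of $\K\times\cL_n$, the restriction of $D\Psi$ to the $\bbf$-directions has image contained in $\prod_i E_{d_i,\bx}$, hence rank at most $pn$, never $p+pn$. (Equivalently: differentiating Euler's relation $\sum_k x_k\,\lbd(d_i,n)_k = d_i\,P_{d_i}$ in the $\ba$-directions produces a linear dependence among the $n+1$ rows associated to each $i$, so every $(n+1)\times(n+1)$ $\ba$-minor vanishes.) Your Step two correctly identifies that the missing $p$ dimensions must come from the $\delta\bx$-directions, and the contracted Hessian identity $D^2_\bx f_i(\bx,\delta\bx)=(d_i-1)L_i^0(\delta\bx)$ is exactly what makes the $\delta\bx$-shift compatible with the Euler constraint. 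The hypothesis $d_p\geq 2$ is then used honestly, and the kernel-count in Step one is airtight.

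One point needs to be stated more carefully. The solvability of the system $L_i^0(\delta\bx)=-L_i(\bx)$ is equivalent to the linear independence of $L_1^0,\ldots,L_p^0$, and this is genuinely load-bearing: by your own Step two, $D\Psi_2$ restricted to $T_{(\bbf,\bx)}IC$ satisfies $L_i(\bx)=-L_i^0(\delta\bx)$, so any linear relation $\sum c_iL_i^0=0$ forces $\sum c_iL_i(\bx)=0$ on the entire image, killing surjectivity. The parenthetical "reflected in the notation $(\cL_n^*)^p$" is misleading — that only records that each $L_i^0$ is nonzero, not that the $p$ of them are jointly independent. The second part of your justification is the correct one: $W(\bd,n)^*\subset V(\bd,n)^1\subset V(\bd,n)^0$, and $V(\bd,n)^0$ is constructed in the preceding (unnumbered) lemma as the complement of the Zariski closure of the critical values of the projection $\pi(\bd,n):IC(\bd,n)\to\bH(\bd,n)^*$. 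For $\bbf$ a regular value of $\pi(\bd,n)$, surjectivity of $D\pi$ restricted to $T_{(\bbf,\bx)}IC$ is precisely the statement that the linear system $L_i^0(\delta\bx)=-\delta\ba^i(\bx)$ is solvable for every $\delta\bbf$, i.e.\ that $L_1^0,\ldots,L_p^0$ span $\K^p$. You should spell this chain out rather than saying "by choice of that open set," both because it is the crux of the argument and because it shows the lemma is really a reformulation of the earlier regular-value construction rather than an independent estimate.
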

\begin{proof}
Consider the following mapping $\lbd(d,n) : H(d,n)^*\times (\Kn)^* 
\to \cL_n^*$ defined as
$$
(f,\bx) \mapsto D_\bx f :\bu \mapsto \sum_{i=1}^n \dd_i f(\bx) u_i.
$$

Let $(\bbf,\bx) \in IC(d,n)^*$. 
We can assume that 
$$
x_1\cdots x_p \neq 0.
$$
Since $\lbd(d_i,n)(\bbf,\bx) = (\dd_1f_i(\bx),\ldots,\dd_n f_i(\bx))$, we 
recall that 
$$
f_i (\bx) = \sum_{j=1}^p a_{d_i\bun_j}^i x_j^{d_i} + \sum_{k=p+1}^n
a_{(d_i-1)\bun_i + \bun_k}^i x_i^{d_i-1}x_k + \; {\rm  rest}.
$$
Let $E(p,n)$ be the set of pair $(i,j)$ with $i=1,\ldots,p$ and 
$j=1,\ldots,n$. Given $(i,j) \in E(p,n)$, let $\aph^{i,j} \in \N^n$ defined
as
$$
\aph^{i,j} = 
\left\{
\begin{array}{ccl}
d_i \bun_j & {\rm if} & j \leq p 
\\
(d_i-1)\bun_i + \bun_j  & {\rm if} & j \geq p + 1
\end{array}
\right.
$$
For $i,j = 1,\ldots,p$, and $k=1,\ldots,n$, we observe that
$$
\left(\dd_{a_{\aph^{i,j}}^i} \lbd(d_i,n)_k\right)(\bbf,\bx) = 
\dlt_{j,k}\cdot d_i \cdot x_j^{d_i-1},
$$
while for  $i= 1,\ldots,p$, $j = p+1,\ldots,n$, and $k=1,\ldots,n$,
we observe that
$$
\left(\dd_{a_{\aph^{i,j}}^i} \lbd(d_i,n)_k\right)(\bbf,\bx) = 
\dlt_{i,k}\cdot (d_i-1) \cdot x_i^{d_i-1}x_j + \dlt_{j,k} x_i^{d_i-1}.
$$
Thus we deduce the following identity holds true
$$
\left(\wedge_{(i,j)\in E(p,n)} \, D (\lbd(d_i,n)_j)\right)(\bbf,\bx) = 
(\Pi_{i=1}^p d_i^p) \cdot (\Pi_{i=1}^p x_i^{n(d_i-1)} )
\wedge_{(i,j)\in E(p,n)}  Da_{\aph^{i,j}}^i + \omg_\aph (\bbf,\bx)
\neq \bbo
$$
where $\omg_\aph$ is a $p\times n$-form with null component along 
$\wedge_{(i,j)\in E(p,n)} Da_{\aph^{i,j}}^i$.
\\
For $i=1,\ldots,p$, let $\beta^i\in \N^n$ be such that
$$
\beta^i = (d_i -1)\bun_1 + \bun_2. 
$$
Since $p\geq 2$, we get
$$
(\wedge_{i=1}^p D P_{d_i})(\bbf,\bx) = x_1^{\sum_{i=1}^p (d_i-1)}
x_2^p \wedge_{i=1}^p Da_{\beta^i}^i + \omg_\beta(\bbf,\bx) \neq \bbo
$$
where $\omg_\beta$ is a $p$-form with null component along $\wedge_{i=1}^p 
Da_{\beta^i}^i$.
\\
Since $d_i\geq 2$, we check that $\beta^i \neq \aph^{i,j}$ for each 
$j=1,\ldots,n$. The lemma is proved.
\end{proof}

\medskip\noindent
We can now go into the proof of Proposition \ref{prop:CI-homog-gen}.
\begin{proof}[Proof of Proposition \ref{prop:CI-homog-gen}]
We will distinguish three cases.

\medskip\noindent
$\bullet$ {\bf Case 1:} \em Assume $d_1 = \ldots = d_p =1$. \em In this case,
since the mapping $\bbf :\Kn \to \Kp$ is $\K$-linear, the open subset 
$V(\bd,n)$ is only the Zariski open subset of such mappings of rank $p$. 

\medskip
We recall that the determinantal sub-variety of $\cL_n^p$
$$
D(p,n) : = \{(\lbd_1,\ldots,\lbd_p) \in \cL_n^p \, : \, \wedge_{i=1}^p \lbd_i 
= \bbo  \}
$$
is a $\K$-cone of $\K$-dimension $(p-1)(n+1)$ (see \cite{BrVe}), that is of 
$\K$-codimension $n+1 - p$. Therefore $$
\cD(p,n) :=\bbo\times D(p,n) \subset \K^p\times \cL_n^p
$$
has codimension $n+1$.  

\medskip\noindent
$\bullet$ {\bf Case 2:} \em Assume that $d_p \geq 2$. \em 
Denoting $\cD(p,n)^* = \cD(p,n) \cap (\cL_n^*)^p$, we get
$$
\Sgm:= \Psi^{-1}(\bbo\times (\cL_n^*)^p) \cap \cD(p,n)^* = 
\{(\bbf,\bx) \in IC(\bd,n)^* : {\rm rk}\,D_\bx \bbf \leq p-1\}.
$$
Since the sub-variety $\cD(p,n)^*$ is a finite union of non-singular constructible sub-manifolds $S_1,\ldots,S_r$, Lemma \ref{lem:psi-bdn-submers} implies that 
$\Psi$ is transverse to each $S_k$.  
Thus $\Sigma$ has codimension $n+1$. Therefore the projection of 
$IC(\bd,n)^*\setminus
\Sigma$ onto $\bH(d,n)^*$ is Zariski open, since the projection of 
$IC(\bd,n)^*$ is $V(\bd,n)^1$.  

\medskip\noindent
$\bullet$ {\bf Case 3:} \em Assume $d_q >d_{q+1} = \ldots = d_p =1$ for 
some index $q \in \{1,\ldots,p-1\}$. \em Thus given $\bbf \in \bH(\bd,n)^*$,
the mapping $\bbf':=(f_{q+1},\ldots,f_p):\Kn\to \K^{p-q}$ is just a 
$\K$-linear mapping. The following subset 
$$
V(\bd,n)^2 := \{\bbf \in  V(\bd,n)^1 : {\rm rk}\,\bbf' = p-q\}
$$
is Zariski open. Let
$$
IC(\bd,n)^2 := IC(\bd,n)^*\cap V(\bd,n)^2 \times (\Kn)^*.
$$
Let us denote again $\lbd(d_i,n)(\bbf,\bx)$ for $D_\bx f_i \in \cL_n$.
Thus if $\bbf \in V(\bd,n)^2$, we find 
$$
(\wedge_{i=q+1}^p\wedge_{j=1}^n D \lbd(d_i,n)_j)(\bbf,\bx) \neq \bbo.
$$
For $i=1,\ldots,q$, let $\aph^{i,j} \in \N^n$ as in the proof of Lemma 
\ref{lem:psi-bdn-submers}. For $i = q+1,\ldots,p$, define 
$$
\aph^{i,j} = \bun_j
$$
And we check that
$$
\left(\wedge_{(i,j)\in E(p,n)} \, D (\lbd(d_i,n)_j)\right)(\bbf,\bx) = 
(\Pi_{i=1}^p d_i^p) \cdot (\Pi_{i=1}^p x_i^{n(d_i-1)} )
\wedge_{(i,j)\in E(p,n)}  Da_{\aph^{i,j}}^i + \omg_\aph (\bbf,\bx)
\neq \bbo
$$
where $\omg_\aph$ is a $p\times n$-form with null component along 
$\wedge_{(i,j)\in E(p,n)} Da_{\aph^{i,j}}^i$.
For $i=1,\ldots,q$, let $\beta^i$ as in the proof of Lemma 
\ref{lem:psi-bdn-submers}, so that
$$
(\wedge_{i=1}^q D P_{d_i})(\bbf,\bx) = x_1^{\sum_{i=1}^q (d_i-1)}
x_2^q \wedge_{i=1}^q Da_{\beta^i}^i + \omg_\beta(\bbf,\bx) \neq \bbo
$$
where $\omg_\beta$ is a $q$-form with null component along $\wedge_{i=1}^q 
Da_{\beta^i}^i$.
\\
Since $\bbf'$ has rank $p-q$, we know that $\wedge_{i=q+1}^p D_\bx f_i \neq 
\bbo$, thus we deduce that $\Psi$ is submersive at each point of 
$IC(\bd,n)^2$. We conclude as in Case 2 working with $IC(\bd,n)^2$.

\medskip
Let $(\bbf)$ be the ideal generated by $(f_1,\ldots,f_p)$ in $\K[\bx]$,
with $p\leq n-2$.

Assume first that $\K = \C$. Since $Z(\bbf)$ is non-singular outside the 
origin, of $\K$-codimension $p$ and $Z(\bbf)\cap \crit(\bbf)$ is contained
in $\{\bbo\}$, the ideal $(\bbf)$ is reduced. Since $Z(\bbf) \setminus 
\bbo$ is connected, and each of the $f_i$ is irreducible, the ideal
$(\bbf)$ is necessarily prime.

\smallskip
Assume that $\K=\R$.
Let $H(d,n,\K)$ be the $\K$-homogeneous polynomials of degree $d$
over $\Kn$. Since $\R$ is the real part of $\C$,
the space $H(d,n,\R)$ is the real part of $H(d,n,\C)$:
$$
H(d,n,\R) := \{(a_\aph)_{|\aph| =d} \in H(d,n,\C) : a_\aph \in \R, \; 
\forall \aph\}.
$$
If $\cU$ is a non-empty Zariski open subset of $H(d,n,\C)$, the subset 
$$
\cU \cap H(d,n,\R)
$$
is Zariski open in $H(d,n,\R)$ and not empty. Therefore we can assume
that $V(\bd;n,\R)$ is contained $V(\bd;n,\C) \cap \bH(\bd,n,\R)$,
and thus for $\bbf \in V(\bd;n,\R)$ the ideal $(\bbf)$ is prime
in $\C[\bx]$, thus prime in $\R[\bx]$. 
\end{proof}
The following 
consequence of the proof of Proposition \ref{prop:CI-homog-gen} is 
straightforward
\begin{corollary}\label{cor:CI-homog-gen}
Let $\bbf \in V(\bd;n)$ and such that $\bP Z(\bbf)\subset \bH_\infty$ is not 
empty. At every $[\bx:0] \in \bP Z(\bbf)$ the following holds true 
$$
(\wedge_{i=1}^p D_\bx f_i) \wedge Dz \neq \bbo.
$$
In particular $Z(\bbf)\cup \bP Z(\bbf)$, the Zariski closure of $Z(\bbf)$ in 
$\bP^n$, is transverse to $\bH_\infty$.
\end{corollary}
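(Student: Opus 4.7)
The plan is to unpack the wedge product condition into a statement about the covectors $D_\bx f_i$ on $\Kn$ alone, and then invoke Proposition \ref{prop:CI-homog-gen} directly. Since each $f_i \in H(d_i,n)$ is a homogeneous polynomial in $(x_1,\ldots,x_n)$ only and does not involve the homogenizing coordinate $z$, regarded as a polynomial on $\K^{n+1}$ its differential at $(\bx,0)$ reads $D_{(\bx,0)} f_i = \sum_{j=1}^n \dd_j f_i(\bx)\, dx_j$, with zero component along $dz$. Consequently
$$
\left(\wedge_{i=1}^p D_{(\bx,0)} f_i\right) \wedge Dz
$$
is non-zero in $\wedge^{p+1}(\K^{n+1})^*$ if and only if $\wedge_{i=1}^p D_\bx f_i$ is non-zero in $\wedge^p(\Kn)^*$.

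Now the hypothesis $[\bx:0]\in\bP Z(\bbf)$ means $\bx\in Z(\bbf)\setminus\bbo$, and $\bbf\in V(\bd;n)$ gives, by Proposition \ref{prop:CI-homog-gen}, that $Z(\bbf)\cap \crit(\bbf)\subset\{\bbo\}$, hence $\bx\notin \crit(\bbf)$ and $\wedge_{i=1}^p D_\bx f_i \neq \bbo$. Combined with the observation above, this establishes the claimed wedge inequality.

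For the concluding transversality assertion, since each $f_i$ is already homogeneous of degree $d_i$ in $\bx$, its $z$-homogenization is $f_i$ itself viewed as a homogeneous polynomial on $\K^{n+1}$. The Zariski closure of $Z(\bbf)$ in $\bP^n$ is therefore the projective variety cut out by $(f_1,\ldots,f_p)$, which set-theoretically equals $Z(\bbf)\cup \bP Z(\bbf)$, and meets $\bH_\infty=\{z=0\}$ along $\bP Z(\bbf)$. Transversality to $\bH_\infty$ at a point $[\bx:0]$ is precisely the linear independence of $D_{(\bx,0)} f_1,\ldots,D_{(\bx,0)} f_p, Dz$ in $(\K^{n+1})^*$, equivalent to the non-vanishing of the wedge just established. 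No genuine obstacle appears in this argument: everything hinges on the elementary fact that $f_i$ is $z$-independent, which decouples the transversality-at-infinity condition from the generic non-singularity already secured on $\Kn\setminus\bbo$ by Proposition \ref{prop:CI-homog-gen}.
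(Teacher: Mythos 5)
Your proof is correct and follows the route the paper evidently intends: the paper gives no explicit argument (calling the corollary a straightforward consequence of Proposition \ref{prop:CI-homog-gen}), and your derivation — that $z$-independence of the homogeneous $f_i$ reduces the wedge condition to $\wedge_{i=1}^p D_\bx f_i \neq \bbo$, which holds on $Z(\bbf)\setminus\bbo$ by Proposition \ref{prop:CI-homog-gen} — is exactly the natural one. The only point worth tightening is the parenthetical identification of $Z(\bbf)\cup\bP Z(\bbf)$ with the Zariski closure: one should note that every $[\bx:0]\in\bP Z(\bbf)$ is the limit of $[t\bx:1]\in Z(\bbf)$ as $t\to\infty$, so the closure is not a proper subset of the variety cut out by $(f_1,\ldots,f_p)$.
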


\bigskip
We define the following vector space
$$
\bA(\bd,n) := A(d_1,n)\times\ldots,\times A(d_p,n).
$$
As for the case $p=1$, using the homogenization, we obtain again a
$\K$-linear isomorphism
$$
\wt{\be}_n : \bA(\bd,n) \to \bH(\bd,n+1),\;\;
\bbf = (f_1,\ldots,f_p) \; \mapsto \; \wt{\be}_n(\bbf) := ( \wt{e}_n(f_1),\ldots, 
\wt{e}_n(f_p)).
$$
For $\bbf = (f_1,\ldots,f_p) \in \bA(\bd,n)$, we also define
$$
\ini(\bbf) := (\ini(f_1),\ldots,\ini(f_p)).
$$
The next result is the analogue of Proposition \ref{prop:aff-hyper-gen}
and admits a similar proof.
\begin{proposition}\label{prop:aff-CI-gen}
1) There exists a Zariski dense and open subset $\Omg(\bd,n)$ of $\bA(\bd,n)$ 
such 
that if  $\bbf \in \Omg(\bd,n)$ then $\ini(\bbf) \in V(\bd,n)$, the zero 
locus $Z(\bbf)$ is either empty (and 
thus $\K =\R)$ or a non-singular sub-variety of $\Kn$, whose projective 
closure coincides in $\bP^n$ with its Zariski projective closure 
$\bP Z(\wt{\be}_n(\bbf))$ and moreover it is transverse to $\bH_\infty$.

\smallskip\noindent
2) If $p\leq n-2$, the ideal $(\bbf)$ is prime.
\end{proposition}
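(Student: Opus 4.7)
The plan is to parallel the proof of Proposition \ref{prop:aff-hyper-gen}, replacing its inputs from Lemma \ref{lem:hyper-homog-gen} and its corollary by Proposition \ref{prop:CI-homog-gen} and Corollary \ref{cor:CI-homog-gen}. I would set $\Omg(\bd,n) := W_1 \cap W_2$ with
$$
W_1 := \{\bbf \in \bA(\bd,n) : \ini(\bbf) \in V(\bd,n)\}, \qquad W_2 := \wt{\be}_n^{-1}(V(\bd,n+1)).
$$
Both are Zariski open and dense in $\bA(\bd,n)$: $W_1$ because the initial-form map is a $\K$-linear surjection $\bA(\bd,n) \to \bH(\bd,n)$ and $V(\bd,n)$ is Zariski open dense by Proposition \ref{prop:CI-homog-gen}; $W_2$ because $\wt{\be}_n$ is a $\K$-linear isomorphism $\bA(\bd,n) \to \bH(\bd,n+1)$ and $V(\bd,n+1)$ is Zariski open dense in the target.

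For $\bbf \in \Omg(\bd,n)$, the identities $\wt{\be}_n(f_i)(\bx,1) = f_i(\bx)$ and $\wt{\be}_n(f_i)(\bx,0) = \ini(f_i)(\bx)$ identify $Z(\bbf)$ with the affine chart $\{z \neq 0\}$ of the projective variety $\bP Z(\wt{\be}_n(\bbf)) \subset \bP^n$ and give $\bP Z(\wt{\be}_n(\bbf)) \cap \bH_\infty = \bP Z(\ini(\bbf))$. The condition $\wt{\be}_n(\bbf) \in V(\bd,n+1)$ forces $Z(\wt{\be}_n(\bbf)) \cap \crit(\wt{\be}_n(\bbf)) \subset \{\bbo\} \subset \K^{n+1}$, so $\bP Z(\wt{\be}_n(\bbf))$ is a non-singular projective complete intersection of codimension $p$ in $\bP^n$, equal to the Zariski projective closure of $Z(\bbf)$, and $Z(\bbf)$ is non-singular (or empty, only when $\K=\R$).

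The main computation is transversality to $\bH_\infty$. At $(\bx,0)$ with $[\bx:0] \in \bP Z(\ini(\bbf))$, the differential of $\wt{\be}_n(f_i)$ on $\K^{n+1}$ decomposes as $D_{(\bx,0)} \wt{\be}_n(f_i) = D_\bx \ini(f_i) + f_{i,d_i-1}(\bx)\, Dz$, so since $Dz \wedge Dz = 0$,
$$
(\wedge_{i=1}^p D_{(\bx,0)} \wt{\be}_n(f_i)) \wedge Dz \;=\; (\wedge_{i=1}^p D_\bx \ini(f_i)) \wedge Dz.
$$
Since $\ini(\bbf) \in V(\bd,n)$, Corollary \ref{cor:CI-homog-gen} states exactly that the right-hand side is non-zero at every $[\bx:0] \in \bP Z(\ini(\bbf))$; this yields the sought transversality of $\bP Z(\wt{\be}_n(\bbf))$ to $\bH_\infty$, and Lemma \ref{lem:transverse} completes part (1).

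For part (2), assume $p \leq n-2$. In the complex case, $\bP Z(\wt{\be}_n(\bbf))$ is a non-singular projective complete intersection of dimension $n-p \geq 2$, hence irreducible by classical Bertini-type connectedness; removing the codimension-one section $\bP Z(\ini(\bbf))$ leaves $Z(\bbf)$ irreducible. The Jacobian criterion together with non-singularity yields reducedness of the ideal $(\bbf)$, and irreducibility of $Z(\bbf)$ then promotes this to primeness. The real case descends from $\C$ by the real-restriction argument from the end of the proof of Proposition \ref{prop:CI-homog-gen}. The single technical obstacle is the wedge identity above, which however reduces the affine transversality directly to the homogeneous case handled by Corollary \ref{cor:CI-homog-gen}; everything else is a routine recombination of results already established.
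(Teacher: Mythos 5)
Your proposal is correct and follows the same overall architecture as the paper's proof (define $\Omg(\bd,n)$ as the intersection of two open conditions — one on $\ini(\bbf)$, one on $\wt{\be}_n(\bbf)$ — and feed the homogeneous results of Proposition \ref{prop:CI-homog-gen} into Lemma \ref{lem:transverse}), but you handle two of the steps by genuinely different arguments. For transversality at $\bH_\infty$, the paper proves an asymptotic lower bound $|\wedge_{i=1}^p D_\bx \ini(f_i)|\geq C|\bx|^{D-p}$ for $|\bx|\gg 1$ via Corollary \ref{cor:CI-homog-gen} and then perturbs with $\wedge_{i=1}^p D_\bx f_i = \wedge_{i=1}^p D_\bx \ini(f_i) + o(|\bx|^{D-p})$; you instead compute the differential at $z=0$ exactly, getting the pointwise algebraic identity $(\wedge_{i=1}^p D_{(\bx,0)}\wt{\be}_n(f_i))\wedge Dz = (\wedge_{i=1}^p D_\bx \ini(f_i))\wedge Dz$ and invoking Corollary \ref{cor:CI-homog-gen} directly — this is cleaner and sidesteps the estimate. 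For part (2), the paper dispatches primeness in one line via the classical principle that a prime initial ideal $(\ini(\bbf))$ forces $(\bbf)$ prime; you instead argue geometrically that $\bP Z(\wt{\be}_n(\bbf))$ is a non-singular projective complete intersection of dimension $\geq 2$, hence irreducible, hence so is the dense open affine part $Z(\bbf)$, and combine with reducedness from the Jacobian criterion. Both are valid; your geometric route is a bit more verbose but arguably more transparent, whereas the paper's initial-ideal argument is terser but implicitly relies on knowing that the initial ideal of $(\bbf)$ is exactly $(\ini(\bbf))$, which holds because $\ini(\bbf)$ is generically a complete intersection. One small economy you could have used for part (2): since $\wt{\be}_n(\bbf)\in V(\bd,n+1)$ and $p\leq n-2 < (n+1)-2$, Proposition \ref{prop:CI-homog-gen} applied in ambient dimension $n+1$ already gives primeness of the homogeneous ideal $(\wt{\be}_n(\bbf))$ directly, avoiding the Bertini-connectedness citation.
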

%
%
%
%
\begin{proof}
Since $\bA(\bd,n)$ is $\K$-linearly isomorphic to $\bH(\bd,n+1)$, the 
following subset
$$
\cV_1 := \wt{\be}_{n+1}^{-1}(V(\bd,n+1)) \cap \{\bbf \in \bA(\bd,n) : 
\ini(\bbf)\in \bH(\bd,n)^*\}
$$
is Zariski open and dense in $\bA(\bd,n)$. For any $\bbf \in \cV_1$, we 
find
$$
Z(\bbf) = \bP Z(\wt{\be}_{n+1}(\bbf)) \cap \K^n.
$$
In particular we deduce that $Z(\bbf) \cap \crit(\bbf)$ is empty by 
definition of $V(\bd,n+1)$. 
When $Z(\bbf)$ is not compact we have
$$
\bP Z(\wt{\be}_{n+1}(\bbf)) \cap \bH_\infty = \bP Z(\ini(\bbf)).
$$
The following subset 
$$
\cV_2 := V(\bd,n) + \{\bbf \in A(\bd,n) : \exists \; i \in \{1,\ldots,n\} \;
: \; \deg f_i < d_i\}
$$
is Zariski open and dense in $\bA(\bd,n)$. By definition of $V(\bd,n)$, 
given any mapping $\bbf \in \cV_2$, if $Z(\bbf)$ is not
compact, then $\bP Z(\ini(\bbf))$ is a non-singular set theoretic complete 
intersection  of $\bH_\infty$.
 
Let $\bbf \in \cV_1\cap \cV_2$.  
The projective closure of the $\K$-cone $Z(\ini(\bbf))$ in $\bP^n$
has at most an isolated singularity at the origin $[\bbo:1]$, and is 
transverse to $\bH_\infty$.

Corollary \ref{cor:CI-homog-gen} implies that whenever $|\bx| \gg 1$, 
and $\bx \in Z(\bbf)$ we find
$$
|\wedge_{i=1}^p D_\bx \ini(f_i)|\;\geq \; C|\bx|^{D-p}
$$
where $D = \sum_{i=1}^p d_i$ and $C$ is a positive constant.
Since for $|\bx| \gg 1$, the following estimate holds true
$$
\wedge_{i=1}^p D_\bx f_i = \wedge_{i=1}^p D_\bx \ini(f_i) + 
o(|\bx|^{D-p}),
$$
we thus deduce that $\bP Z(\wt{\be}_n(\bbf))$ is transverse to $\bH_\infty$.
Then we take $\Omg(\bd,n) := \cV_1\cap \cV_2$.

\medskip\noindent
If $p\leq n-2$, the ideal $(\ini(\bbf))$ is prime, thus so is the 
ideal $(\bbf)$.
\end{proof}
%
%
%
%
%
%
%
%
%
%
%
%
%
%
%
%
%
%
%
%
%
%
%
%
%
%
%
%
%
%
%
%
%
%
%
%
%
%
%
%
%
%
%
%
%
%
%
%
%
%
%
%
%
%
%
%
%
%

%
%
%
%
%
%
%
%
%
%
%
%
%
%
%
%
%
%
%
%
%
%
%
%
%
%
%
%
%
%
%
%
%
%
%
%
%
%
%
%

\end{document}